\definecolor{red-}{rgb}{1.0,0.0,0.0}
\definecolor{green-}{rgb}{0.0,0.4,0.0}
\definecolor{blue-}{rgb}{0.0,0.0,1.0}
\newtcolorbox{tbox}[1][]{%
    breakable,
    enhanced,
    colframe=blue,
    coltitle=white,
    #1
}
\theoremstyle{theorem}
\newtheorem{theorem}{Theorem}[section]
\newtheorem{corollary}[theorem]{Corollary}
\newtheorem{lemma}[theorem]{Lemma}
\newtheorem{proposition}[theorem]{Proposition}
\theoremstyle{definition}
\newtheorem{remark}[theorem]{Remark}
\newtheorem{prog}{Program}
\newtheorem{definition}[theorem]{Definition}
\newtheorem{example}[theorem]{Example}
\begin{document}

\title[Resultants of skew polynomials]{Resultants of skew polynomials over division rings}

\author[A.E. Almendras, J.A. Briones, A.L. Tironi]{Alexis Eduardo Almendras Valdebenito, Jonathan Armando Briones Donoso and Andrea Luigi Tironi}

\date{\today}

\address{
Universidad de Concepci\'on,
Facultad de Ciencias F\'isicas y Matem\'aticas,
Departamento de Matem\'atica,
Casilla 160-C,
Concepci\'on, Chile}
\email{atironi@udec.cl}
\email{jonathanbriones@udec.cl}

\address{
Universidad de Concepci\'on,
Escuela de Educaci\'on,
Departamento de Ciencias B\'asicas,
Los \'Angeles, Chile}
\email{alexisalmendras@udec.cl}

\subjclass[2020]{Primary: 12E15, 12Y05, 16S36; Secondary: 12E10, 15A33. Key words and phrases: Resultant, Ore extension, Division ring, Algorithms}


\begin{abstract}
Let $\mathbb{F}$ be a division ring. We extent some of the main well-known results about the resultant of two univariate polynomials to the more general context of an Ore extension $\mathbb{F}[x;\sigma,\delta]$. Finally, some algorithms and Magma programs are given as a numerical application of the main theoretical results of this paper.
\end{abstract}

\maketitle

\section{Introduction}\label{intro}

In commutative algebra, the notion of resultant (or eliminant) of two univariate polynomials defined over a field is well-known and classical and many results about it can be found in literature (e.g, see \cite{ModernAlg}, \cite{resultants} or \cite{Alglang}). The classical resultant of two polynomials is in fact a polynomial expression of their coefficients, which is equal to zero if and only if the polynomials have a common root, possibly in a field extension, or equivalently, a common factor over their field of coefficients. The resultant is widely used in number theory, algebraic geometry, symbolic integration, computer algebra, and it is a built-in function of most computer algebra systems. The resultant of two univariate polynomials over a field, or a commutative ring, is commonly defined as the determinant of their Sylvester matrix. More precisely, let
$p(x)=p_rx^r+\dots+p_1x+p_0$ and $q(x)=q_sx^s+\dots+q_1x+q_0$ be two non-zero polynomials with $p_r\neq 0,q_s\neq 0$. The map $\varphi : \mathcal{P}_s\times\mathcal{P}_r\to\mathcal{P}_{r+s}$ given by $\varphi (a,b)=ap+bq$ is a linear map between two vector spaces of the same dimension, where $\mathcal{P}_i$ is the vector space of dimension $i$ whose elements are the polynomials of degree less than $i$. Over the basis of the powers of the variable $x$, the above map $\varphi$ is represented by a square matrix of dimension $r+s$, which is called the Sylvester matrix of $p$ and $q$.  

Let $\mathbb{F}[x;\sigma,\delta]$ be an Ore extension (Definition \ref{ringskew}), where $\mathbb{F}$ is a division ring, $\sigma :\mathbb{F}\to\mathbb{F}$ is a ring homomorphism and $\delta :\mathbb{F}\to\mathbb{F}$ is a $\sigma$-derivation (Definition \ref{def1}). Inspired by \cite{ResEric}, the main purpose of this paper is to extend in $\mathbb{F}[x;\sigma,\delta]$ all the above results and the well-known criteria equivalent to the condition that the resultant of two univariate skew polynomials is equal to zero. Finally, along this paper, we give also some algorithms of the main algebraic computations and some of their respective Magma programs \cite{Magma}. 

\smallskip

The paper is organized as follows. In Section 1, we recall some basic definitions and notation, the main properties of $\mathbb{F}[x;\sigma,\delta]$ and we give some preliminary results. In Section 2, after some technical lemmas, we first introduce the right $(\sigma,\delta)$-resultant $R_{\mathbb{F}}^{\sigma,\delta}(f,g)$ of two skew polynomials $f,g\in\mathbb{F}[x;\sigma,\delta]$ (Definition \ref{Res1.3}) and, after some of its properties (see Propositions \ref{basicpropert} and \ref{pro3.19}), we then prove three results about some equivalent conditions which give $R_{\mathbb{F}}^{\sigma,\delta}(f,g)=0$ (see Theorems \ref{Res1.5}, \ref{Res1.15} and Proposition \ref{Res1.18}) and a characterization of the degree of the greatest common right divisor $gcrd(f,g)$ (see Theorem \ref{teor 3.7}), which can be also applied to check when $gcrd(f,g)\neq 1$ . Moreover, we introduce the notion of the left $(\sigma,\delta)$-resultant of two skew polynomials (Definition \ref{leftresultant}), rewriting in this context some of the main previous results, and we give some equivalent conditions to the fact that a skew polynomial admits a right or left root of positive multiplicity (see Theorems \ref{Res2.2} and \ref{thm-multiplicity}). Finally, through the paper, we give some algorithms and their respective Magma programs as computational applications of the main algebraic results which allowed us to construct all the examples of this paper in a very simple manner.

%
%
%


\section{Background material and preliminary results}\label{S.1.1}

Denote by $\mathbb{F}$ a division ring (or a skew field), that is, a unitary ring (not necessarily commutative) in which every non-zero element is invertible in $\mathbb{F}$. Evidently, every field is a division ring. The most familiar
example of a division ring which is not a field is the ring $\mathbb{H}$ of Hamilton's
quaternions. However, also there are interesting methods for constructing non-commutative division rings (e.g., if $R$ is a ring and $S$ is a simple module over $R$, then the endomorphisms ring of $S$ is always a division ring \cite[Lemma 3.6]{lambook}). On the other hand, if we assume that $\mathbb{F}$ is a finite division ring, then it is known that $\mathbb{F}$ is a finite field (see \cite[p.~250]{herstein}). 
\bigskip

To define skew polynomial rings (Ore extensions), we begin by introducing the concept of $\sigma$-derivation.

\begin{definition}\label{def1}
Let $\sigma: \mathbb{F} \to \mathbb{F}$ be a non-zero ring endomorphism. An additive group homomorphism $\delta: \mathbb{\mathbb{F}}\to \mathbb{F}$ is called a
\textit{$\sigma$-derivation} (over $\mathbb{F}$) if
$$
\delta(ab) = \sigma(a)\delta(b) + \delta(a)b 
$$
for every $a, b \in \mathbb{F}$.
\end{definition}

\begin{example}
Let $\sigma:\mathbb{F} \to \mathbb{F}$ be a ring homomorphism and let $\beta \in \mathbb{F}$. The map $\delta_{\beta}:\mathbb{F}\to \mathbb{F}$ defined by $$\delta_{\beta}(a):=\sigma(a)\beta-\beta a$$
for all $a\in \mathbb{F}$ is a $\sigma$-derivation. These kind of derivations are called in literature simply \textit{inner derivations}.
\end{example}

\begin{remark}\label{Pre2}
From Definition \ref{def1}, it follows that $\delta(1)=\delta(-1)=0$. Furthermore, $\sigma$ is always a monomorphism but, in general, it is not an automorphism. For instance, in $\mathbb{F}_{p}(t):=\left\{\frac{f}{g}:f,g\in \mathbb{F}_{p}[t], g\neq 0\right\}$ (field of rational functions in the variable $t$ over the finite field $\mathbb{F}_{p}$ with $p$ prime), the endomorphism $\phi:\mathbb{F}_{p}(t) \to \mathbb{F}_{p}(t), x \mapsto x^{p}$ is not surjective because $\phi(\mathbb{F}_{p}(t))$ does not contain $t$.
\end{remark}

The role played by inner derivations when $\mathbb{F}$ is a field is shown by the following result, as indicated in \cite[Section 8.3]{cohnfree} (see also \cite[Proposition 39]{reedsalomoncodes}).

\begin{proposition} \label{inner derivations}
Let $\mathbb{F}$ be a field and consider an endomorphism $\sigma:\mathbb{F}\to \mathbb{F}$ and a $\sigma$-derivation $\delta:\mathbb{F}\to \mathbb{F}$. If $\sigma \neq {Id}$ (the identity automorphism), then $\delta$ is an inner derivation.
\end{proposition}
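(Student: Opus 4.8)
The plan is to exploit the commutativity of $\mathbb{F}$ to collapse the $\sigma$-derivation rule into a single functional identity linking $\delta$ and $\sigma$, and then to exhibit the element $\beta$ explicitly. First I would write the rule $\delta(ab)=\sigma(a)\delta(b)+\delta(a)b$ and, separately, $\delta(ba)=\sigma(b)\delta(a)+\delta(b)a$; since $ab=ba$ in a field, the left-hand sides agree, so
\[
\sigma(a)\delta(b)+\delta(a)b=\sigma(b)\delta(a)+\delta(b)a \qquad\text{for all }a,b\in\mathbb{F}.
\]
Using commutativity a second time to move scalars past one another ($\delta(a)b=b\,\delta(a)$, etc.), this rearranges to
\[
(\sigma(a)-a)\,\delta(b)=(\sigma(b)-b)\,\delta(a)\qquad\text{for all }a,b\in\mathbb{F}.
\]

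Next, since $\sigma\neq\mathrm{Id}$, I would fix $c\in\mathbb{F}$ with $\sigma(c)\neq c$; then $\sigma(c)-c$ is a nonzero element of a field, hence invertible, and I set $\beta:=(\sigma(c)-c)^{-1}\delta(c)$. Substituting $b=c$ in the displayed identity and solving for $\delta(a)$ gives, for every $a\in\mathbb{F}$,
\[
\delta(a)=(\sigma(a)-a)\,\beta=\sigma(a)\beta-\beta a=\delta_{\beta}(a),
\]
which is precisely the assertion that $\delta$ is the inner derivation determined by $\beta$.

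There is essentially no serious obstacle; the only point demanding attention is the two appeals to commutativity — once to identify $\delta(ab)$ with $\delta(ba)$, once to rewrite $(\sigma(a)-a)\delta(b)$ — which is exactly where the hypothesis that $\mathbb{F}$ is a \emph{field}, rather than an arbitrary division ring, is indispensable. I would also add a short remark that the choice of $c$ is available precisely because $\sigma\neq\mathrm{Id}$: without that hypothesis the claim fails, as any nonzero ordinary derivation on a field together with $\sigma=\mathrm{Id}$ is not inner (every inner derivation vanishes when $\sigma=\mathrm{Id}$).
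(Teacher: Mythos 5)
Your proof is correct and is exactly the standard argument: the paper itself omits the proof, deferring to \cite[Section 8.3]{cohnfree} and \cite[Proposition 39]{reedsalomoncodes}, and those sources argue precisely as you do, by equating $\delta(ab)$ with $\delta(ba)$ to obtain $(\sigma(a)-a)\delta(b)=(\sigma(b)-b)\delta(a)$ and then solving for $\delta(a)$ with $\beta:=(\sigma(c)-c)^{-1}\delta(c)$. Your closing observation about the necessity of $\sigma\neq\mathrm{Id}$ also matches the paper's subsequent remark on $Id$-derivations.
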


\begin{remark}
From Proposition \ref{inner derivations}, it is natural to ask how are the ${Id}$-derivations over a field. With respect to this question, we can distinguish two cases. If $\mathbb{F}$ is a finite field then the unique $Id$-derivation is $\delta=0$ (see \cite[Proposition 44]{reedsalomoncodes}). However, if $\mathbb{F}$ is infinite, then it is possible to define non-zero ${Id}$-derivations. For instance, the formal derivation with respect to the variable $t$, given by $\frac{d}{dt}$, is an ${Id}$-derivation over $\mathbb{F}_p(t)$.
\end{remark} 

Following some Ore's ideas in \cite{Ore}, we recall the ring of univariate skew polynomials.

\begin{definition}\label{ringskew}
Given a ring endomorphism $\sigma:\mathbb{F}\to \mathbb{F}$ and a $\sigma$-derivation $\delta:\mathbb{F}\to \mathbb{F}$, we define the \textit{univariate skew polynomial ring}, corresponding to $\sigma$ and $\delta$ and denoted by 
$$\mathcal{R}:=\mathbb{F}[x;\sigma,\delta] \ ,$$ 
as the set of all polynomials $\sum_{i}a_ix^{i}$ ($a_i \in \mathbb{F}$) with the usual sum of polynomials and the product defined accordingly to the following rule
\begin{eqnarray}
xa = \sigma(a)x+ \delta(a)\label{1.1}
\end{eqnarray}
for all $a\in \mathbb{F}$.
\end{definition}

\begin{example} \label{ejF_4}
Consider $\mathbb{F}_4[x;\sigma,\delta_{\alpha}]$ with $\mathbb{F}_4=\{ 0,1,\alpha, \alpha^2\}$, where $\alpha^2+\alpha+1=0$,  $\sigma(a)=a^2$ and $\delta_{\alpha}(a)=\alpha(\sigma(a)+a)$ for all $a \in \mathbb{F}_4$. Then we have 
$$\alpha x\cdot\alpha^2x=\alpha(x \alpha^2)x=\alpha (\sigma(\alpha^2)x+\delta_{\alpha}(\alpha^2))x=\alpha^2x^2+\alpha^2x \ ,$$
$$\alpha^2x \cdot \alpha x=\alpha^2(x\alpha)x=\alpha^2 (\sigma(\alpha)x+\delta_{\alpha}(\alpha))x= \alpha x^2+x \ .$$
\end{example}

\noindent The previous example shows that in general the product of skew polynomials is not commutative and that the product of two monomials is not a monomial. Moreover, let us recall here some basic properties of $\mathcal{R}$:

\begin{itemize}
\item[1.] Let $f(x)=\displaystyle\sum_{i=1}^{m}a_ix^{i}, g(x)=\displaystyle\sum_{j=1}^{n}b_jx^{j} \in \mathcal{R}$ of degrees $m$ and $n$, respectively. By \eqref{For1.3}, we get $f(x)g(x)=...+a_m\sigma^{m}(b_n)x^{m+n}$ and  $a_m\sigma^{m}(b_n) \neq 0$ (because $a_m,b_n \neq 0$ and $\sigma$ is a monomorphism). In particular, we have $\deg (fg) = \deg(f) + \deg (g)$ which implies that $\mathcal{R}$ has not zero divisors.
\item[2.] The Euclidean algorithm holds for \textit{right division} (see \cite[p.~483]{Ore}). For any $f(x),g(x) \in \mathcal{R}$ with $g(x) \neq 0$, there are unique $q(x),r(x)\in\mathcal{R}$ such that $$f(x) = q(x) g(x) + r(x)$$ with either $\deg(r) < \deg (g)$, or $r(x) = 0$. For instance, in $\mathbb{F}_4[x;\sigma,\delta]$ with $\sigma,\delta$ defined as in Example \ref{ejF_4}, if we divide $x^3$ by $\alpha x$, unlike the usual division algorithm, the action of $\delta$ in general makes the quotient is not a monomial, but a polynomial. In fact, we have $x^3=(\alpha^2x^2+x+\alpha)(\alpha x)$. 
\item[3.] An important consequence of the right division algorithm is that $\mathcal{R}$ is a left principal ideal domain (LPID), i.e. any left ideal $I \subset \mathcal{R}$ has the form $\mathcal{R}g$, where $g \in \mathcal{R}$ is a polynomial of minimal degree among non-zero elements
of $I$. However, it is also widely known that if $\sigma$ fails to be an automorphism of $\mathbb{F}$, i.e. $\sigma(\mathbb{F}) \neq \mathbb{F}$, then the left division does not work (see \cite[Theorem 6]{Ore}) showing that $\mathcal{R}$ is not in general a right principal ideal domain (RPID).
\end{itemize}

\noindent Consider now monomials $ax^{i},bx^{j},x^{i}\beta,x^{j}\alpha\in\mathcal{R}$. Motivated by Example \ref{ejF_4}, we will give formulas to calculate easily the products $ax^{i}\cdot bx^{j}$ and $x^{i}\beta\cdot x^{j}\alpha$.

\begin{definition}\label{Def Cij}
Let $a\in \mathbb{F}$. We define $\mathcal{C}_{d,s}(a)$ as the sum of all possible compositions (as functions) of $d$ copies of $\delta$ and $s$ copies of $\sigma$ evaluated in $a$ when $(d,s)\in \mathbb{Z}_{\geq 0}\times \mathbb{Z}_{\geq 0}\setminus (0,0)$, $\mathcal{C}_{0,0}(a)=a$ and $\mathcal{C}_{d,s}(a)=0$ otherwise. Moreover, if $\sigma$ is an automorphism, we define $\mathcal{T}_{t,r}(a)$ as the sum of all possible compositions (as functions) of $t$ copies of $\delta\sigma^{-1}$ and $r$ copies of $\sigma^{-1}$ evaluated in $a$ when $(t,r)\in \mathbb{Z}_{\geq 0}\times \mathbb{Z}_{\geq 0}\setminus (0,0)$, $\mathcal{T}_{0,0}(a)=a$ and $\mathcal{T}_{t,r}(a)=0$ otherwise.
\end{definition}

\begin{remark}\label{remark 2.9}
From Definition \ref{Def Cij}, for all $a\in \mathbb{F}$ and $(d,s)\in \mathbb{Z}\times \mathbb{Z}$ we have 
$$\mathcal{C}_{d,s}(a)=\delta( \mathcal{C}_{d-1,s}(a))+\sigma(\mathcal{C}_{d,s-1}(a))\ , \ \mathcal{T}_{d,s}(a)=\delta\sigma^{-1}( \mathcal{T}_{d-1,s}(a))+\sigma^{-1}(\mathcal{T}_{d,s-1}(a))\ .$$
\end{remark}

\begin{lemma}\label{LemaCij} Given a non-negative integer $i$ and $a\in \mathbb{F}$, we have
\begin{eqnarray}
  x^{i}a=\sum_{k=0}^{i}\mathcal{C}_{k,i-k}(a)x^{i-k} \ . \label{1.2}
\end{eqnarray}
Moreover, if $\sigma$ is an automorphism, we get
\begin{eqnarray}
  ax^{i}=\sum_{k=0}^{i}x^{i-k}(-1)^{k}\mathcal{T}_{k,i-k}(a). \label{1.2bis}
\end{eqnarray}  
\end{lemma}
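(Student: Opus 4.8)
The plan is to establish both identities by induction on $i$, using the two recurrences recorded in Remark \ref{remark 2.9} together with the vanishing conventions of Definition \ref{Def Cij} (namely $\mathcal{C}_{d,s}(a)=0$ and $\mathcal{T}_{d,s}(a)=0$ as soon as one of the indices is negative). For $i=0$ both statements are trivial, and for $i=1$ the first identity is precisely the defining relation \eqref{1.1}, since $\mathcal{C}_{0,1}(a)=\sigma(a)$ and $\mathcal{C}_{1,0}(a)=\delta(a)$.

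For the inductive step of \eqref{1.2}, I would write $x^{i+1}a=x\,(x^{i}a)$, insert the inductive hypothesis, and then commute each leading $x$ past $\mathcal{C}_{k,i-k}(a)$ by \eqref{1.1}, obtaining
$$x^{i+1}a=\sum_{k=0}^{i}\sigma\bigl(\mathcal{C}_{k,i-k}(a)\bigr)x^{i+1-k}+\sum_{k=0}^{i}\delta\bigl(\mathcal{C}_{k,i-k}(a)\bigr)x^{i-k}.$$
On the other side I would expand $\sum_{k=0}^{i+1}\mathcal{C}_{k,i+1-k}(a)x^{i+1-k}$ through the recurrence $\mathcal{C}_{k,i+1-k}(a)=\delta(\mathcal{C}_{k-1,i+1-k}(a))+\sigma(\mathcal{C}_{k,i-k}(a))$, split it into its $\sigma$-part and its $\delta$-part, discard the vanishing boundary summands ($\mathcal{C}_{i+1,-1}(a)=0$ in the $\sigma$-part, $\mathcal{C}_{-1,i+1}(a)=0$ in the $\delta$-part), and reindex the $\delta$-part by $j=k-1$; the two sides then coincide term by term.

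For \eqref{1.2bis}, which needs $\sigma$ invertible, I would first derive the companion of \eqref{1.1}: substituting $\sigma^{-1}(a)$ for $a$ in \eqref{1.1} gives $x\sigma^{-1}(a)=ax+\delta\sigma^{-1}(a)$, hence $ax=x\,\mathcal{T}_{0,1}(a)-\mathcal{T}_{1,0}(a)$ with $\mathcal{T}_{0,1}(a)=\sigma^{-1}(a)$ and $\mathcal{T}_{1,0}(a)=\delta\sigma^{-1}(a)$, which handles $i=1$. Then I would run the mirror induction: write $ax^{i+1}=(ax^{i})x$, apply the hypothesis, move the trailing $x$ to the left past each $\mathcal{T}_{k,i-k}(a)$ via the companion relation, collect powers of $x$, and compare with the expansion of $\sum_{k=0}^{i+1}x^{i+1-k}(-1)^{k}\mathcal{T}_{k,i+1-k}(a)$ coming from $\mathcal{T}_{k,i+1-k}(a)=\delta\sigma^{-1}(\mathcal{T}_{k-1,i+1-k}(a))+\sigma^{-1}(\mathcal{T}_{k,i-k}(a))$. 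The extremal summands vanish as before, and the alternating signs match because moving $x$ leftward contributes the factor $-1$ exactly to the $\delta\sigma^{-1}$-term. I do not expect a conceptual obstacle here: the only delicate point — and the usual source of error in arguments of this shape — is the bookkeeping of the index shifts and the verification that the boundary terms are zero, which is why keeping the conventions of Definition \ref{Def Cij} firmly in mind throughout is essential.
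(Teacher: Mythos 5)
Your proof is correct and follows essentially the same route as the paper: induction on $i$ via the recurrences of Remark \ref{remark 2.9}, with the base cases read off from the definition of $\mathcal{C}_{0,0}$ and relation \eqref{1.1}. The only difference is that you spell out the mirror argument for \eqref{1.2bis} (deriving the companion relation $ax=x\sigma^{-1}(a)-\delta\sigma^{-1}(a)$ and tracking the sign), which the paper dispatches with ``by reasoning in a similar way''; your index bookkeeping and boundary-term checks are all sound.
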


\begin{proof}
We prove \eqref{1.2} by induction
on $i$. If $i=0,1$, then it is true by the definition of $\mathcal{C}_{0,0}(a)$ and \eqref{1.1}, respectively. So, assume that this formula is true for some $i-1\geq 0$, i.e. $x^{i-1}a=\sum_{k=0}^{i-1}\mathcal{C}_{k,i-1-k}(a)x^{i-1-k}$. Then, by Remark \ref{remark 2.9} we obtain that
\begin{align*}
x^{i}a &= x (x^{i-1} a) \\
&= x\cdot \left(\ \sum_{k=0}^{i-1}\mathcal{C}_{k,i-1-k}(a)x^{i-1-k}\ \right) \\
&=x \cdot \mathcal{C}_{0,i-1}(a)x^{i-1}+x\cdot \mathcal{C}_{1,i-2}(a)x^{i-2}+x\cdot \mathcal{C}_{2,i-3}(a)x^{i-3}+...+x\cdot \mathcal{C}_{i-1,0}(a)\\
&= \sigma (\mathcal{C}_{0,i-1}(a))x^{i}+\left[\delta( \mathcal{C}_{0,i-1}(a))+\sigma (\mathcal{C}_{1,i-2}(a))\right]x^{i-1}+....+\delta(\mathcal{C}_{i-1,0}(a))\\
&=\mathcal{C}_{0,i}(a)x^{i}+\mathcal{C}_{1,i-1}(a)x^{i-1}+\mathcal{C}_{2,i-2}(a)x^{i-2}+...+\mathcal{C}_{i,0}(a)\\
&=\sum_{k=0}^{i}\mathcal{C}_{k,i-k}(a)x^{i-k} \ .
\end{align*}
By reasoning in a similar way, we obtain the expression for $ax^i$ as in the statement.
\end{proof} 
 
\medskip 

From Lemma \ref{LemaCij}, it follows that
$$ax^{i}\cdot bx^{j}=\displaystyle\sum_{k=0}^{i}a \cdot \mathcal{C}_{k,i-k}(b)x^{i+j-k} \ \ , \quad 
x^i\beta\cdot x^j\alpha=\sum_{k=0}^{j}x^{i+j-k}(-1)^{k}\mathcal{T}_{k,j-k}(\beta)\alpha \ .
$$
Furthermore, given non-zero skew polynomials 
$$f(x)=\displaystyle\sum_{i=0}^{m}a_ix^{i}\ , \ g(x)=\displaystyle\sum_{j=0}^{n}b_jx^{j}\ , \ F(x)=\displaystyle\sum_{i=0}^{m}x^{i}\beta_i\ , \ G(x)=\displaystyle\sum_{j=0}^{n}x^{j}\alpha_j$$
in $\mathcal{R}$, we have
\begin{eqnarray}\label{For1.3}
f(x)g(x)=\sum_{i=0}^{m} \sum_{j=0}^{n}\left( \sum_{k=0}^{i}a_i\;  \mathcal{C}_{k,i-k}(b_j)x^{i+j-k}\right) \ ,
\end{eqnarray}
\begin{eqnarray}\label{For1.3bis}
F(x)G(x)=\displaystyle{\sum_{i=0}^{m}\sum_{j=0}^{n}\left(\sum_{k=0}^{j}x^{i+j-k}(-1)^k \mathcal{T}_{k,j-k}(\beta_i)\cdot \alpha_j\right)} \ .
\end{eqnarray}
In particular, if $\delta=0$, then we get
\begin{eqnarray}\label{For4}
\mathcal{C}_{d,s}(a)=\left \{ \begin{array}{lcc}
   a  &  \text{if} & d=s=0\\
   \sigma^{s}(a) &  \text{if} & d=0\;\text{and}\;s\neq 0 \\
   0 &  \text{if} &  d \neq 0\\
\end{array} \right.
\end{eqnarray}
\begin{center}
   $\mathcal{T}_{d,s}(a)=\left \{ 
   \begin{array}{lcc}
   a  &  \text{if} & d=s=0\\
   \sigma^{-s}(a) &  \text{if} & d=0\;\text{and}\;s\neq 0 \\
   0 &  \text{if} & d \neq 0\\
   \end{array} \right.$ 
\end{center}
for all $a\in \mathbb{F}$ and $d,s\in \mathbb{Z}_{\geq 0}$. Thus, the products $f(x)g(x)$ and $F(x)G(x)$ become simply
$$f(x)g(x)=\left(\sum_{i=0}^{m}a_ix^{i} \right)\cdot \left(\sum_{j=0}^{n}b_jx^{j}\right)=\sum_{i=0}^{m} \sum_{j=0}^{n}a_i\sigma^{i}(b_j)x^{i+j} \ ,$$
$$F(x)G(x)=\left(\sum_{i=0}^{m}x^{i}\beta_i \right)\cdot \left(\sum_{j=0}^{n}x^{j}\alpha_j\right)=\sum_{i=0}^{m} \sum_{j=0}^{n}x^{i+j}\sigma^{-j}(\beta_i)\alpha_j \ .$$

\bigskip

\noindent The following algorithms show how to compute $\mathcal{C}_{d,s}(a)$ and $\mathcal{T}_{d,s}(a)$ (see Definition \ref{Def Cij}).

\begin{algorithm}[h!]
 \small
\caption{\label{algoritmo 1} Computation of $\mathcal{C}_{d,s}(a)$.}
\label{Alg1}
\begin{algorithmic}[1]
\Require{ $\mathcal{R}$, $d,s\in\mathbb{Z}_{\geq 0}$ and $a\in\mathbb{F}$}
\Ensure{ $\mathcal{C}_{d,s}(a)$}
\State{Let $S_d\subset \mathbb{F}_2^{d+s}$ be the set of codewords with weight equal to $d$.}
\State{$\mathcal{C}_{d,s}(a) \gets 0$}
\ForAll{$(s_{1}, s_{2},\dots,s_{d+s})\in S_d$}
\State{$b\gets a$}
\For{$i\gets 1$ to $d+s$}
\If{$s_i=0$}
\State{$b\gets \sigma(b)$}
%
%
%
\Else
\State{$b\gets \delta(b)$}
\EndIf
\EndFor
\State{$\mathcal{C}_{d,s}(a)\gets \mathcal{C}_{d,s}(a)+b$}
\EndFor\\
\Return{$\mathcal{C}_{d,s}(a)$}
\end{algorithmic}
\end{algorithm}

\newpage

\begin{algorithm}[h!]
 \small
\caption{Computation of $\mathcal{T}_{d,s}(a)$.}
\label{Alg2bis}
\begin{algorithmic}[1]
\Require{$\mathcal{R}$, $\sigma^{-1}$, $d, s\in\mathbb{Z}_{\geq 0}$ and $a\in\mathbb{F}$}
\Ensure{ $\mathcal{T}_{d,s}(a)$}
\State{Let $S_d\subset \mathbb{F}_2^{d+s}$ be the set of codewords with weight equal to $d$.}
\State{$\mathcal{T}_{d,s}(a) \gets 0$}
\ForAll{$(s_{1}, s_{2},\dots,s_{d+s})\in S_d$}
\State{$b\gets a$}
\For{$i\gets 1$ to $d+s$}
\If{$s_i=1$}
\State{$b\gets \delta (\sigma^{-1}(b))$}
\Else
\State{$b\gets \sigma^{-1}(b)$}
\EndIf
\EndFor
\State{$\mathcal{T}_{d,s}(a)\gets \mathcal{T}_{d,s}(a)+b$}
\EndFor\\
\Return{$\mathcal{T}_{d,s}(a)$}
\end{algorithmic}
\end{algorithm}

\medskip

For instance, let $\mathbb{F}_4[x;\sigma,\delta]$ be the skew polynomial ring over the finite field $\mathbb{F}_4=\{ 0,1,w, w^2\}$, with $\sigma(a)=a^2$ and $\delta(a)=w(\sigma(a)+a)$ for all $a \in \mathbb{F}_4$.  As an application of Algorithm \ref{Alg1}, let us give here for this situation the following Magma program \cite{Magma}.

\medskip

\begin{verbatim}
F<w>:=GF(4); 
S:= map< F -> F | x :-> x^2 >;
D:= map< F -> F | x :-> w*(S(x)+x) >;
\end{verbatim}

\smallskip

Then, by the following instructions, we define the function ``PosCom".

\begin{prog}\label{prog 1} ~
\begin{verbatim}
PosCom:=function(d,s,a)
C:= [u: u in [VectorSpace(GF(2),d+s)!v : v in VectorSpace(GF(2),d+s)\
]| Weight(u) eq d];
A:=0;
 for k in [1..#C] do
  b:=a;
   for l in [1..d+s] do
    if C[k][l] eq 0 then
     b:=S(b);
     else
     b:=D(b); 
    end if;
   end for;
  A:=A+b;
 end for;
return A;
end function;
\end{verbatim} 
\end{prog}

\medskip

Thus, to calculate the value of $\mathcal{C}_{1,2}(w)=\delta\sigma^2(w)+\sigma\delta\sigma(w)+\sigma^2\delta(w)$,
we can simply write

\begin{verbatim} 
PosCom(1,2,w);
\end{verbatim}

\noindent which gives the answer $w^2$. 
Similarly, one can write a Magma program to calculate $\mathcal{T}_{d,s}(a)$ for any $d,s\in\mathbb{Z}_{\geq 0}$ and $a\in\mathbb{F}$.

\medskip

Using Algorithms \ref{Alg1} and \ref{Alg2bis}, one can compute the products $f\cdot g\ ,\ F\cdot G$ as in formulas \eqref{For1.3} and \eqref{For1.3bis} by the next two algorithms. 

\begin{algorithm}[h!]{}
\small
\caption{\label{Alg2} Computation of $f(x)\cdot g(x)$, where $f(x)=a_0+a_1x+\cdots+a_mx^m$ and $g(x)=b_0+b_1x+\cdots+b_nx^n$.}
\begin{algorithmic}[1]
\Require{$f,g\in\mathcal{R}$}
\Ensure{ $M=f(x)\cdot g(x)$}
\State{$M\gets 0$}
\For{$i\gets 1$ to Degree$(f)+1$}
\For{$j\gets 1$ to Degree$(g)+1$}
\For{$k\gets 0$ to $i-1$}
\State{$n\gets (i-1)+(j-1)-k$}
\State{$M\gets M+a_{i-1}\cdot \mathcal{C}_{k,i-1-k}(b_{j-1})\cdot x^n$}
\EndFor
\EndFor
\EndFor\\
\Return{$M$}
\end{algorithmic}
\end{algorithm}

\begin{algorithm}[h!]{}
\small
\caption{\label{Alg4bis} Computation of $F(x)\cdot G(x)$, where $F(x)=\beta_0+x\beta_1+\cdots+x^m\beta_m$ and $G(x)=\alpha_0+x\alpha_1+\cdots+x^n\alpha_n$.}
\begin{algorithmic}[1]
\Require{$F,G\in\mathcal{R}$}
\Ensure{ $M=F(x)\cdot G(x)$}
\State{$M\gets 0$}
\For{$i\gets 1$ to Degree$(F)+1$}
\For{$j\gets 1$ to Degree$(G)+1$}
\For{$k\gets 0$ to $j-1$}
\State{$n\gets (i-1)+(j-1)-k$}
\State{$M\gets M+x^n\cdot (-1)^k \mathcal{T}_{k,j-1-k}(b_{i-1})\cdot\alpha_{j-1}$}
\EndFor
%
%
%
%
\EndFor
\EndFor\\
\Return{$M$}
\end{algorithmic}
\end{algorithm}

\medskip

For instance, as an application of Algorithm \ref{Alg2}, we give here a Magma program to compute the products $fg$ and $gf$ when $f=x^2+1$ and $g=x^2+i$ are skew polynomials in $\mathbb{C}[x;\sigma,\delta]$ with $\sigma(z)=\bar{z}$ and $\delta(z)=z-\bar{z}$ for all $z\in \mathbb{C}$. 

\medskip

We begin by writing the following instructions:

\smallskip

\begin{verbatim}
F<i>:=ComplexField();
R<x>:=PolynomialRing(F);
S:= map< F -> F | x :-> ComplexConjugate(x) >;
D:= map< F -> F | x :-> x-ComplexConjugate(x) >;
\end{verbatim}

\smallskip

then, using the function ``PosCom" defined in Program \ref{prog 1}, we can continue with the  following instructions to define the new function ``MultPol".

\begin{prog}\label{prog 2} ~
\begin{verbatim}
MultPol:=function(f,g)
M:=0;
 for i in [1..#f] do
  for j in [1..#g] do
   for k in [0..i-1] do
    n:=(i-1)+(j-1)-k;
    M:=M+f[i]*PosCom(k,i-k-1,g[j])*x^n;
   end for;
  end for;
 end for;
return M;
end function;
\end{verbatim}
\end{prog}

\medskip

Thus, to calculate $(x^2+1)(x^2+i)$ and $(x^2+i)(x^2+1)$, we write in Magma
\begin{verbatim}
MultPol([1,0,1],[i,0,1]);
MultPol([i,0,1],[1,0,1]);
\end{verbatim}
\noindent obtaining
$x^4 + (1 +i)x^2 - 4ix + 5i$ and
$x^4 + (1 + i)x^2 + i$, respectively.

\bigskip

Finally, let us recall here also the process of ``evaluating'' a skew polynomial $f(x)\in\mathcal{R}$ at an element $a\in \mathbb{F}$. To define an evaluation map for a skew polynomial ring, we need to consider the action of $\sigma$ and $\delta$. Indeed, the classical map that simply replaces the variable $x$ with a value $a \in \mathbb{F}$ does not work in $\mathcal{R}$. So, Lam and Leroy in  \cite[p.~310]{lamandleroy} defined an appropriate evaluation using the fact that $\mathcal{R}$ is a right Euclidean domain.

\begin{definition}
For $a \in \mathbb{F}$ and $f \in\mathcal{R}$, where $\sigma$ is an endomorphism (automorphism) of $\mathbb{F}$, we define the right (left) evaluation of $f$ in $a$, denoted by $f(a)$ $(f_L(a))$, as the unique remainder upon right (left) division of
$f$ by $x-a$. In the special case when $f(a)=0$ $(f_L(a)=0)$, we say that $a$ is a \textit{right (left) zero} of $f$.
\end{definition}

We can also compute the right (left) evaluation of a polynomial in $\mathcal{R}$ at $a\in\mathbb{F}$ without using the right (left) division algorithm. To do this, we first need the following technical result (see also \cite[Theorem 7]{Ore}).

\begin{lemma}\label{Res3.1}
Let $\sigma$ be an automorphism of $\mathbb{F}$. Every skew polynomial  $f=\sum_{i=0}^{m}a_ix^{i}\in\mathcal{R}$ can be represented as a polynomial with right-hand coefficients, that is, we can write 
$$f=\sum_{i=0}^{m}a_ix^{i}=\sum_{i=0}^{m}x^{i}\mathcal{A}_i \ ,$$ 
where
\begin{equation} \label{For3.3}
\mathcal{A}_i:=\sum_{j=0}^{m-i}(-1)^{j}\mathcal{T}_{j,i}(a_{j+i})\ , \ \ \forall i=0,...,m \ .
\end{equation} 
\end{lemma}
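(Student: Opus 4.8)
The plan is to establish the identity $\sum_{i=0}^m a_i x^i = \sum_{i=0}^m x^i \mathcal{A}_i$ by expanding each monomial $a_i x^i$ using formula~\eqref{1.2bis} of Lemma~\ref{LemaCij} and then collecting the resulting terms by powers of $x$. First I would apply \eqref{1.2bis} to each summand, writing
\[
a_i x^i = \sum_{k=0}^{i} x^{i-k}(-1)^k \mathcal{T}_{k,i-k}(a_i),
\]
so that
\[
f = \sum_{i=0}^m \sum_{k=0}^{i} x^{i-k}(-1)^k \mathcal{T}_{k,i-k}(a_i).
\]
The next step is a change of summation index: set $\ell = i-k$, so $k = i-\ell$ with $0 \le \ell \le i$, and then swap the order of summation so that $\ell$ runs over $0,\dots,m$ on the outside. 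For a fixed $\ell$, the index $i$ ranges over $\ell,\ell+1,\dots,m$, and writing $i = j+\ell$ (so $j = k$ runs from $0$ to $m-\ell$) gives the coefficient of $x^\ell$ on the right-hand side as $\sum_{j=0}^{m-\ell}(-1)^j \mathcal{T}_{j,\ell}(a_{j+\ell})$, which is exactly $\mathcal{A}_\ell$ as defined in \eqref{For3.3}.

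Rather than (or in addition to) this reindexing argument, one could also give a clean induction on $m$: assuming the claim for polynomials of degree at most $m-1$, write $f = g + a_m x^m$ where $g$ has degree $\le m-1$, apply the inductive hypothesis to $g$, expand $a_m x^m$ via \eqref{1.2bis}, and check that the coefficient of each $x^i$ reassembles into $\mathcal{A}_i$; here one would use the defining recursion for $\mathcal{T}_{d,s}$ from Remark~\ref{remark 2.9} to match the bookkeeping. I expect to present the direct reindexing computation as the main argument since it is the most transparent, but I would mention that the hypothesis that $\sigma$ is an automorphism is essential: formula \eqref{1.2bis}, on which everything rests, is only available in that case because $\mathcal{T}_{t,r}$ is defined in terms of $\sigma^{-1}$.

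The only real obstacle is purely notational: one must be careful with the double sum and the interchange of summation order, making sure the ranges of the indices are correctly tracked through the substitution $\ell = i-k$. There is no conceptual difficulty beyond Lemma~\ref{LemaCij}; the signs $(-1)^k$ carry through unchanged since $k = j$ after reindexing, and the resulting expression matches \eqref{For3.3} term by term.
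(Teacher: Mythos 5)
Your proposal is correct and follows essentially the same route as the paper: expand each $a_ix^i$ via formula \eqref{1.2bis} of Lemma \ref{LemaCij} and then regroup the double sum by powers of $x$, which is exactly the reindexing you describe. The paper performs the same collection of terms explicitly, so no further comment is needed.
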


\begin{proof}
By Lemma \ref{LemaCij} we have 
$$
\sum_{i=0}^{m}a_ix^{i}=\sum_{i=0}^{m}\left(\ \sum_{k=0}^{i} x^{i-k}(-1)^{k}\mathcal{T}_{k,i-k}(a_i)\ \right)= \left(\ \sum_{h=0}^{m} x^0(-1)^h\mathcal{T}_{h,0}(a_h)\ \right)+
$$
$$   + \left(\ \sum_{h=1}^{m} x^1(-1)^{h-1}\mathcal{T}_{h-1,1}(a_{h})\ \right)+ \dots +
    \left(\ \sum_{h=m}^{m} x^m(-1)^{h-m}\mathcal{T}_{h-m,m}(a_{h})\ \right)=
$$  
$$ = x^0\left(\ \sum_{j=0}^{m}(-1)^j\mathcal{T}_{j,0}(a_j)\ \right)+ x^1\left(\ \sum_{j=0}^{m-1} (-1)^j\mathcal{T}_{j,1}(a_{j+1})\ \right)+ \dots +
    x^m\left(\ \sum_{j=0}^{m-m} (-1)^j\mathcal{T}_{j,m}(a_{j+m})\ \right)
$$
and this leads to the statement. 
\end{proof}

\begin{lemma}\label{F}{ \normalfont \cite[Lemma 2.4]{lamandleroy}}
For $f(x)=\sum_{i}a_ix^{i}\in \mathbb{F}[x;\sigma, \delta]$ and $a\in \mathbb{F}$, we have $f(a) = \sum_{i} a_i N_i^{\sigma,\delta}(a)$, where $N_0^{\sigma,\delta}(a) :=1$ and $N_{i}^{\sigma,\delta}(a) :=\sigma(N_{i-1}^{\sigma,\delta}(a))a+\delta(N_{i-1}^{\sigma,\delta}(a))$.
\end{lemma}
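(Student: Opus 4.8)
The plan is to work directly from the definition of the right evaluation: $f(a)$ is the unique element of $\mathbb{F}$ for which $f(x)=q(x)(x-a)+f(a)$ for some $q(x)\in\mathcal{R}$, this existence and uniqueness coming from the right division algorithm (the remainder upon right division by $x-a$ has degree $<1$, hence lies in $\mathbb{F}$). First I would record that the map $f\mapsto f(a)$ is additive and left $\mathbb{F}$-linear. Indeed, if $x^i=q_i(x)(x-a)+(x^i)(a)$ for each $i$, then for $f=\sum_i a_ix^i$ we may multiply on the left by $a_i$ and sum, obtaining $f(x)=\bigl(\sum_i a_i q_i(x)\bigr)(x-a)+\sum_i a_i\,(x^i)(a)$; since $\sum_i a_i\,(x^i)(a)\in\mathbb{F}$, uniqueness of the remainder forces $f(a)=\sum_i a_i\,(x^i)(a)$. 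Thus it suffices to show $(x^i)(a)=N_i^{\sigma,\delta}(a)$ for all $i\geq 0$.

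I would prove this by induction on $i$. For $i=0$ we have $x^0=1=0\cdot(x-a)+1$, so $(x^0)(a)=1=N_0^{\sigma,\delta}(a)$. Assume $x^{i-1}=q_{i-1}(x)(x-a)+N_{i-1}^{\sigma,\delta}(a)$. Multiplying on the left by $x$ and applying the commutation rule \eqref{1.1} to $x\cdot N_{i-1}^{\sigma,\delta}(a)=\sigma\bigl(N_{i-1}^{\sigma,\delta}(a)\bigr)x+\delta\bigl(N_{i-1}^{\sigma,\delta}(a)\bigr)$, we get
\[
x^i=x\,q_{i-1}(x)(x-a)+\sigma\bigl(N_{i-1}^{\sigma,\delta}(a)\bigr)x+\delta\bigl(N_{i-1}^{\sigma,\delta}(a)\bigr).
\]
Rewriting $\sigma\bigl(N_{i-1}^{\sigma,\delta}(a)\bigr)x=\sigma\bigl(N_{i-1}^{\sigma,\delta}(a)\bigr)(x-a)+\sigma\bigl(N_{i-1}^{\sigma,\delta}(a)\bigr)a$ and collecting the multiples of $(x-a)$ yields
\[
x^i=\Bigl(x\,q_{i-1}(x)+\sigma\bigl(N_{i-1}^{\sigma,\delta}(a)\bigr)\Bigr)(x-a)+\sigma\bigl(N_{i-1}^{\sigma,\delta}(a)\bigr)a+\delta\bigl(N_{i-1}^{\sigma,\delta}(a)\bigr).
\]
By uniqueness of the remainder, $(x^i)(a)=\sigma\bigl(N_{i-1}^{\sigma,\delta}(a)\bigr)a+\delta\bigl(N_{i-1}^{\sigma,\delta}(a)\bigr)=N_i^{\sigma,\delta}(a)$, which closes the induction.

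The only delicate point is the left $\mathbb{F}$-linearity step: one must keep the coefficients $a_i$ on the left throughout, so that summing the per-monomial identities really does produce a legitimate right division of $f$ by $x-a$ whose remainder is $\sum_i a_i\,(x^i)(a)$; uniqueness then identifies this with $f(a)$. Everything else is a routine manipulation with \eqref{1.1}, and, since only right division is used, no hypothesis on $\sigma$ beyond being a ring monomorphism is required.
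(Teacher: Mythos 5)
Your proof is correct. The paper does not prove this lemma itself but simply cites \cite[Lemma 2.4]{lamandleroy}, and your argument --- reduce to monomials via left $\mathbb{F}$-linearity of the remainder map, then induct on $i$ using the commutation rule \eqref{1.1} and uniqueness of the remainder upon right division by $x-a$ --- is exactly the standard proof given in that reference, with all the delicate points (keeping coefficients on the left, needing only that $\sigma$ be an endomorphism) correctly handled.
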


\begin{lemma}\label{evizquierda}{ \normalfont \cite[Theorem 3.1]{phdtesis}}
Let $\sigma$ be an automorphism of $\mathbb{F}$. For $f(x)=\sum_{i}a_ix^{i}=\sum_{i}x^i\mathcal{A}_i\in \mathbb{F}[x;\sigma, \delta]$ and $a\in \mathbb{F}$, we have $f_L(a) = \sum_{i} M_i^{\sigma,\delta}(a)\mathcal{A}_i$, where $M_0^{\sigma,\delta}(a) :=1$ and $M_{i}^{\sigma,\delta}(a) :=a\sigma^{-1}(M_{i-1}^{\sigma,\delta}(a))-\delta\sigma^{-1}(M_{i-1}^{\sigma,\delta}(a))$.
\end{lemma}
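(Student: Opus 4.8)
The plan is to imitate, on the ``other side'', the proof of the Lam--Leroy formula recalled in Lemma~\ref{F}. Recall that $f_L(a)$ is by definition the remainder of the \emph{left} division of $f$ by $x-a$, so that $f-f_L(a)\in(x-a)\mathcal{R}$, and observe that $(x-a)\mathcal{R}$ is a \emph{right} ideal of $\mathcal{R}$. Since $\sigma$ is an automorphism, $f$ admits the right-hand-coefficient expansion $f=\sum_i x^i\mathcal{A}_i$ of Lemma~\ref{Res3.1}, and the assignment $f\mapsto f_L(a)$ is right $\mathbb{F}$-linear on this expansion; hence it is enough to understand how each monomial $x^i$ reduces modulo $x-a$ from the left.

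\textbf{Step 1 (key congruence).} The claim is that for every integer $i\ge 0$ there is $q_i\in\mathcal{R}$ with
\[
x^i=(x-a)\,q_i+M_i^{\sigma,\delta}(a),
\]
i.e. $x^i\equiv M_i^{\sigma,\delta}(a)\pmod{(x-a)\mathcal{R}}$. This is proved by induction on $i$, the case $i=0$ being trivial because $M_0^{\sigma,\delta}(a)=1$. For the inductive step I write $x^i=x^{i-1}\cdot x$ and use the inductive hypothesis to get $x^i=(x-a)q_{i-1}x+M_{i-1}^{\sigma,\delta}(a)\,x$. The term $(x-a)q_{i-1}x$ already lies in the right ideal $(x-a)\mathcal{R}$, so only $M_{i-1}^{\sigma,\delta}(a)\,x$ needs attention. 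From \eqref{1.1} one checks directly that for every $c\in\mathbb{F}$
\[
c\,x=(x-a)\,\sigma^{-1}(c)+\bigl(a\,\sigma^{-1}(c)-\delta\sigma^{-1}(c)\bigr)
\]
(indeed $x\sigma^{-1}(c)=cx+\delta\sigma^{-1}(c)$). Taking $c=M_{i-1}^{\sigma,\delta}(a)$, the constant term on the right is exactly $M_i^{\sigma,\delta}(a)$ by the defining recursion, so
\[
x^i=(x-a)\bigl(q_{i-1}x+\sigma^{-1}(M_{i-1}^{\sigma,\delta}(a))\bigr)+M_i^{\sigma,\delta}(a),
\]
which completes the induction.

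\textbf{Step 2 (conclusion).} Multiplying the identity of Step~1 on the right by $\mathcal{A}_i$ and summing over $i$ yields
\[
f=\sum_i x^i\mathcal{A}_i=(x-a)\Bigl(\sum_i q_i\mathcal{A}_i\Bigr)+\sum_i M_i^{\sigma,\delta}(a)\,\mathcal{A}_i ,
\]
where $\sum_i M_i^{\sigma,\delta}(a)\mathcal{A}_i\in\mathbb{F}$ has degree strictly less than $\deg(x-a)=1$. By the uniqueness of quotient and remainder in the left division algorithm, this identity exhibits $f_L(a)=\sum_i M_i^{\sigma,\delta}(a)\mathcal{A}_i$, as claimed.

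The only point requiring care is the bookkeeping of sides: because $f_L(a)$ is governed by the \emph{right} ideal $(x-a)\mathcal{R}$, the induction must strip an $x$ off the \emph{right} end of $x^i$ (so that the bulk of the expression is absorbed), and the scalar commutation must be written with $\sigma^{-1}$ carrying $c$ back through $x$ --- which is precisely where the hypothesis that $\sigma$ is an automorphism enters (it is also what makes left division by $x-a$ well defined and what guarantees the expansion $f=\sum_i x^i\mathcal{A}_i$ in the first place). Beyond this, the argument is a routine computation entirely parallel to the proof of Lemma~\ref{F}.
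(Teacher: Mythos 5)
Your proof is correct. The paper does not actually prove this lemma --- it is quoted from \cite[Theorem 3.1]{phdtesis} --- but your argument is the natural one and every step checks out: the commutation identity $cx=(x-a)\sigma^{-1}(c)+\bigl(a\sigma^{-1}(c)-\delta\sigma^{-1}(c)\bigr)$ follows directly from \eqref{1.1} applied to $x\sigma^{-1}(c)$, the induction correctly produces $x^i\equiv M_i^{\sigma,\delta}(a)\pmod{(x-a)\mathcal{R}}$, and the conclusion via right-multiplication by the $\mathcal{A}_i$ and uniqueness of the left quotient/remainder (valid since $\sigma$ is an automorphism) is exactly the left-handed mirror of the standard proof of Lemma \ref{F}.
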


\begin{example}
In $\mathbb{F}_4[x;\sigma,0]$ with $\sigma(a)=a^2$, the binomials $(x+1),(x+\alpha^2)$ and $(x+\alpha)$ are all right (left) factors of $f(x):=x^2+1$. Thus, $\mathcal{R}$ is not in general a unique factorization domain. Moreover, $\{1,\alpha,\alpha^2\}$ are right (left) zeros of $f(x)$, showing that in general a skew polynomial of degree $n\geq 2$ could have more than $n$ roots, possibly infinite. For instance, consider $\mathbb{C}[x;\sigma,0]$ with $\sigma$ the complex conjugation (i.e. $\sigma(w)=\bar{w}$ for all $w\in\mathbb{C}$). Then,  since $\sigma^{-1}=\sigma$, by applying Lemmas \ref{F} and \ref{evizquierda}, we see that all the complex numbers $z$ such that $|z|=1$ are right (left) roots of the polynomial $x^2-1\in\mathbb{C}[x;\sigma,0]$.
\end{example}

\medskip

For $f(x) = g(x)h(x) \in\mathcal{R}$ and $a\in \mathbb{F}$,
we do not have $f(a) = g(a)h(a)$ in general. To properly define the right (left) evaluation of a product, we first need the notion of the right (left) $(\sigma,\delta)$-conjugacy (see \cite[p.~311--312]{lamandleroy} and \cite[p. 24 for $\delta=0$]{phdtesis}).

\begin{definition}\label{conj}
Given $a \in \mathbb{F}, \; c \in \mathbb{F}^{*}:=\mathbb{F}\setminus\{0\}$, we define the right (left) \textit{$(\sigma,\delta)$-conjugate} $a^c$ (\ ${}^{c}a$\ ) of $a$ with respect to $c$ as
$$a^c := \sigma(c)ac^{-1} + \delta(c)c^{-1}\in \mathbb{F}
\qquad (\ {}^{c}a := c^{-1}a\sigma^{-1}(c)-c^{-1}\delta(\sigma^{-1}(c))\in\mathbb{F} \ ) \ . $$
\end{definition}

\begin{remark}\label{lem}
Given $a,b \in \mathbb{F}$ and $c,d \in \mathbb{F}^{*}$, we have ${}^{1}a=a$, ${}^{d}({}^{c}a)={}^{cd}a$ and the relation $\sim_L$ defined on $\mathbb{F}$ as
    
\smallskip    

\centerline{$a \sim_L b$ $\iff\ \exists \ e \in \mathbb{F}^{*}$ such that $b={}^{e}a$,} 

\smallskip    

\noindent is an equivalence relation on $\mathbb{F}$. 
\end{remark}

\noindent Using Definition \ref{conj}, the following result provides formulas for right (left) evaluating a product (see \cite[Theorem 2.7]{lamandleroy} and \cite[Theorem 3.2 for $\delta=0$]{phdtesis}).

\begin{theorem}\label{F1}
Let $f(x),g(x) \in\mathcal{R}$ and $a \in \mathbb{F}$. Then the following properties hold:
\begin{enumerate}
\item[$1)$] If $g(a) = 0$, then $(f\cdot g)(a) = 0$; if $g(a) \neq 0$ then $(f\cdot g)(a) = f\left(a^{g(a)}\right)g(a)$;
\item[$2)$] If $g_L(a) = 0$, then $(g\cdot f)_L(a) = 0$; if $g_L(a) \neq 0$ then $(g\cdot f)_L(a) = g_L(a)f_L\left({}^{g_L(a)}a\right)$.
\end{enumerate}
\end{theorem}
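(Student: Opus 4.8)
The plan is to reduce the statement to a single commutation identity between a linear polynomial $x-a$ and a non-zero scalar, together with the uniqueness of quotient and remainder in the right (left) division algorithm for $\mathcal{R}$.

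\emph{Step 1 (scalar commutation).} Working only from the multiplication rule \eqref{1.1} and Definition \ref{conj}, I would first record the identities
\[
(x-a^{c})\,c=\sigma(c)\,(x-a)
\qquad\text{and}\qquad
c\,(x-{}^{c}a)=(x-a)\,\sigma^{-1}(c)\,,
\]
valid for all $a\in\mathbb{F}$ and $c\in\mathbb{F}^{*}$, the second one under the standing hypothesis that $\sigma$ is an automorphism. Each is a one-line check: in the first, $a^{c}c=\sigma(c)a+\delta(c)$ while $xc=\sigma(c)x+\delta(c)$, so the $\delta(c)$ contributions cancel; in the second, $x\,\sigma^{-1}(c)=cx+\delta(\sigma^{-1}(c))$ while $c\cdot{}^{c}a=a\,\sigma^{-1}(c)-\delta(\sigma^{-1}(c))$, and again the extra terms cancel.

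\emph{Step 2 (evaluating a scalar multiple).} Next I would treat the case in which the left factor is a constant $c\in\mathbb{F}^{*}$. From the right division $f=q(x)(x-a^{c})+f(a^{c})$, multiplying on the right by $c$ and applying the first identity of Step 1 gives $f\cdot c=(q(x)\sigma(c))(x-a)+f(a^{c})c$; since $f(a^{c})c\in\mathbb{F}$ has degree $0$, the uniqueness clause in the right division algorithm forces $(f\cdot c)(a)=f(a^{c})\,c$. In exactly the same way, starting from the left division $f=(x-{}^{c}a)q(x)+f_{L}({}^{c}a)$ and using the second identity of Step 1, one obtains $(c\cdot f)_{L}(a)=c\,f_{L}({}^{c}a)$.

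\emph{Step 3 (the general case).} For part $(1)$, write the right division $g=q_{1}(x)(x-a)+g(a)$, so that $f\cdot g=(fq_{1})(x)\,(x-a)+f\cdot g(a)$. If $g(a)=0$, then $x-a$ is a right divisor of $fg$, hence $(f\cdot g)(a)=0$. If $g(a)\neq 0$, then $(fq_{1})(x)(x-a)$ contributes nothing to the remainder of $fg$ upon right division by $x-a$, so $(f\cdot g)(a)=(f\cdot g(a))(a)=f(a^{g(a)})\,g(a)$ by Step 2 applied with $c=g(a)$. Part $(2)$ is proved identically, replacing right division, right evaluation and $a^{c}$ by left division, left evaluation and ${}^{c}a$ respectively, and invoking that $\sigma$ is an automorphism so that left division by $x-a$ is defined. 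I do not expect a genuine obstacle here; the only place demanding care is Step 1, namely verifying that the scalar obtained when $c$ is slid past $x-a$ is precisely the $(\sigma,\delta)$-conjugate of Definition \ref{conj} (and, in the left-handed version, that $\delta(\sigma^{-1}(c))$ is handled correctly). Once these two identities are established, the theorem follows purely formally from the uniqueness of the quotient and remainder in $\mathcal{R}$.
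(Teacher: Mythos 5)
Your proof is correct. Note that the paper itself does not write out an argument here: it simply defers to Theorem 2.7 of Lam--Leroy and ``a slight modification of its proof,'' so your proposal supplies the details rather than competing with an alternative in the text. The route you take --- the commutation identities $(x-a^{c})c=\sigma(c)(x-a)$ and $c(x-{}^{c}a)=(x-a)\sigma^{-1}(c)$, followed by uniqueness of quotient and remainder --- is exactly the standard Lam--Leroy mechanism, and all three steps check out: the two identities in Step~1 are verified correctly against Definition \ref{conj} and the rule \eqref{1.1}, Step~2 correctly uses that the division algorithm (right division always, left division because $\sigma$ is an automorphism) has a unique remainder, and Step~3 correctly reduces $(f\cdot g)(a)$ to $(f\cdot g(a))(a)$ with $c=g(a)\neq 0$. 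One cosmetic slip: in Step~2 you announce ``the case in which the left factor is a constant,'' but what you then (correctly) treat for part $(1)$ is $f\cdot c$ with the constant on the \emph{right}; the left-constant case $c\cdot f$ is the one needed for part $(2)$. The mathematics is unaffected.
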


\begin{proof}
The statements $1)$ and $2)$ follows directly from Theorem 2.7 of \cite{lamandleroy} and a slight modification of its proof. 
\end{proof}

\section{Resultants of skew polynomials}

Unless otherwise stated, $\mathbb{F}$ will denote a division ring. Given $f,g\in \mathbb{F}[x;\sigma,0]$ non-constant skew polynomials, it is known that $f$ and $g$ have a common (non-unit) right factor in $\mathbb{F}[x;\sigma,0]$ if and only if $R(f,g)=0$, where $R(f,g)$ denotes the \textit{resultant} of $f$ and $g$ over $\mathbb{F}$ (see \cite[Theorem 2.5]{ResEric}).
The main goal of this section will be to extend this result to the case $\delta \neq 0$ and to show further equivalent conditions.

\subsection{Right $(\sigma,\delta)$-Resultant}\label{RRes}
We begin by proving two technical results which are useful to define the so called right $(\sigma,\delta)$-resultant of two skew polynomials in $\mathcal{R}$ (see Definition \ref{Res1.3}). 

\begin{lemma} \label{Res1.1}
Let $f,g\in \mathcal{R}$ be non-constant skew polynomials. The following hold:
\begin{itemize}
    \item[$1)$] $\mathcal{R}/\mathcal{R}f$ is a left $\mathbb{F}$-module and ${\dim}\;\mathcal{R}/\mathcal{R}f={\deg(f)}$;
    \item[$2)$] if $\mathcal{R}g\subseteq\mathcal{R}f$, then $\mathcal{R}f/\mathcal{R}g$ is a left $\mathbb{F}$-module and ${\dim}\;\mathcal{R}f/\mathcal{R}g={\deg(g)-\deg(f)}$;
    \item[$3)$] if $k, h \in \mathcal{R}$ are such that $\mathcal{R}f \cap \mathcal{R}g = \mathcal{R}h$ and $\mathcal{R}f + \mathcal{R}g = \mathcal{R}k,$ then $$\deg(f) + \deg(g) = \deg(h) + \deg(k)\ .$$
\end{itemize}
\end{lemma}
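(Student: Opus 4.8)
The plan is to establish all three parts using the right-division Euclidean algorithm, which is available in $\mathcal{R}$ by property 2 in the list of basic properties. For part 1), I would observe that every left coset of $\mathcal{R}f$ in $\mathcal{R}$ has a unique representative $r$ with $\deg(r)<\deg(f)$ or $r=0$: existence follows by writing an arbitrary $p\in\mathcal{R}$ as $p=qf+r$ via right division, and uniqueness follows because the difference of two such representatives would lie in $\mathcal{R}f$ yet have degree $<\deg(f)$, forcing it to be $0$ (using that $\deg(af)=\deg(a)+\deg(f)$, hence $\mathcal{R}f$ contains no nonzero element of degree $<\deg(f)$). This identifies $\mathcal{R}/\mathcal{R}f$ as a left $\mathbb{F}$-module with basis (the classes of) $1,x,\dots,x^{\deg(f)-1}$, so its dimension is $\deg(f)$. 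I would also note $\mathcal{R}/\mathcal{R}f$ is genuinely a \emph{left} module since $\mathcal{R}f$ is a left ideal.

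For part 2), assuming $\mathcal{R}g\subseteq\mathcal{R}f$, I would first note this forces $f$ to be a right divisor of $g$, say $g=uf$ with $u\in\mathcal{R}$, so $\deg(g)\ge\deg(f)$ and $\deg(u)=\deg(g)-\deg(f)$. Then $\mathcal{R}f/\mathcal{R}g=\mathcal{R}f/\mathcal{R}uf$, and right multiplication by $f$ gives a left $\mathbb{F}$-module isomorphism $\mathcal{R}/\mathcal{R}u\to\mathcal{R}f/\mathcal{R}uf$ (it is well-defined, surjective, and injective because $af\in\mathcal{R}uf$ iff $a\in\mathcal{R}u$, again by the degree formula and absence of zero divisors). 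Applying part 1) to $u$ gives $\dim\mathcal{R}f/\mathcal{R}g=\deg(u)=\deg(g)-\deg(f)$.

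For part 3), I would use the short exact sequence of left $\mathbb{F}$-modules
\begin{equation*}
0\longrightarrow \mathcal{R}/(\mathcal{R}f\cap\mathcal{R}g)\longrightarrow \mathcal{R}/\mathcal{R}f\oplus\mathcal{R}/\mathcal{R}g\longrightarrow \mathcal{R}/(\mathcal{R}f+\mathcal{R}g)\longrightarrow 0
\end{equation*}
where the first map is $p\mapsto(p,p)$ and the second is $(p,q)\mapsto p-q$; exactness is the usual diagram chase. Taking dimensions (all finite by parts 1) and the hypotheses $\mathcal{R}f\cap\mathcal{R}g=\mathcal{R}h$, $\mathcal{R}f+\mathcal{R}g=\mathcal{R}k$, which are left ideals hence of the stated principal form with $h,k$ of positive degree — here one should check $k\ne0$, which holds as $f\ne0$ lies in $\mathcal{R}k$, and $h\ne0$ as $\mathcal{R}f\cap\mathcal{R}g\supseteq\mathcal{R}gf\ne0$), part 1) yields $\deg(h)=\deg(f)+\deg(g)-\deg(k)$, i.e. the claimed identity. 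Alternatively, and perhaps more in keeping with an elementary exposition, I could derive part 3) from part 2): since $\mathcal{R}h\subseteq\mathcal{R}f$ and $\mathcal{R}h\subseteq\mathcal{R}g$, and $\mathcal{R}f,\mathcal{R}g\subseteq\mathcal{R}k$, write $\dim\mathcal{R}/\mathcal{R}h=\dim\mathcal{R}/\mathcal{R}f+\dim\mathcal{R}f/\mathcal{R}h$ and similarly through $k$, then compare.

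The main obstacle is not any single computation but making sure the one-sidedness is handled consistently throughout: because $\sigma$ need not be an automorphism, $\mathcal{R}$ is only a \emph{left} PID, so every ideal appearing must be a left ideal, every quotient a left $\mathbb{F}$-module, and the isomorphism in part 2) must be right multiplication (which is a left $\mathbb{F}$-module map) rather than left multiplication. The key facts that make everything go through are the degree formula $\deg(ab)=\deg(a)+\deg(b)$ and the absence of zero divisors, both recorded in property 1 of the basic properties list, together with the right-division algorithm; once these are invoked carefully, parts 1)–3) are routine.
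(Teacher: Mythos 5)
Your proposal is correct and follows essentially the same route as the paper: unique remainders under right division give the monomial basis in part 1), right multiplication by $f$ gives the isomorphism $\mathcal{R}/\mathcal{R}u\cong\mathcal{R}f/\mathcal{R}uf$ in part 2), and part 3) is a dimension count. The only cosmetic difference is that you package part 3) as a four-term exact sequence where the paper invokes the second isomorphism theorem $\mathcal{R}k/\mathcal{R}f\cong\mathcal{R}g/\mathcal{R}h$ together with part 2); these are equivalent bookkeeping.
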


\begin{proof} 
$1)$ Defining in $\mathcal{R}/\mathcal{R}f:=\{p+\mathcal{R}f:p\in \mathcal{R}\}$ the usual operations of addition and scalar multiplication (on the left) given by 
    $ (p_1+\mathcal{R}f) + (p_2+\mathcal{R}f): = (p_1 + p_2)+\mathcal{R}f $ and $ \alpha (p_1+\mathcal{R}f): = \alpha p_1+\mathcal{R}f $, for all $p_1,p_2\in \mathcal{R}$ and $\alpha \in \mathbb{F}$, one can see easily that $\mathcal{R}/\mathcal{R}f$ is a left $\mathbb{F}$-module. Since every coset in $\mathcal{R}/\mathcal{R}f$ contains a unique representative of degree less than $\deg(f)$, it follows that $B:=\{1+\mathcal{R}f, x+\mathcal{R}f, x^2+\mathcal{R}f, ..., x^{\deg(f)-1}+\mathcal{R}f \}$ is a left basis for $\mathcal{R}/\mathcal{R}f$. Therefore, $\dim \mathcal{R}/\mathcal{R}f=\deg(f)$.

$2)$ Since $g\in \mathcal{R}g\subseteq \mathcal{R}f$, we have $g=hf$ for some $h\in \mathcal{R}$. Thus, we can write $Rf/Rg=\{rf+Rhf:r\in R\}$. On the other hand, since $\psi:\mathcal{R}\to \mathcal{R}f/\mathcal{R}hf$, $p \mapsto pf+\mathcal{R}hf$ is a surjective left $\mathbb{F}$-module homomorphism  with $\ker\psi=\mathcal{R}h$, we have $\mathcal{R}/\mathcal{R}h \cong \mathcal{R}f/\mathcal{R}hf=\mathcal{R}f/\mathcal{R}g$. Finally, by $1)$ it follows that $\dim \mathcal{R}f/\mathcal{R}g=\dim \mathcal{R}/\mathcal{R}h=\deg(h)=\deg(g)-\deg(f)$

$3)$ Since $\mathcal{R}$ is a LPID, we can write $\mathcal{R}f \cap \mathcal{R}g = \mathcal{R}h$ and $\mathcal{R}f + \mathcal{R}g = \mathcal{R}k$ for some $h,k\in \mathcal{R}$. Since $Rf,Rg,Rh$ and $Rk$ are left $\mathbb{F}$-submodules of $\mathcal{R}$, we deduce that
    $(Rf+Rg)/Rf\cong Rg/(Rf \cap Rg)$, i.e. $ Rk/Rf \cong Rg/Rh$.
Hence $\dim Rk/Rf=\dim Rg/Rh $ and by $2)$ we have $\deg(f) + \deg(g) = \deg(h) + \deg(k)$.
\end{proof}

The previous lemma is an extension of some results showed in \cite[p.~4]{ResEric}. We have given here its proof only for convenience of the reader. Moreover, by Lemma \ref{Res1.1} it is possible to prove also the following technical result, but we omit its proof because it is analogous to the one presented in \cite[Theorem 2.4]{ResEric} for the case $\delta=0$.
 
\begin{lemma}\label{Res1.2}
 Two non-constant skew polynomials $f,g\in \mathcal{R}$ of respective degrees $m$ and $n$, have a common (non-unit) right factor in $\mathcal{R}$, if and only if there exist skew polynomials $c,d \in \mathcal{R}$ such that $cf+dg=0$, $\deg(c) < n$ and $\deg(d)< m$. 
\end{lemma}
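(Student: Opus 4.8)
The plan is to recast the statement in terms of left ideals of $\mathcal{R}$ and then read everything off from the degree identity of Lemma \ref{Res1.1}(3). Since $\mathcal{R}$ is a LPID I would write $\mathcal{R}f+\mathcal{R}g=\mathcal{R}h$ and $\mathcal{R}f\cap\mathcal{R}g=\mathcal{R}\ell$, so that $h$ is a greatest common right divisor and $\ell$ a least common left multiple of $f$ and $g$. Here $h\neq 0$ (it right-divides $f\neq 0$), and $\ell\neq 0$: otherwise $\mathcal{R}f\cap\mathcal{R}g=0$ would give $\mathcal{R}g\cong(\mathcal{R}f+\mathcal{R}g)/\mathcal{R}f$, a left $\mathbb{F}$-subspace of the finite-dimensional module $\mathcal{R}/\mathcal{R}f$ of Lemma \ref{Res1.1}(1), contradicting the $\mathbb{F}$-linear independence of $g,xg,x^2g,\dots$ The first thing I would record is that $f$ and $g$ admit a common non-unit right factor exactly when $\deg h\geq 1$: any common right factor $p$ satisfies $\mathcal{R}h=\mathcal{R}f+\mathcal{R}g\subseteq\mathcal{R}p$, hence $\deg p\leq\deg h$, while $h$ itself is such a factor because $f,g\in\mathcal{R}h$. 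Together with Lemma \ref{Res1.1}(3), which reads $\deg h+\deg\ell=m+n$, this reduces the lemma to: $f,g$ have a common non-unit right factor if and only if $\deg\ell\leq m+n-1$.

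For the forward implication, assuming such a common factor exists I would use $\ell$ directly: from $\mathcal{R}\ell\subseteq\mathcal{R}f$ and $\mathcal{R}\ell\subseteq\mathcal{R}g$, write $\ell=cf=qg$ with $c,q\in\mathcal{R}$; since $\ell\neq 0$ and the degree is additive on the domain $\mathcal{R}$, we get $c,q\neq 0$, $\deg c=\deg\ell-m\leq n-1<n$ and $\deg q=\deg\ell-n\leq m-1<m$. Then $d:=-q$ yields $cf+dg=0$ with the required bounds. For the converse, suppose $cf+dg=0$ with $\deg c<n$, $\deg d<m$, and $c,d$ not both zero (the intended reading of the hypothesis); since $\mathcal{R}$ has no zero divisors and $g\neq 0$, one sees at once that $c\neq 0$ and $d\neq 0$. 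Put $w:=cf=-dg\neq 0$; then $w\in\mathcal{R}f\cap\mathcal{R}g=\mathcal{R}\ell$, so $\deg\ell\leq\deg w=\deg c+m\leq m+n-1$, and hence $\deg h=m+n-\deg\ell\geq 1$. Thus $h$ is a common non-unit right factor of $f$ and $g$.

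I do not anticipate a genuine obstacle; the argument is essentially a translation, and the only point needing care is the bookkeeping — verifying $\mathcal{R}f\cap\mathcal{R}g\neq 0$ so that $\ell$ and Lemma \ref{Res1.1}(3) are meaningful, and tracking degrees through the non-commutative factorizations $\ell=cf=qg$. I would also flag explicitly that the hypothesis must be read with $c,d$ not both zero, since otherwise the ``if'' direction already fails over a commutative field (take $f,g$ coprime).
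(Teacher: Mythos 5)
Your argument is correct and is essentially the proof the paper has in mind: the paper omits its own proof, deferring to the case $\delta=0$ in \cite[Theorem 2.4]{ResEric}, and that proof is exactly your reduction via $\mathcal{R}f+\mathcal{R}g=\mathcal{R}h$, $\mathcal{R}f\cap\mathcal{R}g=\mathcal{R}\ell$ and the degree identity of Lemma \ref{Res1.1}(3). Your two added observations --- that $\mathcal{R}f\cap\mathcal{R}g\neq 0$ must be checked for Lemma \ref{Res1.1}(3) to apply, and that the hypothesis must be read with $c,d$ not both zero --- are legitimate points of care rather than deviations.
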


By Lemma \ref{Res1.2} we can define the right $(\sigma,\delta)$-resultant of two skew polynomials in $\mathcal{R}$ as shown below. Let 
\begin{align*}
f &= a_mx^m+...+a_1x+a_0, a_m \neq 0 \;,\;\;\;\; g=b_nx^n+...+b_1x+b_0, b_n \neq 0\;,\\
 c &= c_{n-1}x^{n-1}+...+c_1x+c_0 \;,\;\;\;\;\;\;\;\;\;\;\;\;d= d_{m-1}x^{m-1}+...+d_1x+d_0
\end{align*}
 be skew  polynomials as in Lemma \ref{Res1.2}. By \eqref{For1.3}, we have
$$cf= \sum_{i=0}^{n-1}\sum_{j=0}^{m}\left(\sum_{k=0}^{i} c_i\cdot \mathcal{C}_{k,i-k}(a_j)x^{i+j-k} \right), \; dg=\sum_{i=0}^{m-1}\sum_{j=0}^{n}\left(\sum_{k=0}^{i} d_i\cdot \mathcal{C}_{k,i-k}(b_j)x^{i+j-k} \right)$$
Keeping in mind that two skew polynomials are equal if and only if they have the same degree and their respective coefficients are equal, the equation $cf+dg=0$ of Lemma \ref{Res1.2} gives a system of $m + n$ linear equations with $m + n$ unknowns $c_0,..., c_{n-1}, d_0,..., d_{m-1}$, that is \begin{equation} \label{For3.1}
(c_0, ..., c_{n-1}, d_0, ..., d_{m-1})\cdot A=(0,...,0)\;,
\end{equation}
where $A$ is the following $(m+n)\times(m+n)$ matrix:

\begin{center}
\resizebox{16.5 cm}{4 cm}{
\begin{tabular}{c} 
$A:=\begin{pmatrix}
a_0& a_1 & a_2 & \cdots & a_m & 0 & 0 & 0& \cdots  & 0 \\ 
\mathcal{C}_{1,0}(a_0)& \displaystyle\sum_{i=0}^{1}\mathcal{C}_{1-i,i}(a_{1-i}) &  \displaystyle\sum_{i=0}^{1}\mathcal{C}_{1-i,i}(a_{2-i}) & \cdots & \displaystyle\sum_{i=0}^{1}\mathcal{C}_{1-i,i}(a_{m-i}) & \mathcal{C}_{0,1}(a_m)  & 0& 0 &\cdots & 0 \\ 
\mathcal{C}_{2,0}(a_0) &  \displaystyle\sum_{i=0}^{1}\mathcal{C}_{2-i,i}(a_{1-i}) & \displaystyle\sum_{i=0}^{2}\mathcal{C}_{2-i,i}(a_{2-i}) & \cdots & \displaystyle\sum_{i=0}^{2}\mathcal{C}_{2-i,i}(a_{m-i})& \displaystyle\sum_{i=1}^{2}\mathcal{C}_{2-i,i}(a_{m+1-i}) & \mathcal{C}_{0,2}(a_m) & 0 & \cdots &0 \\ 
\mathcal{C}_{3,0}(a_0) &  \displaystyle\sum_{i=0}^{1}\mathcal{C}_{3-i,i}(a_{1-i}) & \displaystyle\sum_{i=0}^{2}\mathcal{C}_{3-i,i}(a_{2-i}) & \cdots & \displaystyle\sum_{i=0}^{3}\mathcal{C}_{3-i,i}(a_{m-i})& \displaystyle\sum_{i=1}^{3}\mathcal{C}_{3-i,i}(a_{m+1-i}) & \displaystyle\sum_{i=2}^{3}\mathcal{C}_{3-i,i}(a_{m+2-i}) &  \mathcal{C}_{0,3}(a_m) & \cdots &0 \\ 
\vdots & \vdots & \vdots & \cdots  & \vdots & \vdots  & \vdots & \ddots  & \ddots& \vdots\\ 
\mathcal{C}_{n-1,0}(a_0) & \displaystyle\sum_{i=0}^{1}\mathcal{C}_{n-1-i,i}(a_{1-i}) & \displaystyle\sum_{i=0}^{2}\mathcal{C}_{n-1-i,i}(a_{2-i}) & \cdots & \displaystyle\sum_{i=0}^{m}\mathcal{C}_{n-1-i,i}(a_{m-i})& \displaystyle\sum_{i=1}^{n-1}\mathcal{C}_{n-1-i,i}(a_{m+1-i}) & \displaystyle\sum_{i=2}^{n-1}\mathcal{C}_{n-1-i,i}(a_{m+2-i}) & \displaystyle\sum_{i=3}^{n-1}\mathcal{C}_{n-1-i,i}(a_{m+3-i}) & \cdots & \mathcal{C}_{0,n-1}(a_m) \\
b_0& b_1 & b_2 & \cdots & b_n & 0 & 0 & 0& \cdots  & 0 \\ 
\mathcal{C}_{1,0}(b_0)& \displaystyle\sum_{i=0}^{1}\mathcal{C}_{1-i,i}(b_{1-i}) &  \displaystyle\sum_{i=0}^{1}\mathcal{C}_{1-i,i}(b_{2-i}) & \cdots & \displaystyle\sum_{i=0}^{1}\mathcal{C}_{1-i,i}(b_{n-i}) & \mathcal{C}_{0,1}(b_n)  & 0& 0 &\cdots & 0 \\ 
\mathcal{C}_{2,0}(b_0) &  \displaystyle\sum_{i=0}^{1}\mathcal{C}_{2-i,i}(b_{1-i}) & \displaystyle\sum_{i=0}^{2}\mathcal{C}_{2-i,i}(b_{2-i}) & \cdots & \displaystyle\sum_{i=0}^{2}\mathcal{C}_{2-i,i}(b_{n-i})& \displaystyle\sum_{i=1}^{2}\mathcal{C}_{2-i,i}(b_{n+1-i}) & \mathcal{C}_{0,2}(b_n) & 0 & \cdots &0 \\ \mathcal{C}_{3,0}(b_0) &  \displaystyle\sum_{i=0}^{1}\mathcal{C}_{3-i,i}(b_{1-i}) & \displaystyle\sum_{i=0}^{2}\mathcal{C}_{3-i,i}(b_{2-i}) & \cdots & \displaystyle\sum_{i=0}^{3}\mathcal{C}_{3-i,i}(b_{n-i})& \displaystyle\sum_{i=1}^{3}\mathcal{C}_{3-i,i}(b_{n+1-i}) & \displaystyle\sum_{i=2}^{3}\mathcal{C}_{3-i,i}(b_{n+2-i}) &  \mathcal{C}_{0,3}(b_n) & \cdots &0 \\ 
\vdots & \vdots & \vdots & \cdots  & \vdots & \vdots  & \vdots & \ddots  & \ddots& \vdots\\ 
\mathcal{C}_{m-1,0}(b_0) & \displaystyle\sum_{i=0}^{1}\mathcal{C}_{m-1-i,i}(b_{1-i}) & \displaystyle\sum_{i=0}^{2}\mathcal{C}_{m-1-i,i}(b_{2-i}) & \cdots & \displaystyle\sum_{i=0}^{n}\mathcal{C}_{m-1-i,i}(b_{n-i})& \displaystyle\sum_{i=1}^{m-1}\mathcal{C}_{m-1-i,i}(b_{n+1-i}) & \displaystyle\sum_{i=2}^{m-1}\mathcal{C}_{m-1-i,i}(b_{n+2-i}) & \displaystyle\sum_{i=3}^{m-1}\mathcal{C}_{m-1-i,i}(b_{n+3-i}) & \cdots & \mathcal{C}_{0,m-1}(b_n)
\end{pmatrix}$ 
\end{tabular}
}
\end{center} 

\noindent Note that the first $n$ rows involve the $a_i$'s and the last $m$ rows involve the $b_j$'s. \medskip

By the previous $(m+n)\times(m+n)$ matrix $A$, we can define the right $(\sigma,\delta)$-resultant of two skew polynomials in $\mathcal{R}$ as follows.

\begin{definition}\label{Res1.3}
Let $f,g\in\mathcal{R}$ be skew polynomials of non-negative degrees $m$ and $n$, respectively. The above matrix $A$ will be called the \textit{right $(\sigma,\delta)$-Sylvester matrix} of $f$ and $g$, which we denote by $\text{Sylv}_{\mathbb{F}}^{\sigma,\delta}(f,g)$. We define the \textit{right $(\sigma,\delta)$-\textit{resultant}} of $f$ and $g$ (over $\mathbb{F}$), denoted by $R^{\sigma,\delta}_{\mathbb{F}}(f,g)$, as the Dieudonn\'e determinant of $\text{Sylv}_{\mathbb{F}}^{\sigma,\delta}(f,g)$.
\end{definition}

Let us recall that the Dieudonn\'e determinant, denoted here by Ddet, is a non-commutative generalization of the classical determinant of a matrix with entries in a field, to matrices over division rings. This determinant takes
values in $\{0\}\cup\mathbb{F}^{*}/[\mathbb{F}^{*},\mathbb{F}^{*}]$, where $[\mathbb{F}^{*},\mathbb{F}^{*}]$ is the commutator of
the multiplicative group $\mathbb{F}^{*}:=\mathbb{F}\setminus \{0\}$. If $\mathbb{F}$ is a field, then Ddet coincides with the classical definition of determinant and in this case, we will write simply $\det$ instead of Ddet. For more details on the properties of
the Dieudonn\'e determinant, see e.g \cite{dieudone},  \cite[p.~151]{artin} and \cite[p.~133]{draxl}.

\medskip

\begin{remark}
In the special case when $\delta=0$, $R^{\sigma, \delta}_{\mathbb{F}}(f,g)$ coincides with the resultant $R(f,g)$ defined in \cite[p.~6]{ResEric}. In fact, by \eqref{For4} we have 
$$R^{\sigma,0}_{\mathbb{F}}(f,g)=\text{Ddet} \begin{pmatrix}
a_{0} & a_{1} & a_2 & \cdots  & a_m & 0 & \cdots & 0\\
0 & \sigma(a_{0}) & \sigma(a_{1})  & \cdots & \sigma(a_{m-1}) & \sigma(a_m) & \cdots & 0\\
\vdots & \vdots & \ddots  & \vdots & \vdots & \vdots & \ddots & \vdots &\\
0 & 0 & 0 & \cdots & \sigma^{n-1}(a_0) & \sigma^{n-1}(a_{1}) & \cdots & \sigma^{n-1}(a_m) \\
b_{0} & b_{1} & b_2 & \cdots  & b_n & 0 & \cdots & 0\\
0 & \sigma(b_{0}) & \sigma(b_{1})  & \cdots & \sigma(b_{n-1}) & \sigma(b_n) & \cdots & 0\\
\vdots & \vdots & \ddots  & \vdots & \vdots & \vdots & \ddots & \vdots &\\
0 & 0 & 0 & \cdots & \sigma^{m-1}(b_0) & \sigma^{m-1}(b_{1}) & \cdots & \sigma^{m-1}(b_n) 
\end{pmatrix}$$
Furthermore, if $\sigma={Id}$ then we obtain the classical notion of resultant.
\end{remark}
 
Applying Algorithm \ref{Alg1}, the next algorithm shows how to find the right $(\sigma,\delta)$-Silvester matrix of $f$ and $g$ (see Definition \ref{Res1.3}). 

\begin{algorithm}[h!]
\small
\caption{\label{algoritmo 3} Computation of the right $(\sigma,\delta)$-Sylvester matrix of $f(x)=a_0+a_1x+\cdots+a_mx^m$ and $g(x)=b_0+b_1x+\cdots+b_nx^n$.}
\label{Alg3}
\begin{algorithmic}[1]
\Require{$f,g\in\mathcal{R}$.}
\Ensure{$(\sigma,\delta)$-Sylvester matrix $M$ of $f$ and $g$.}
\State{$M_1\gets \begin{pmatrix}  a_{0}  &   a_{1} &a_2 & \cdots & a_{n+m} \end{pmatrix}$ }
\State{$M_2\gets \begin{pmatrix}  b_{0}  &   b_{1}&b_2 & \cdots & b_{n+m} \end{pmatrix}$ }
\For{$p\gets 1$ to $n-1$}
\State{$M_3\gets \begin{pmatrix}  \mathcal{C}_{p,0}(a_{0}) \end{pmatrix}$}
\For{$q\gets 1$ to $n+m-1$}
\State{$Z_1\gets 0$}
\For{$l\gets 0$ to $p$}
\If{$0\leq q-l\leq m$}
\State{$Z_1 \gets Z_1 + \mathcal{C}_{p-l,l}(a_{q-l})$}
\EndIf
 \EndFor
\State{$M_3\gets \left( \begin{array}{c|c} M_3 & Z_1 \end{array}\right)$}
\EndFor
\State{$M_1\gets \left( \begin{array}{c} M_1 \\ \hline    M_3 \end{array}\right)$}
\EndFor 
\For{$p\gets 1$ to $m-1$}
\State{$M_4\gets \begin{pmatrix}  \mathcal{C}_{p,0}(b_{0}) \end{pmatrix}$}
\For{$q\gets 1$ to $n+m-1$}
\State{$Z_2\gets 0$}

\algstore{myalg}
\end{algorithmic}
\end{algorithm}

\begin{algorithm}[h!]                     
\begin{algorithmic} [1]                  
\algrestore{myalg}

\For{$l\gets 0$ to $p$}
\If{$0\leq q-l\leq n$}
\State{$Z_2 \gets Z_2 + \mathcal{C}_{p-l,l}(b_{q-l})$}
\EndIf
\EndFor
\State{$M_4\gets \left( \begin{array}{c|c} M_4 & Z_2 \end{array}\right)$}
\EndFor
\State{$M_2\gets \left( \begin{array}{c} M_2\\ \hline   M_4 \end{array}\right)$}
\EndFor \\ 
\Return{$M\gets \left( \begin{array}{c} M_1 \\ \hline   M_2 \end{array}\right)$}
\end{algorithmic}
\end{algorithm}

\bigskip

As an application of Algorithm \ref{Alg3}, let us give here the following Magma program to compute $\text{Sylv}_{\mathbb{H}}^{\sigma,\delta}(f,g)$ when $f=x^4+kx^3-jx^2-i$ and $g=x^3+j$ are skew polynomials in $\mathbb{H}[x;\sigma,0]$ with $\sigma(h):=ihi^{-1}$ for all $h\in \mathbb{H}$.
\medskip
\begin{verbatim}
F<i,j,k> := QuaternionAlgebra< RealField() | -1, -1 >;
S:= map< F -> F | x :-> i*x*(1/i) >;
D:= map< F -> F | x :-> 0 >;
\end{verbatim}

\noindent Then, using the function ``PosCom" defined in Program \ref{prog 1}, we can define the new function ``SylvesterMatrix" with previously the function ``SumPosCom" as follows.
\begin{prog} \label{Program:Sylvester} ~
\begin{verbatim}
SumPosCom:=function(f,i,j)
AA:=0;
n:=#f-1;
 for I in [0..i-1] do
  if j-1-I ge 0 and j-1-I le n then
   if i-1 ne 0 then
    AA:=AA+PosCom(i-1-I,I,f[j-I]);
    else
    AA:=f[j-I];
   end if;
  end if;
 end for;
return AA;
end function;

SylvesterMatrix:=function(f,g)
n:=#f-1;
m:=#g-1;
if m ne 0 then
 M1:= Matrix(F,1,n+m,[SumPosCom(f,s,t): s in {1}, t in {1..n+m}]);
  for p in [2..m] do
   X:=Matrix(F,1,n+m,[SumPosCom(f,s,t): s in {p}, t in {1..n+m}]);
   M1:=VerticalJoin(M1,X);
  end for;
else
 M1:=RemoveRow(ZeroMatrix(F,1,n+m),1);
end if;
if n ne 0 then
 M2:= Matrix(F,1,n+m,[SumPosCom(g,s,t): s in {1}, t in {1..n+m}]);
  for p in [2..n] do
   X:=Matrix(F,1,n+m,[SumPosCom(g,s,t): s in {p}, t in {1..n+m}]);
   M2:=VerticalJoin(M2,X);
  end for;
else
 M2:=RemoveRow(ZeroMatrix(F,1,n+m),1);
end if;
M:=VerticalJoin(M1,M2);
return M;
end function;
\end{verbatim}
\end{prog}
Then, by typing in Magma
\begin{verbatim}
SylvesterMatrix([-i,-j,0,k,1],[j,0,0,1]);
\end{verbatim}
\noindent we obtain the right $(\sigma,\delta)$-Sylvester matrix of $f(x)=x^4+kx^3-jx-i$ and $g(x)=x^3+j$:
\begin{equation} \label{M}
{\tiny
\left(
\begin{matrix}
-i & -j & 0 & k & 1 & 0 & 0 \\
 0 & -i & j & 0 & -k & 1 & 0 \\
 0 & 0 & -i & -j & 0 & k & 1 \\
 j & 0 & 0 & 1 & 0 & 0 & 0 \\
 0 & -j & 0 & 0 & 1 & 0 & 0 \\ 
 0 & 0 & j & 0 & 0 & 1 & 0 \\ 
 0 & 0 & 0 & -j & 0 & 0 & 1
\end{matrix}\right)\ .
}
\end{equation}

\begin{remark}
When $\mathbb{F}$ is a field, we can write $\text{R}_{\mathbb{F}}^{\sigma,\delta}(f,g):=\det(\text{Sylv}_{\mathbb{F}}^{\sigma,\delta}(f,g))$, where $\det$ is the classical determinant. Therefore, in this case, we can easily compute $\text{R}_{\mathbb{F}}^{\sigma,\delta}(f,g)$ in Magma by using the command ``Determinant(~)''. However, this command in Magma generates difficulties in some situations. For example, when $\mathbb{F}$ is the field of the complex numbers, this field can only be dealt with a certain level of precision, and therefore  Magma cannot give the exact value of the determinant. For this reason, we provide below a Magma program (Program \ref{Program:Sarrus}) based on 
the definition of the determinant $\det A$ of an $n\times n$ matrix $A$ with entries $a_{ij}\in\mathbb{F}$ using the Leibniz's formula, i.e.
$$\det A:=\sum_{\Sigma\in S_n}\left(\mathrm{sgn}(\Sigma)a_{1,\Sigma_1}\dots a_{n,\Sigma_n} \right) \ ,$$ 
where $S_n$ is the symmetric group of $n$ elements, sgn$(\Sigma)$ is the sign of the permutation $\Sigma\in S_n$ and $\Sigma_i$ is the value in the $i$-th position after the reordering $\Sigma$.  
The advantage of this Magma program is that it avoids the Gaussian elimination and consequently the computation of quotients, because it only works with sums and products. 

\newpage

\begin{prog} \label{Program:Sarrus} ~
\begin{verbatim}
Det:=function(M)
n:=NumberOfColumns(M);
P:=[ p : p in Permutations({a : a in [1..n]})];
S2:=0;
 for k in [1..#P] do
  S1:=1;
  for j in [1..n] do
   S1:=S1*M[j,P[k][j]];
  end for;
  g:=Sym(n)!P[k];
  if IsEven(g) then
   S2:=S2+S1;
   else 
   S2:=S2-S1;
  end if;
 end for;
return S2;
end function;
\end{verbatim}
\end{prog}
\end{remark}

\bigskip

Now, let us give here the main results of this section for polynomials in $\mathcal{R}$.

\begin{theorem}\label{Res1.5}
Let $f,g\in \mathcal{R}$ be non-constant skew polynomials of degrees $m$ and $n$, respectively. The following conditions are equivalent:
\begin{itemize}
    \item[$1)$]$\text{R}^{\sigma,\delta}_{\mathbb{F}}(f,g)=0$;
    \item[$2)$] $f$ and $g$ have a common (non-unit) right factor in $\mathcal{R}$;
    \item[$3)$] $\text{gcrd}(f,g)\neq 1$ (where "gcrd" means greatest  common  right  divisor);
    \item[$4)$] there are no polynomials $p,q \in \mathcal{R}$ such that $pf + qg = 1$;
    \item[$5)$] $\mathcal{R}f+\mathcal{R}g\subsetneq \mathcal{R}$.
\end{itemize}
\end{theorem}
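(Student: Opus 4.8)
The plan is to split the five conditions into an \emph{arithmetic block} $2)\Leftrightarrow 3)\Leftrightarrow 4)\Leftrightarrow 5)$, which follows formally from the fact that $\mathcal{R}$ is a left principal ideal domain without zero divisors, and the genuinely new equivalence $1)\Leftrightarrow 2)$, which links the Dieudonn\'e determinant of the Sylvester matrix to Lemma \ref{Res1.2}. I would establish the arithmetic block first and then $1)\Leftrightarrow 2)$, which closes all the implications.

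For the arithmetic block I would write $\mathcal{R}f+\mathcal{R}g=\mathcal{R}k$ for some $k\in\mathcal{R}$ (possible since $\mathcal{R}$ is an LPID, and $k\neq 0$ because $f\neq 0$). Then $k$ is a right divisor of both $f$ and $g$ — from $\mathcal{R}f\subseteq\mathcal{R}k$ one gets $f\in\mathcal{R}k$ — and any common right divisor $d$ of $f$ and $g$ satisfies $\mathcal{R}k=\mathcal{R}f+\mathcal{R}g\subseteq\mathcal{R}d$, hence right-divides $k$; so $k=\mathrm{gcrd}(f,g)$ up to a left unit. Since $\deg(uv)=\deg(u)+\deg(v)$ in $\mathcal{R}$, the units of $\mathcal{R}$ are exactly $\mathbb{F}^{*}$, so ``$k$ is a unit'' $\Leftrightarrow$ $\mathcal{R}k=\mathcal{R}$ $\Leftrightarrow$ $\deg k=0$. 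From this I get $5)\Leftrightarrow 3)$ by negating ``$k$ is a unit'', $5)\Leftrightarrow 4)$ because a left ideal of $\mathcal{R}$ equals $\mathcal{R}$ precisely when it contains $1$ (i.e.\ when $pf+qg=1$ for some $p,q$), and $3)\Leftrightarrow 2)$ because every common right factor of $f$ and $g$ of positive degree right-divides $k$ and thus forces $\deg k\geq 1$, while conversely $k$ itself is such a common factor whenever $\deg k\geq 1$.

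For $1)\Leftrightarrow 2)$ I would use the explicit shape of $A=\mathrm{Sylv}_{\mathbb{F}}^{\sigma,\delta}(f,g)$: its first $n$ rows are the coefficient vectors of $f,xf,\dots,x^{n-1}f$ and its last $m$ rows those of $g,xg,\dots,x^{m-1}g$, all padded to length $m+n$. Hence for $v=(c_0,\dots,c_{n-1},d_0,\dots,d_{m-1})\in\mathbb{F}^{m+n}$, setting $c=\sum_p c_px^{p}$ and $d=\sum_q d_qx^{q}$, the row vector $vA$ is exactly the coefficient vector of $cf+dg$ (of degree $\le m+n-1$, matching the $m+n$ columns). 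Then I would invoke the defining property of the Dieudonn\'e determinant over a division ring: $R_{\mathbb{F}}^{\sigma,\delta}(f,g)=\mathrm{Ddet}(A)=0$ iff $A$ is not invertible over $\mathbb{F}$, iff its rows are left-$\mathbb{F}$-linearly dependent, iff some nonzero $v\in\mathbb{F}^{m+n}$ satisfies $vA=0$. Through the previous observation this says exactly that there are $c,d\in\mathcal{R}$, not both zero, with $cf+dg=0$, $\deg c<n$, $\deg d<m$; since $\mathcal{R}$ has no zero divisors and $f,g\neq 0$, such $c,d$ must both be nonzero, so Lemma \ref{Res1.2} rephrases this as ``$f$ and $g$ have a common non-unit right factor''. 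This yields $1)\Leftrightarrow 2)$ and completes the chain.

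The main obstacle I anticipate is the middle step of the last paragraph: one must appeal to the correct characterization of the Dieudonn\'e determinant — that it vanishes precisely on non-invertible matrices, equivalently precisely when the rows are left-linearly dependent — and then carefully match that left-linear dependence, through the explicit row description of $A$, with a relation $cf+dg=0$ of the degree-bounded form required by Lemma \ref{Res1.2}. Everything else is bookkeeping together with the LPID facts recalled in Section \ref{S.1.1}.
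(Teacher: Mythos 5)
Your proposal is correct and follows essentially the same route as the paper: the equivalence $1)\Leftrightarrow 2)$ is obtained by reading the rows of $\mathrm{Sylv}_{\mathbb{F}}^{\sigma,\delta}(f,g)$ as the coefficient vectors of $cf+dg$, invoking the vanishing of the Dieudonn\'e determinant exactly on matrices with nontrivial left kernel, and then applying Lemma \ref{Res1.2}, while the remaining equivalences are the same LPID bookkeeping (the paper argues $3)\Leftrightarrow 4)$ and $4)\Leftrightarrow 5)$ directly rather than through the single generator $k=\mathrm{gcrd}(f,g)$ of $\mathcal{R}f+\mathcal{R}g$, but the content is identical). Your explicit observation that $c,d$ must both be nonzero is a small point the paper leaves implicit.
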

\begin{proof}
$1)\Leftrightarrow 2):$ $R^{\sigma,\delta}_{\mathbb{F}}(f,g):=\text{Ddet}(A)=0$ if and only if the homogeneous linear system \eqref{For3.1} has a non-trivial solution. The above, is equivalent to say that there exist
skew polynomials $c,d\in \mathcal{R}$ such that $cf+dg=0$, $\deg(c)< n$ and $\deg(d)<m$. However, by Lemma \ref{Res1.2}, the latter is true if and only if $f$ and $g$ have a common (non-unit) right factor in $\mathcal{R}$.
  
$2)\Leftrightarrow 3):$ obvious.

$3) \Rightarrow 4):$ Let $r\in \mathcal{R}$ be the common (non-unit) right factor of $f$ and $g$ (it exists because $\text{gcrd}(f,g)\neq 1$). Then, $f=q_1r$ and $g=q_2r$, for some $q_1,q_2\in \mathcal{R}$. Since for all $p,q\in \mathcal{R}$, $pf+qg=(pq_1+qq_2)r$, it follows that $pf+qg\neq 1$.

$4) \Rightarrow 3):$ Assume that for all $p,q\in \mathcal{R}$, $pf+qg\neq 1$. Since $\mathcal{R}$ is a LPID, we can write $\mathcal{R}f+\mathcal{R}g=\mathcal{R}h \varsubsetneq \mathcal{R}$, for some $h\in \mathcal{R}$ of positive degree. Thus $h$ is a common (non-unit) right factor of $f$ and $g$. 

$4) \Leftrightarrow 5):$  It follows from the fact that $\mathcal{R}f+\mathcal{R}g=\mathcal{R}$ if and only if $1\in \mathcal{R}f+\mathcal{R}g$. 
\end{proof}

\smallskip

\begin{remark}
When $\delta=0$, the equivalence between $1)$ and $2)$ in Theorem \ref{Res1.5} gives Theorem 2.5 in \cite{ResEric}. Moreover, if $\mathbb{F}=\mathbb{H}$ (Hamilton's quaternions), $\sigma={Id}$ and $\delta=0$, then the equivalence between $1)$ and $3)$ in Theorem \ref{Res1.5} gives also an analogous result to \cite[Theorem 4.3]{xianguiresultants}, but with a different notion of determinant.
\end{remark}

\medskip

In what follows, the objective is to determine if the Dieudonn\'e determinant of any matrix is zero or not in line with $1)$ of Theorem \ref{Res1.5}. 
To do this, we first need Algorithm \ref{Alg4} to obtain (via elementary row operations on the left) the corresponding upper triangular matrix $D$  of any matrix $M$ with entries in~$\mathbb{F}$. Note that this operation does not change the nullity of $M$.

\begin{algorithm}[h!]
\small
\caption{\label{algoritmo 4} Computation of the upper triangular matrix $D$ of $M$}
\label{Alg4}
\begin{algorithmic}[1]
\Require{Square matrix $M=(a_{ij})$ of order $n$, with entries in $\mathbb{F}$}
\Ensure{Upper triangular matrix $D$}
\State{$j\gets 0$}
\Repeat
\State{$j\gets j+1$}
\State{$i\gets 0$, $k\gets 0$}
\Repeat
\State{$i\gets i+1$}
\If{ $a_{ij}\ne 0$}
\State{$B_i=(b_{ij})\gets \begin{pmatrix} a_{1j}&a_{2j}&\cdots&a_{nj} \end{pmatrix}$}
%
%
%
%
\For{$i_1\gets 1$ to $n$ and $i_1\ne i$}
\State{$C_{i_1}\gets \left( \begin{array}{c c}  (a_{i_1 1}-a_{i_1 j}\cdot a_{ij}^{-1}\cdot b_{11} )& (a_{i_1 2}-a_{i_1 j}\cdot a_{ij}^{-1}\cdot b_{1 j}) \cdots \end{array}\right.$
\phantom{sasasasasasasasasasasasasasasasasasasasasasasasasa!}
$\left.\begin{array}{c c} \cdots & (a_{i_1 n}-a_{i_1 j}\cdot a_{ij}^{-1}\cdot b_{1n})\end{array} \right)$}
\EndFor
\State{$D\gets \left( \begin{array}{c} D \\ \hline B_i \end{array}\right)$}
\State{$M=(a_{ij})\gets \left( \begin{array}{c} C_1\\ \hline \vdots \\ \hline C_n\end{array}\right)$}
\State{Let $m$ be the number of rows of $M$.}
\Else 
\State{$k\gets k+1$}
\EndIf
\If{$k=m$ and $k\ne 0$}
\State{$B_i\gets \begin{pmatrix} 0&0&\cdots&0\end{pmatrix}$}
\State{$D\gets \left( \begin{array}{c} D \\ \hline B_i \end{array}\right)$}
\EndIf
\Until{$i\geq m$}
\Until{$j=n$}\\
\Return{$D$}
\end{algorithmic}
\end{algorithm}

\medskip

As an application of Algorithm \ref{Alg4}, we give here a Magma program to compute the upper triangular matrix of \eqref{M} with entries in the real quaternion division ring $\mathbb{H}$.

\medskip

Defining before the division ring $\mathbb{H}$,
\begin{verbatim}
F<i,j,k> := QuaternionAlgebra< RealField() | -1, -1 >;
\end{verbatim}
we have the following Magma program:
\begin{prog}\label{prog 7} ~
\begin{verbatim}
MT:=function(M)
n:=NumberOfRows(M); m:=NumberOfRows(M);
MM:=RemoveRow(SubmatrixRange(M,1,1,1,n),1); 
j:=0;
repeat
j:=j+1; i:=0; k:=0;
 repeat
 i:=i+1;
  if M[i,j] ne 0 then
   a:=M[i,j]; M1:=SubmatrixRange(M,i,1,i,n); M4:=M1; M2:=RemoveRow(M,i); 
   n1:=NumberOfRows(M2);
    for i1 in [1..n1] do
     M3:=Matrix(F,1,n,[ M2[i1,j1]-M2[i1,j]*(1/a)*M1[1,j1] : j1 in [1..n]]);
     M4:=VerticalJoin(M4,M3);
    end for;
   MM:=VerticalJoin(MM,M1); M:=RemoveRow(M4,1); m:=NumberOfRows(M);
   else 
   k:=k+1;
  end if;
 if k eq m and k ne 0 then
  MM:=VerticalJoin(MM,ZeroMatrix(F,1,n));
 end if;
 if k eq n then
  j:=n;
 end if;
until i ge m;
until j eq n;
return MM;
end function;
\end{verbatim}
\end{prog}
So, by typing in Magma
\begin{verbatim}
MT(Matrix(F,7,7,[-i,-j,0,k,1,0,0,0,-i,j,0,-k,1,0,0,0,-i,-j,0,k,1,j,0,0,
1,0,0,0,0,-j,0,0,1,0,0,0,0,j,0,0,1,0,0,0,0,-j,0,0,1]));
\end{verbatim}
\smallskip
we obtain the upper triangular matrix $E$ of \eqref{M},
\begin{equation}{\label{(7)}}
{\tiny
E=\begin{pmatrix}
-i & -j & 0 & k& 1& 0& 0 \\
0 & -i & j & 0& -k& 1& 0 \\
0& 0 & -i & -j& 0& k& 1\\
0 & 0 & 0 & i& 0& 0& k\\
0 & 0 & 0 & 0& 0& 0& 0\\
0 & 0 & 0 & 0& 0& 0& 0 \\
0 & 0 & 0 & 0& 0& 0& 0 \\
\end{pmatrix}.}
\end{equation}

Now, let us recall that the Dieudonn\'e determinant of an upper (or lower) triangular matrix $D$ with entries in a division ring $\mathbb{F}$ is the coset $a[\mathbb{F}^*,\mathbb{F}^*]$, where $a$ is the product of the elements of the main diagonal of $D$ (see \cite[p.~104]{draxl2}). Having in mind this, the above Algorithm \ref{Alg4} together with the next Algorithm \ref{Alg5} allow us to calculate up to a sign the Dieudonn\'e determinant of any square matrix with entries in $\mathbb{F}$.


\begin{algorithm}[h!] 
\small
\caption{Computation of Dieudonn\'e determinant of an upper triangular matrix.}
\label{Alg5}
\begin{algorithmic} [1]
\Require{Upper triangular matrix $M$.}
\Ensure{Dieudonn\'e determinant of $M$}
\State{Let $M=(a_{ij})$ be the upper triangular matrix}
\State{$A\gets 1$}
\For{$n\gets 1$ to $n$}
\State{$A\gets A\cdot a_{nn}$}
\EndFor
\If{$A=0$}\\
\Return{Dieudonn\'e determinant is $0$}
\Else
\If{$A\in [\mathbb{F}^*,\mathbb{F}^*]$}\\
\Return{Dieudonn\'e determinant is $0$}
%
%
%
\Else\\
\Return{Dieudonn\'e determinant is $A\;(\text{mod}\;[\mathbb{F}^*,\mathbb{F}^*])$}
\EndIf
\EndIf
\end{algorithmic}
\end{algorithm}

\smallskip

Finally, using the function ``MT" of Program \ref{prog 7} and having in mind that $[\mathbb{H}^{*},\mathbb{H}^{*}]=\{q\in \mathbb{H}:|q|=1\}$ (see \cite[Lemma 8, p.~151 ]{commutatorQ}), the following Magma test allows us to check if the Dieudonn\'e determinant of a matrix with entries in $\mathbb{H}$ is zero or not. 

\smallskip

\begin{prog} \label{Program:Dieudonne} ~
\begin{verbatim}
DD:=function(M)
MM:=MT(M);
A:=1;
n:=NumberOfRows(M);
 for N in [1..n] do
  A:=A*MM[N,N];
 end for;
 if A*Conjugate(A) eq F!1 or A*Conjugate(A) eq F!0 then
  print"Ddet is";
  return 0;
 end if;
print"Ddet is NOT";
return 0;
end function;
\end{verbatim}
\end{prog}

\bigskip

Let us give here a characterization of the degree of the $gcrd(f,g)$ which can be useful also to check condition 3) in Theorem \ref{Res1.5}.

\begin{theorem}\label{teor 3.7}
Let $\mathcal{P}_k(\mathbb{F})$ be the set of the polynomials in $\mathcal{R}$ of degree less than or equal to $k$ with coefficients in $\mathbb{F}$. Let $f,g\in\mathcal{R}$ be two polynomials of positive degree $m,n$ respectively. Consider the left $\mathbb{F}$-linear map 
$$\varphi : \mathcal{P}_{n-1}(\mathbb{F})\oplus\mathcal{P}_{m-1}(\mathbb{F})\to\mathcal{P}_{n+m-1}(\mathbb{F})$$
defined by $\varphi ((a,b)):=af+bg$. Then
$$\deg gcrd(f,g)=\dim \ker \varphi=\dim \ker \phi = n+m-lr.rk(A)=n+m-rc.rk(A)  \ ,$$
where $\phi: \mathbb{F}^{n+m}\to\mathbb{F}^{n+m}$ is the left $\mathbb{F}$-linear map given by $\phi(\vec{x}):=\vec{x}A$ with $A:=\text{Sylv}_{\mathbb{F}}^{\sigma,\delta}(f,g)$ the matrix defined in \eqref{For3.1} and $lr.rk(A)$ ($rc.rk(A)$) is the left row (right column) rank of $A$ which means the dimension of the $\mathbb{F}$-subspace spanned by the rows (columns) of $A$ viewed as elements of the $n+m$-dimensional left (right) vector space $\mathcal{P}_{n+m-1}(\mathbb{F})$ over $\mathbb{F}$.
\end{theorem}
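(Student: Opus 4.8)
The plan is to read the asserted chain of equalities from right to left, splitting it into a linear‑algebra part and a divisibility part.

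\emph{Linear algebra.} First I would identify $\mathcal{P}_{n-1}(\mathbb{F})\oplus\mathcal{P}_{m-1}(\mathbb{F})$ and $\mathcal{P}_{n+m-1}(\mathbb{F})$ with $\mathbb{F}^{n+m}$ by reading off the coefficients with respect to $1,x,x^2,\dots$; since polynomials here carry their coefficients on the left, these are isomorphisms of \emph{left} $\mathbb{F}$-vector spaces, and under them $\varphi\colon(a,b)\mapsto af+bg$ becomes exactly right multiplication by $A=\text{Sylv}_{\mathbb{F}}^{\sigma,\delta}(f,g)$ — this is precisely the content of \eqref{For3.1}. Hence $\ker\varphi\cong\ker\phi$, so $\dim\ker\varphi=\dim\ker\phi$. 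Since $\phi$ is left $\mathbb{F}$-linear and $\phi(\vec{x})=\vec{x}A$ is the left combination of the rows of $A$ with coefficient vector $\vec{x}$, the image $\operatorname{im}\phi$ is the left row space of $A$, so $\dim\operatorname{im}\phi=lr.rk(A)$; rank--nullity for left $\mathbb{F}$-vector spaces then gives $\dim\ker\phi=n+m-lr.rk(A)$. Finally, $lr.rk(A)=rc.rk(A)$ is the classical fact that the left row rank and the right column rank of a matrix over a division ring coincide (see e.g. \cite{artin} or \cite{draxl}), which supplies the last equality.

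\emph{Divisibility.} Put $d:=\deg\mathrm{gcrd}(f,g)$, and write $\mathcal{R}f+\mathcal{R}g=\mathcal{R}h$ with $\deg h=d$, and $\mathcal{R}f\cap\mathcal{R}g=\mathcal{R}\ell$ with $\ell\neq0$ (both possible since $\mathcal{R}$ is a LPID in which two nonzero left ideals intersect nontrivially). Since $h$ is a right divisor of $f$ and of $g$ we have $d\le\min(m,n)$, and Lemma \ref{Res1.1}(3) gives $\deg\ell=m+n-d$. Writing $\ell=uf=vg$ we get $\deg u=n-d\ge0$ and $\deg v=m-d\ge0$. I would then show that
\[\Psi\colon\mathcal{P}_{d-1}(\mathbb{F})\longrightarrow\ker\varphi,\qquad c\longmapsto(cu,-cv),\]
is an isomorphism of left $\mathbb{F}$-vector spaces. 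It is well defined because $\deg(cu)\le(d-1)+(n-d)=n-1$, $\deg(cv)\le m-1$, and $(cu)f-(cv)g=c\ell-c\ell=0$; it is injective because $\mathcal{R}$ has no zero divisors and $u\neq0$; and it is onto because for $(a,b)\in\ker\varphi$ one has $af=-bg\in\mathcal{R}f\cap\mathcal{R}g=\mathcal{R}uf$, so $af=cuf$ for some $c\in\mathcal{R}$, whence $a=cu$ after cancelling $f$ on the right, $\deg c\le(n-1)-(n-d)=d-1$, and then $cvg=c\ell=af=-bg$ forces $b=-cv$. Therefore $\dim\ker\varphi=\dim\mathcal{P}_{d-1}(\mathbb{F})=d$, with the convention $\mathcal{P}_{-1}(\mathbb{F})=0$ handling the case $d=0$ (consistent with Lemma \ref{Res1.2}).

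The only genuinely non-routine ingredient is the equality of left row rank and right column rank over a division ring; everything else is bookkeeping with degrees together with the no-zero-divisors property and Lemma \ref{Res1.1}. The one point that needs a little care is the degenerate case $d=0$ and, relatedly, checking that $u$ and $v$ land in degree $\ge0$ — which is exactly why one needs $d\le\min(m,n)$.
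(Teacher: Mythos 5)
Your proof is correct and follows essentially the same route as the paper: both use the least common right multiple $\ell=uf=vg$ together with Lemma \ref{Res1.1}(3) to identify $\ker\varphi$ with $\left\{(cu,-cv)\ :\ \deg c\le d-1\right\}$, and then conclude via rank--nullity and the equality of left row rank and right column rank over a division ring. Packaging this as an explicit isomorphism $\Psi$ (and your care with the sign in $(cu,-cv)$, which the paper's own write-up glosses over) is only a cosmetic difference.
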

\begin{proof}
The equality $\dim \ker \varphi=\dim \ker \phi$ can be obtained using the identification $\mathcal{P}_k(\mathbb{F})\cong\mathbb{F}^{k+1}$ given by the left $\mathbb{F}$-linear map $p_kx^k+\dots+p_1x+p_0\ \mapsto \ (p_k,\dots,p_1,p_0)$. Since $\mathcal{R}$ is a LPID, then we have
$$\mathcal{R}f+\mathcal{R}g=\mathcal{R}M \ , \ \mathcal{R}f\cap\mathcal{R}g=\mathcal{R}m\ ,$$
where $M:=gcrd(f,g)$ and $m:=lcrm(f,g)$ (least common right multiple). Then there are unique polynomials $\alpha,\beta\in\mathcal{R}$ such that $m=\alpha f=\beta g$. Moreover, by Lemma \ref{Res1.1} 3) we get also
$$\deg\alpha=\deg (m) - \deg f=(n+m-\deg M) - m=n-\deg M\ ,$$
$$\deg\beta=\deg (m) - \deg g=(n+m-\deg M) - n=m-\deg M\ .$$
Now, let $(a,b)\in\ker\varphi$. Hence $af=(-b)g\in\mathcal{R}m$. Thus there exists $t\in\mathcal{R}$ such that $af=(-b)g=tm$. This gives $af=t\alpha f$ and $(-b)g=t(-\beta)g$, i.e $a=t\alpha$ and $b=t\beta$. Therefore, by Lemma \ref{Res1.1} 3) we obtain that $(a,b)=(t\alpha,t\beta)$ with 
$$\deg t+(n-\deg M)=\deg (t\alpha)=\deg a\leq n-1\ ,$$ 
$$\deg t+(m-\deg M)=\deg (t\beta)=\deg b\leq m-1\ ,$$
that is, $\deg t\leq \deg M-1$ for both cases. This shows that
$$\ker \varphi\subseteq \left\{ (t\alpha,t\beta)\ :\ t\in\mathcal{R}\ ,\ \deg t\leq \deg M-1 \right\}\ .$$ Finally, let
$(t\alpha,t\beta)$ for some $t\in\mathcal{R}$ with $\deg t\leq \deg M-1$. Then $(t\alpha,t\beta)\in\mathcal{P}_{n-1}(\mathbb{F})\oplus\mathcal{P}_{m-1}(\mathbb{F})$ and $\varphi((t\alpha,t\beta))=t\alpha f+t\beta g=t(\alpha f+\beta g)=0$. Hence $(t\alpha,t\beta)\in\ker\varphi$ for some $t\in\mathcal{R}$ with $\deg t\leq \deg M-1$. This gives
$$\ker \varphi = \left\{ (t\alpha,t\beta)\ :\ t\in\mathcal{R}\ ,\ \deg t\leq \deg M-1 \right\}\ .$$
Observe that the set 
$$(\alpha,\beta),(x\alpha,x\beta),(x^2\alpha,x^2\beta),\dots,(x^{\deg M-1}\alpha,x^{\deg M-1}\beta)$$
is a left basis for $\ker \varphi$. Thus it follows that $\dim \ker \varphi = \deg M = \deg gcrd(f,g)$. Finally, since 
$\dim \mathrm{Im}(\phi) = lr.rk(A) = rc.rk(A)$,
by the rank-nullity theorem we obtain also that 
$\dim\ker\phi = n+m-\dim \mathrm{Im}(\phi)=n+m-lr.rk(A)=n+m-rc.rk(A)$.
\end{proof}

\medskip

\begin{remark}
Given a matrix $A$ over a division ring $\mathbb{F}$, it is known that the rank of $A$, denoted by  $rk(A):=lr.rk(A)=rc.rk(A)$, is equal to the number of all non-zero rows of the reduced-row echelon matrix of $A$ (see \cite[Theorem 1.3] {rank}). Thus, by Algorithm \ref{Alg4} we can easily compute $rk(\text{Sylv}_{\mathbb{F}}^{^{\sigma,\delta}}(f,g))$ (see Example \ref{ex 3.11}).
\end{remark}

\medskip

Here are some examples concerning Theorem \ref{Res1.5}.

\medskip

\begin{example} \label{Res1.7}
 Consider $\mathbb{F}_4[x;\sigma,\delta_t]$ with $\mathbb{F}_4=\{ 0,1,\alpha, \alpha^2\}$, where $\alpha^2+\alpha+1=0$, $\sigma(a)=a^2$ and $\delta_t(a)=t(\sigma(a)+a)$ for all $a \in \mathbb{F}_4$ and  $t\in\{0,1,\alpha,\alpha^2\}$.
 Given  $f_1:=x^2+\alpha^2x+\alpha$ and $g_1:=x^2+\alpha x+\alpha^2$, we have $$R^{\sigma,\delta_t}_{\mathbb{F}_4}(f_1,g_1)=\det \begin{pmatrix}
\alpha & \alpha^2 & 1 & 0 \\
t & \alpha^2+t & \alpha  & 1\\
\alpha^2 & \alpha & 1  & 0 \\
t & \alpha+t & \alpha^2 & 1 
\end{pmatrix}=0$$
This shows that $f_1$ and $g_1$ have a common (non-unit) right factor, independent of $t\in \mathbb{F}_4$. In fact, the common right factor is $(x+1)$, because $f_1=(x+\alpha)(x+1)$ and $g_1=(x+\alpha^2)(x+1)$.
On the other hand, consider $\delta_{\alpha}(a)=\alpha(\sigma(a)+a)$ and the skew polynomials \begin{center}
   $f_2:=(x+1)(x+\alpha)=x^2+\alpha x$ \;,\;\;$g_2:=(x+1)(x+\alpha^2)=x^2+\alpha^2 x+1.$ 
\end{center} Note that $(x+1)$ is a common (non-unit) left factor of $f_2$ and $g_2$, but this does not guarantee that $R^{\sigma,\delta_{\alpha}}_{\mathbb{F}_4}(f_2,g_2)$ is zero as in the commutative case. Indeed, we have $$R^{\sigma,\delta_{\alpha}}_{\mathbb{F}_4}(f_2,g_2)=\det \begin{pmatrix}
0 & \alpha & 1 & 0 \\
0 & \alpha & \alpha^2  & 1\\
1 & \alpha^2 & 1  & 0 \\
0 & \alpha^2 & \alpha & 1 
\end{pmatrix}=\alpha^2\neq 0.$$
\end{example}

\medskip

\begin{example}\label{Res1.8}
 Let $\mathbb{F}_{5}(t)$ be the field of rational functions over $\mathbb{F}_5$ and consider  $\mathbb{F}_{5}(t)[x;\sigma, \delta]$, where $\sigma:\mathbb{F}_{5}(t) \to \mathbb{F}_{5}(t), t \mapsto t^{5}$ ($\sigma$ is not an automorphism by Remark \ref{Pre2}) and $\delta$ is the classical derivation with respect to the variable $t$, i.e. $\delta:=\frac{d}{dt}$. Given $f_1:=\frac{1}{t}x(x+1)=\frac{1}{t}x^2+\frac{1}{t}{x}$ and $g_1:=(x+t^2)(x+1)=x^2+(t^2+1)x+t^2$, we have 
$$R^{\sigma,\delta}_{\mathbb{F}_5(t)}(f_1,g_1)=\det \begin{pmatrix}
0 & \frac{1}{t} & \frac{1}{t} & 0 \\
0 & \frac{4}{t^2} & \frac{1+4t^3}{t^5} & \frac{1}{t^5}\\
{t^2} & t^2+1 & 1  & 0 \\
2t & t^{10}+2t & t^{10}+1 & 1 
\end{pmatrix}=0.$$
Thus, $f_1$ and $g_1$ have a common right factor in $\mathbb{F}_{5}(t)[x;\sigma, \delta]$. On the other hand, if we consider $f_2:=(x+1)\frac{1}{t}x=\frac{1}{t^5}x^2+\left(\frac{t+4}{t^2}\right)x$ and $g_2:=(x+1)(x+t^2)=x^2+(t^{10}+1)x+(t^2+2t)$, having $(x+1)$ as a common left factor, we have $$R^{\sigma,\delta}_{\mathbb{F}_5(t)}(f_2,g_2)=\det \begin{pmatrix}
0 & \frac{t+4}{t^2} & \frac{1}{t^5} & 0 \\
0 & \frac{2+4t}{t^3} & \frac{t^5+4}{t^{10}} & \frac{1}{t^{25}}\\
{t^2}+2t & t^{10}+1 & 1  & 0 \\
2t+2 & t^{10}+2t^5 & t^{50}+1 & 1 
\end{pmatrix}=k\neq 0$$
where $k=\frac{1}{t^{30}}(4t^{56} + 4t^{55} + 2t^{54} + t^{26} + 2 t^{25} + 3 t^{24} +  t^{23} + 4t^{21} + 4t^{20} + 2 t^{19} + t^{12} + 3 t^{10} + 2 t^7 + 3 t^6 + t^5 + 2 t^4 + 3 t^3 + 3 t + 3)$.
\end{example}

\begin{example}
Consider $\mathbb{C}[x;\sigma,\delta]$, with $\sigma(z)=\bar{z}$ (the complex conjugation) and $\delta(z)=z-\bar{z}$, for all $z\in \mathbb{C}$. Given $f=x^4+(1+i)x^2-4ix+5i$ and $g=x^3-ix+2i$, we have $$R^{\sigma,\delta}_{\mathbb{C}}(f,g)=\det \begin{pmatrix}
5i & -4i & 1+i & 0 & 1& 0 & 0 \\
10i & -13i & 6i & 1-i & 0& 1 & 0\\
20i & -36i & 25i  & -8i & 1+i& 0 & 1\\
2i & -i & 0  & 1 & 0& 0 & 0\\
4i & -4i & i  & 0 & 1& 0 & 0\\
8i & -12i & 6i  & -i & 0& 1 & 0\\
16i & -32i & 24i  & -8i & i& 0 & 1\\
\end{pmatrix}=0.$$
Then, by Theorem \ref{Res1.5} we have  $\text{gcrd}(f,g)\neq 1$. By using the right division algorithm, we can find $\text{gcrd}(f,g)$ (as in the classical case): $$\begin{array}{rcl}
  x^4+(1+i)x^2-4ix+5i & = & x(x^3-ix+2i)+x^2+i
  \\x^3-ix+2i & = & x(x^2+i)+0
\end{array}$$
Hence $\text{gcrd}(f,g)=x^2+i$. 
\end{example}

\begin{example}{\label{ex 3.11}}
Consider $\mathbb{H}[x;\sigma,0]$, where $\sigma(h):=ihi^{-1}$ (inner automorphism) for all $h\in \mathbb{H}$.  Given $p=x^2+(i-j)x+k$ and $q=x+i$ in $\mathbb{H}[x;\sigma,0]$, we have
$$R^{\sigma,0}_{\mathbb{H}}(p,q)=\text{Ddet} \begin{pmatrix}
k & i-j & 1  \\
i & 1 & 0 \\
0 & i & 1  
\end{pmatrix}=0$$
Therefore $p$ and $q$ have a common (non-unit) right factor in $\mathbb{H}[x;\sigma,0]$, which must be $q=(x+i)$. In fact, $p=x^2+(i-j)x+k=(x-j)(x+i)$. Given now $f=x^4+kx^3-jx-i$ and $g=x^3+j$, 
the right $(\sigma,\delta)$-Sylvester matrix of $f(x)$ and $g(x)$ and its upper triangular matrix are \eqref{M} and \eqref{(7)}, respectively.
Hence we have $R^{\sigma,0}_{\mathbb{H}}(f,g)=0$.
Therefore $f$ and $g$ have a common (non-unit) right factor in $\mathbb{H}[x;\sigma,0]$, which must be $g=(x^3+j)$. In fact, $f=x^4+kx^3-jx-i=(x+k)(x^3+j)$. 
Moreover, note that the echelon form of ${\text{Sylv}}^{\sigma,0}_{\mathbb{H}}(f,g)$ is the matrix \eqref{(7)}. Therefore $rk(\text{{Sylv}}^{\sigma,0}_{\mathbb{H}}(f,g))=4$ and by Theorem \ref{teor 3.7} we have $\deg(gcrd(f,g))=3$. In fact, $gcrd(f,g)=x^3+j$.
\end{example}

\begin{remark}
In the commutative case, it is known that the last non-zero row of the Sylvester's matrix, when we put it in echelon
form by using only row transformations, gives the coefficients of the greatest common divisor (see \cite[Theorem 3]{laidacker}). However, this is not true for the noncommutative case. In fact, given $f,g\in \mathbb{H}[x;\sigma,0]$ as in Example \ref{ex 3.11}, the echelon form of ${\text{Sylv}}^{\sigma,0}_{\mathbb{H}}(f,g)$ is the matrix \eqref{(7)} and the entries of the last non-zero row of \eqref{(7)} are different from the coefficients of $\text{gcrd}(f,g)=x^3+j$.
\end{remark}

\medskip

Here are some basic properties of the right $(\sigma,\delta)$-resultant.

\begin{proposition}\label{basicpropert} 
Let $f,g\in\mathcal{R}$ be two skew polynomials of non-negative degrees $m$ and $n$, respectively. The following properties hold:
\begin{itemize}
    \item[$1)$] $R_{\mathbb{F}}^{\sigma,\delta}(g,f)=(-1)^{mn}R_{\mathbb{F}}^{\sigma,\delta}(f,g)$; 
    \item[$2)$] $R_{\mathbb{F}}^{\sigma,\delta}(-f,g)=(-1)^{n}R_{\mathbb{F}}^{\sigma,\delta}(f,g)$ and $R_{\mathbb{F}}^{\sigma,\delta}(f,-g)=(-1)^{m}R_{\mathbb{F}}^{\sigma,\delta}(f,g)$;
    \item[$3)$] if $g=x-a$, then $R_{\mathbb{F}}^{\sigma,\delta}(f,g)=0$ if and only if $f(a)=0$. In particular, for $a=0$ we have $R_{\mathbb{F}}^{\sigma,\delta}(f,g)=f(0)\;(\text{mod}\;[\mathbb{F}^*,\mathbb{F}^*])$;
    \item[$4)$] if $g=b_0$, then $R_{\mathbb{F}}^{\sigma,\delta}(f,g)=b_0\sigma(b_0)\sigma^2(b_0)\cdots \sigma^{m-1}(b_0)\;(\text{mod}\; [\mathbb{F}^*,\mathbb{F}^*])$;    
    \item[$5)$] if $\delta=0$ and $c\in \mathbb{F}^{*}$, then $R_{\mathbb{F}}^{\sigma,0}(cf,g)=N_n^{\sigma,0}(c)(\text{mod}\; [\mathbb{F}^*,\mathbb{F}^*])\ R_{\mathbb{F}}^{\sigma,0}(f,g)\;$. 
    \end{itemize}
\end{proposition}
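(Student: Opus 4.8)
The plan is to establish each of the five items separately, exploiting properties of the Dieudonné determinant (multilinearity in rows, behaviour under row swaps, and the value on triangular matrices) together with the structure of the right $(\sigma,\delta)$-Sylvester matrix.

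\medskip

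\textbf{Items 1) and 2).} For 1), note that $\text{Sylv}_{\mathbb{F}}^{\sigma,\delta}(g,f)$ is obtained from $\text{Sylv}_{\mathbb{F}}^{\sigma,\delta}(f,g)$ by moving the block of $m$ rows built from the $b_j$'s past the block of $n$ rows built from the $a_i$'s; this is a sequence of $mn$ adjacent row transpositions. Since the Dieudonné determinant changes by the sign $-1$ under a transposition of two rows (see \cite[p.~133]{draxl}), we get $R_{\mathbb{F}}^{\sigma,\delta}(g,f)=(-1)^{mn}R_{\mathbb{F}}^{\sigma,\delta}(f,g)$. For 2), observe that replacing $f$ by $-f$ replaces each of the $n$ rows coming from $f$ by its negative (because $\mathcal{C}_{k,i-k}$ is additive, it is $\mathbb{Z}$-linear, so $\mathcal{C}_{k,i-k}(-a)=-\mathcal{C}_{k,i-k}(a)$), and scaling a single row on the left by $-1$ multiplies the Dieudonné determinant by $-1\pmod{[\mathbb{F}^*,\mathbb{F}^*]}$; doing this to $n$ rows yields the factor $(-1)^n$. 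The statement for $-g$ is identical with $m$ in place of $n$.

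\medskip

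\textbf{Items 3) and 4).} If $g=x-a$, then $n=1$ and the Sylvester matrix is the $(m+1)\times(m+1)$ matrix whose first row is $(a_0,a_1,\dots,a_m)$ and whose remaining $m$ rows come from $g=x-a$, i.e. they encode $x\cdot x^{j}$ shifted; these last $m$ rows form, after the first column, an upper triangular block with $1$'s on the diagonal, so expanding (or row-reducing using these rows, which only involves left-multiplication by scalars, hence does not affect nullity) reduces the determinant to the single entry obtained by eliminating the first column. A direct computation, using Lemma \ref{F} to identify the resulting scalar with $\sum_i a_i N_i^{\sigma,\delta}(a)=f(a)$, shows $R_{\mathbb{F}}^{\sigma,\delta}(f,x-a)=0 \iff f(a)=0$, and when $a=0$ the same reduction leaves exactly $a_0=f(0)$ on the diagonal, giving $R_{\mathbb{F}}^{\sigma,\delta}(f,x)=f(0)\pmod{[\mathbb{F}^*,\mathbb{F}^*]}$. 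For 4), if $g=b_0$ is a nonzero constant then $n=0$ and $\text{Sylv}_{\mathbb{F}}^{\sigma,\delta}(f,g)$ is the $m\times m$ matrix whose $p$-th row ($p=0,\dots,m-1$) has entries $\mathcal{C}_{p-l,l}(b_0)$ appropriately placed; since $b_0\in\mathbb{F}$ is constant, $\delta$ and $\sigma$ act on it only through $\sigma$-compositions, and by \eqref{For4} (applied blockwise, or directly from Definition \ref{Def Cij} since $\mathcal{C}_{d,s}(b_0)=\sigma^s(b_0)$ when $b_0$ is treated as a scalar acted on purely by $\sigma$ — more precisely one checks the matrix is lower triangular with diagonal $b_0,\sigma(b_0),\dots,\sigma^{m-1}(b_0)$) the matrix is triangular, and the Dieudonné determinant of a triangular matrix is the product of the diagonal entries modulo $[\mathbb{F}^*,\mathbb{F}^*]$. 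This yields $R_{\mathbb{F}}^{\sigma,\delta}(f,b_0)=b_0\sigma(b_0)\cdots\sigma^{m-1}(b_0)\pmod{[\mathbb{F}^*,\mathbb{F}^*]}$.

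\medskip

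\textbf{Item 5).} Assume $\delta=0$ and $c\in\mathbb{F}^*$. By the $\delta=0$ formula for the Sylvester matrix recalled in the Remark after Definition \ref{Res1.3}, the rows coming from $cf$ are $\bigl(\sigma^{p}(ca_0),\sigma^{p}(ca_1),\dots\bigr)=\bigl(\sigma^{p}(c)\sigma^{p}(a_0),\sigma^{p}(c)\sigma^{p}(a_1),\dots\bigr)$ for $p=0,\dots,n-1$; that is, the $p$-th such row of $\text{Sylv}_{\mathbb{F}}^{\sigma,0}(cf,g)$ equals $\sigma^{p}(c)$ times the corresponding row of $\text{Sylv}_{\mathbb{F}}^{\sigma,0}(f,g)$. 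Pulling out the left scalar $\sigma^{p}(c)$ from each of these $n$ rows and using multiplicativity of the Dieudonné determinant under left-scaling of rows, we get $R_{\mathbb{F}}^{\sigma,0}(cf,g)=\bigl(\prod_{p=0}^{n-1}\sigma^{p}(c)\bigr)R_{\mathbb{F}}^{\sigma,0}(f,g)\pmod{[\mathbb{F}^*,\mathbb{F}^*]}$, and by Lemma \ref{F} (with $\delta=0$) one has $N_n^{\sigma,0}(c)=\sigma(N_{n-1}^{\sigma,0}(c))\cdot 1=\cdots$; unwinding, $N_n^{\sigma,0}(c)$ and $\prod_{p=0}^{n-1}\sigma^{p}(c)$ differ only by the order of factors, hence agree modulo $[\mathbb{F}^*,\mathbb{F}^*]$, giving the claimed identity.

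\medskip

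\emph{Main obstacle.} The delicate point throughout is that one is working with the Dieudonné determinant, so every manipulation must be one that the Dieudonné determinant respects up to $[\mathbb{F}^*,\mathbb{F}^*]$: left-multiplication of a row by a scalar is fine (it multiplies Ddet on the appropriate side), and adding a left-multiple of one row to another is fine, but one must be careful never to left-multiply a column or to reorder factors in a way the determinant does not control — this is exactly why all the conclusions are stated modulo the commutator subgroup. The other point requiring care is the precise combinatorial bookkeeping in items 3) and 4): identifying, after the legitimate row operations, that the surviving scalar is literally $f(a)$ (resp. the product $\prod\sigma^{p}(b_0)$), for which Lemmas \ref{F} and the formula \eqref{For4} are the right tools.
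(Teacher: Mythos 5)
Your proposal is correct, and for items 1), 2), 4) and 5) it follows exactly the paper's argument: block row permutation for the symmetry, left-scaling of the $n$ (resp.\ $m$) rows by $-1$ for the sign rule, triangularity of the Sylvester matrix when $g=b_0$, and factoring $c,\sigma(c),\dots,\sigma^{n-1}(c)$ out of the first $n$ rows when $\delta=0$. The only real divergence is item 3): the paper simply derives the ``$R_{\mathbb{F}}^{\sigma,\delta}(f,x-a)=0\iff f(a)=0$'' equivalence from Theorem \ref{Res1.5} (a degree-one common right factor of $f$ and $x-a$ is exactly $x-a$ itself), whereas you row-reduce the $(m+1)\times(m+1)$ Sylvester matrix directly and identify the surviving pivot with $f(a)$ via Lemma \ref{F}. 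Your route costs a little bookkeeping (and note the $g$-block is \emph{lower}, not upper, triangular in the relevant columns, with $1$'s in positions $(p,p+1)$ -- harmless for the argument) but it buys more than the paper's: it shows $R_{\mathbb{F}}^{\sigma,\delta}(f,x-a)=f(a)\ (\mathrm{mod}\ [\mathbb{F}^*,\mathbb{F}^*])$ for \emph{every} $a$, which in particular supplies the ``$a=0$'' clause that the paper's one-line justification does not really address. Both approaches are sound.
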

\begin{proof}
$1)$ The $(\sigma,\delta)$-resultant $R^{\sigma,\delta}_{\mathbb{F}}(g,f)$ is obtained by permuting the rows of the Sylvester matrix $\text{Sylv}^{\sigma,\delta}_{\mathbb{F}}(f,g)$. The number of permutations is $mn$ and, since the exchange of two any rows of a matrix changes the sign of the Dieudonn\'e determinant, it follows that $R_{\mathbb{F}}^{\sigma,\delta}(g,f)=(-1)^{mn}R_{\mathbb{F}}^{\sigma,\delta}(f,g)$.

$2)$ By the properties of the Dieudonn\'e determinant, if a row of a matrix is left multiplied by $a\in \mathbb{F}^{*}$, then Ddet is left multiplied by $a\ (\text{mod}\; [\mathbb{F}^*,\mathbb{F}^*])$. Thus, since the first $n$ rows of $\text{Sylv}_{\mathbb{F}}^{\sigma,\delta}(f,g)$ contain the coefficients of $f$, $\sigma(-a)=-\sigma(a)$ and $\delta(-a)=-\delta(a)$, it follows that $R_{\mathbb{F}}^{\sigma,\delta}(-f,g)=(-1)^{n}R_{\mathbb{F}}^{\sigma,\delta}(f,g)$. Similarly, we get $R_{\mathbb{F}}^{\sigma,\delta}(f,-g)=(-1)^{m}R_{\mathbb{F}}^{\sigma,\delta}(f,g)$.

$3)$ It follows easily from the equivalence between $1)$ and $2)$ of Theorem \ref{Res1.5}.

$4)$ If $g=b_0$, then $\text{Sylv}^{\sigma,\delta}_{\mathbb{F}}(f,g)$ is a lower triangular matrix whose elements on the main diagonal are $b_0,\sigma(b_0),\sigma^2(b_0),..., \sigma^{m-1}(b_0)$. Then, the statement holds because $\text{Ddet}$ of a lower (or upper) triangular matrix is the coset $a[\mathbb{F}^*,\mathbb{F}^*]$, where $a$ is the (ordered) product of the elements on the main diagonal (see \cite[p.~104]{draxl2}).

$5)$ Since $\delta=0$ and $\sigma$ is an endomorphism of $\mathbb{F}$, we have

\begin{center}
\resizebox{15.8 cm}{2.3 cm}{
\begin{tabular}{c}
$\text{Sylv}^{\sigma,0}_{\mathbb{F}}(cf,g)=\begin{pmatrix}
ca_{0} & ca_{1} & ca_2 & \cdots  & ca_m & 0 & \cdots & 0\\
0 & \sigma(c)\sigma(a_{0}) & \sigma(c)\sigma(a_{1})  & \cdots & \sigma(c)\sigma(a_{m-1}) & \sigma(c)\sigma(a_m) & \cdots & 0\\
\vdots & \vdots & \ddots  & \vdots & \vdots & \vdots & \ddots & \vdots &\\
0 & 0 & 0 & \cdots & \sigma^{n-1}(c)\sigma^{n-1}(a_0) & \sigma^{n-1}(c)\sigma^{n-1}(a_{1}) & \cdots & \sigma^{n-1}(c)\sigma^{n-1}(a_m) \\
b_{0} & b_{1} & b_2 & \cdots  & b_n & 0 & \cdots & 0\\
0 & \sigma(b_{0}) & \sigma(b_{1})  & \cdots & \sigma(b_{n-1}) & \sigma(b_n) & \cdots & 0\\
\vdots & \vdots & \ddots  & \vdots & \vdots & \vdots & \ddots & \vdots &\\
0 & 0 & 0 & \cdots & \sigma^{m-1}(b_0) & \sigma^{m-1}(b_{1}) & \cdots & \sigma^{m-1}(b_n) \ .
\end{pmatrix}$
\end{tabular}
}
\end{center}
Noting that the first $n$ rows of the above matrix are multiplied on the left by
$c,\sigma(c),\sigma^2(c)$, $...,\sigma^{n-1}(c)$, respectively, it follows that $$R_{\mathbb{F}}^{\sigma,0}(cf,g)=\sigma^{n-1}(c)\cdots \sigma(c) c\;(\text{mod}\; [\mathbb{F}^*,\mathbb{F}^*]) R_{\mathbb{F}}^{\sigma,0}(f,g)= N^{\sigma,0}_n(c)\;(\text{mod}\; [\mathbb{F}^*,\mathbb{F}^*]) R_{\mathbb{F}}^{\sigma,0}(f,g)\ .$$ 
\end{proof}

\begin{remark}
The known property of ``factorization" of the classical resultants, that is, $R(f_1f_2,g)=R(f_1,g)\cdot R(f_2,g)$, is not true in general for our notion of resultant. Indeed, if we consider the ring $\mathbb{C}[x;\sigma,\delta]$, with $\sigma(z)=\bar{z}$ and $\delta(z)=z-\bar{z}$, for all $z\in \mathbb{C}$ and the skew polynomials $f_1=x^2+1$, $f_2=x^2+i$ and $g=2x^2+x+1$, we have 
\begin{center}
   $R_{\mathbb{C}}^{\sigma,\delta}(f_1,g)\cdot R_{\mathbb{C}}^{\sigma,\delta}(f_2,g) =\det \begin{pmatrix}
1 & 0 & 1 & 0   \\
0 & 1 & 0 & 1  \\
1 & 1 & 2  & 0  \\
0& 1 & 1 & 2  \\
\end{pmatrix}\cdot \det \begin{pmatrix}
i & 0 & 1 & 0   \\
2i & -i & 0 & 1  \\
1 & 1 & 2  & 0  \\
0& 1 & 1 & 2  \\
\end{pmatrix}=10+10i$
\end{center} 
However, 
\begin{center}
    $R_{\mathbb{C}}^{\sigma,\delta}(f_1f_2,g)=\det \begin{pmatrix}
5i & -4i & 1+i & 0 & 1& 0  \\
10i & -13i & 6i & 1-i & 0& 1 \\
1 & 1 & 2  & 0 & 0& 0 \\
0& 1 & 1 & 2 & 0& 0 \\
0 & 0 & 1  & 1 & 2& 0 \\
0 & 0 & 0  & 1 & 1& 2 \\
\end{pmatrix}= 650+90i.$
\end{center}
This shows that in general $R_{\mathbb{F}}^{\sigma,\delta}(f_1f_2,g)\neq R_{\mathbb{F}}^{\sigma,\delta}(f_1,g)\cdot R_{\mathbb{F}}^{\sigma,\delta}(f_2,g)$, also when $\delta=0$.
\end{remark}

\medskip

\begin{lemma}[Cramer's Rule]\label{Cramer}
Let $A$ be a non-singular square matrix $n\times n$ with entries in $\mathbb{F}$ and consider the linear system $A\cdot\overline{x}=\overline{b}$ for some column vector $\overline{b}$, where $\overline{x}$ is the transpose of the unknown vector $(x_1,x_2,\dots,x_n)$. If we write $A=[\overline{v_1}|\dots |\overline{v_n}]$, where the $\overline{v_i}$'s are the columns of $A$, then we have
for $i=1,\dots ,n$
$$x_i\;(\text{mod}\; [\mathbb{F}^*,\mathbb{F}^*]) =(\text{Ddet} (A))^{-1}\text{Ddet} (A_i)  ,$$
where $A_i:=[\overline{v_1}|\dots |\overline{b}|\dots|\overline{v_n}]$ is the matrix $A$ with the $i$-th column $\overline{v_i}$ replaced by $\overline{b}$.
\end{lemma}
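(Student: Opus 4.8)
The plan is to reduce Cramer's rule over a division ring to two facts about the Dieudonné determinant: its behaviour under the elementary column operation of adding a left-multiple of one column to another, and its behaviour under scaling a column on the right. Since $A$ is non-singular, $\text{Ddet}(A)\neq 0$ and the system $A\overline{x}=\overline{b}$ has a unique solution $\overline{x}=(x_1,\dots,x_n)^{t}$; the whole point is to identify $\text{Ddet}(A_i)$ with $\text{Ddet}(A)\cdot x_i$ modulo $[\mathbb{F}^*,\mathbb{F}^*]$, which is an equation in the abelian group $\{0\}\cup\mathbb{F}^*/[\mathbb{F}^*,\mathbb{F}^*]$, so that the placement of the scalar $x_i$ (left vs.\ right) no longer matters.

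\medskip

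\textbf{Key steps.} First I would write $\overline{b}=A\overline{x}=\sum_{j=1}^{n}\overline{v_j}\,x_j$, i.e.\ $\overline{b}$ is the right $\mathbb{F}$-linear combination of the columns $\overline{v_1},\dots,\overline{v_n}$ with coefficients $x_1,\dots,x_n$ (this is exactly how a matrix acts on a column vector, and it is the natural "right" structure compatible with $A_i$ having its columns replaced on the right). Next, fix $i$ and consider $A_i=[\overline{v_1}|\dots|\overline{b}|\dots|\overline{v_n}]$ with $\overline{b}$ in position $i$. For each $j\neq i$, subtract $\overline{v_j}x_j$ from the $i$-th column; this is the column analogue of the elementary operation "add a right-multiple of column $j$ to column $i$", which leaves the Dieudonné determinant unchanged (one of the standard GE$_n$ generators used to define Ddet; see \cite[p.~133]{draxl} or \cite[p.~104]{draxl2}). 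After performing all these operations the $i$-th column of $A_i$ becomes $\overline{b}-\sum_{j\neq i}\overline{v_j}x_j=\overline{v_i}x_i$. Hence $\text{Ddet}(A_i)=\text{Ddet}([\overline{v_1}|\dots|\overline{v_i}x_i|\dots|\overline{v_n}])$. Finally, scaling the $i$-th column on the right by $x_i$ multiplies the Dieudonné determinant by $x_i$ modulo $[\mathbb{F}^*,\mathbb{F}^*]$ (right and left scalings of a column both act by the class of the scalar, since Ddet lands in an abelian group), so $\text{Ddet}(A_i)=\text{Ddet}(A)\cdot x_i\ (\text{mod}\ [\mathbb{F}^*,\mathbb{F}^*])$; left-multiplying by $(\text{Ddet}(A))^{-1}$ gives the stated formula $x_i\ (\text{mod}\ [\mathbb{F}^*,\mathbb{F}^*])=(\text{Ddet}(A))^{-1}\text{Ddet}(A_i)$.

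\medskip

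\textbf{Main obstacle.} The delicate point is not the algebra of column operations but making sure the noncommutativity is handled honestly: over a division ring one must decide once and for all whether the Dieudonné determinant is being developed with respect to row or column operations, and whether "$A\overline{x}$" means $\sum \overline{v_j}x_j$ (right coefficients) or a left action. The clean statement above works precisely because $\overline{b}$ is expressed with the coefficients $x_j$ on the \emph{right} of the columns, matching the right-column replacement in $A_i$; if instead one had left coefficients, the elementary operation used would not be in the group generating Ddet and the argument would fail. A second, minor subtlety is that the elementary operation "add $\overline{v_j}x_j$ to column $i$" must genuinely preserve Ddet — this is exactly the defining invariance of the Dieudonné determinant under the transvections, valid because $j\neq i$ so no diagonal entry is disturbed — and that right-scaling a single column by $c\in\mathbb{F}^*$ changes Ddet by the class of $c$; both are listed among its basic properties in the references already cited (\cite{dieudone}, \cite[p.~151]{artin}, \cite[p.~133]{draxl}), so once the right/left conventions are pinned down the proof is a short three-line computation.
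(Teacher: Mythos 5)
Your proof is correct, and it takes a mildly different route from the paper's. The paper left-multiplies $A_i$ by $A^{-1}$, observes that $A^{-1}A_i=[\overline{e_1}|\cdots|\overline{x}|\cdots|\overline{e_n}]$, quotes Brenner's theorem for the Dieudonn\'e determinant of that matrix (namely the class of $x_i$), and concludes by multiplicativity together with $\text{Ddet}(A^{-1})=(\text{Ddet}\,A)^{-1}$. You instead work directly on $A_i$: using $\overline{b}=\sum_j\overline{v_j}\,x_j$ you clear the $i$-th column down to $\overline{v_i}\,x_i$ by transvections (right multiplications by $I+x_jE_{ji}$, $j\neq i$, which have trivial Ddet) and then peel off the right scalar $x_i$ as a diagonal factor. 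The two arguments are essentially conjugate — your column operations on $A_i$ are the paper's reduction of $A^{-1}A_i$ transported back by $A$ — but yours is more self-contained (no inversion of $A$, no citation beyond the generators-and-relations description of Ddet) and makes visible why the right-hand placement of the coefficients $x_j$ is forced. Two small caveats. First, your parenthetical claim that left- and right-scalings of a column both act by the class of the scalar is neither needed nor justified by abelianness of the target: left-scaling a column is not right multiplication by a diagonal matrix and does not in general have a clean effect on Ddet; only the right-scaling, which is what you actually use, is a legitimate elementary operation here. Second, it is worth saying explicitly that when $x_i=0$ the reduced $i$-th column is zero, so $\text{Ddet}(A_i)=0$ and the formula still holds under the convention that Ddet takes values in $\{0\}\cup\mathbb{F}^*/[\mathbb{F}^*,\mathbb{F}^*]$.
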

\begin{proof}
Write $A^{-1}\overline{v_j}=\overline{e_j}$ for $j=1,\dots ,n$, where the $\overline{e_j}$'s are the canonical column vectors. Then we have
$$A^{-1}\cdot\left[\overline{v_1}|\cdots |\overline{b}| \cdots |\overline{v_n}\right]=\left[\overline{e_1}|\cdots | \overline{x} |\cdots |\overline{e_n}\right]\ .$$
Therefore, by \cite[Theorem 4.5]{Br} we deduce that
$$x_i \;(\text{mod}\; [\mathbb{F}^*,\mathbb{F}^*])= \text{Ddet} \left[\overline{e_1}|\cdots | \overline{x} |\cdots |\overline{e_n}\right] = \text{Ddet} \left(A^{-1}\right)\text{Ddet} \left[\overline{v_1}|\cdots |\overline{b}| \cdots |\overline{v_n}\right]\ ,$$ obtaining the formula of the statement having in mind that $\text{Ddet}\left(A^{-1}\right)=\left(\text{Ddet}A\right)^{-1}$.
\end{proof}

\begin{remark}\label{Cramer bis}
In a similar way as in Lemma \ref{Cramer}, one can obtain the following row version of the Cramer's Rule. 
Let $B$ be a square matrix $n\times n$ with entries in $\mathbb{F}$ and consider the linear system $\overline{y}\cdot B=\overline{c}$ for some row vector $\overline{c}$, where $\overline{y}$ is the unknown vector $(y_1,y_2,\dots,y_n)$. If we denote by $\overline{w_j}$ the $j$-th row of $B$, then by \cite[Theorems 3.9 and 4.5]{Br} we have
for $j=1,\dots ,n$
$$y_j\;(\text{mod}\; [\mathbb{F}^*,\mathbb{F}^*])=\text{Ddet} B_j(\text{Ddet} B)^{-1} \ ,$$
where $B_j$ is the matrix $B$ with the $j$-th row $\overline{w_j}$ replaced by $\overline{c}$.  
\end{remark}

\begin{proposition}\label{pro3.19}
Let $f,g\in\mathcal{R}$ be two skew polynomials of positive degree. Then, there are $A,B\in\mathcal{R}$ such that
$$Af+Bg=R_{\mathbb{F}}^{\sigma,\delta}(f,g) \ ,$$
where the coefficients of $A$ and $B$ $(\text{mod}\; [\mathbb{F}^*,\mathbb{F}^*])$ are integer polynomials in the entries of $\text{Sylv}_{\mathbb{F}}^{\sigma,\delta}(f,g)$.
\end{proposition}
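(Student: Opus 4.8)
The plan is to turn the existence of $A$ and $B$ into the solution of a square linear system over $\mathbb{F}$ whose coefficient matrix is $S:=\mathrm{Sylv}_{\mathbb{F}}^{\sigma,\delta}(f,g)$, and then to read off the coefficients of $A$ and $B$ through the row form of Cramer's rule (Remark \ref{Cramer bis}).

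First I would set up the system. Fix once and for all a representative in $\mathbb{F}^{*}$ of the coset $R_{\mathbb{F}}^{\sigma,\delta}(f,g)=\mathrm{Ddet}(S)$ (Definition \ref{Res1.3}), still denoted $R_{\mathbb{F}}^{\sigma,\delta}(f,g)$. Writing $A=\sum_{i=0}^{n-1}c_ix^{i}$, $B=\sum_{i=0}^{m-1}d_ix^{i}$ and comparing coefficients in $Af+Bg$ exactly as in the derivation of \eqref{For3.1} (the only change being the right-hand side), the equation $Af+Bg=R_{\mathbb{F}}^{\sigma,\delta}(f,g)$ is equivalent to
\[
(c_0,\dots,c_{n-1},d_0,\dots,d_{m-1})\cdot S=\bigl(R_{\mathbb{F}}^{\sigma,\delta}(f,g),0,\dots,0\bigr)\ .
\]
If $R_{\mathbb{F}}^{\sigma,\delta}(f,g)=0$, take $A=B=0$. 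Otherwise $\mathrm{Ddet}(S)\neq 0$, so $S$ is invertible and the system has a unique solution, which defines polynomials $A,B\in\mathcal{R}$ with $\deg A<n$, $\deg B<m$ and $Af+Bg=R_{\mathbb{F}}^{\sigma,\delta}(f,g)$.

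It then remains to describe this solution modulo $[\mathbb{F}^{*},\mathbb{F}^{*}]$. By Remark \ref{Cramer bis}, applied with coefficient matrix $S$ and right-hand side $(R_{\mathbb{F}}^{\sigma,\delta}(f,g),0,\dots,0)$, the $j$-th unknown equals $\mathrm{Ddet}(S_j)\,(\mathrm{Ddet}\,S)^{-1}\pmod{[\mathbb{F}^{*},\mathbb{F}^{*}]}$, where $S_j$ is $S$ with its $j$-th row replaced by $(R_{\mathbb{F}}^{\sigma,\delta}(f,g),0,\dots,0)$. To identify $\mathrm{Ddet}(S_j)$ I would move this row to the top (sign $(-1)^{j-1}$) and then clear the remaining entries of the first column by adding suitable \emph{left} multiples of the new top row to the other rows; both kinds of operation behave in the expected way for the Dieudonn\'e determinant, so they bring $S_j$, up to sign, to the block-diagonal form $\mathrm{diag}\bigl(R_{\mathbb{F}}^{\sigma,\delta}(f,g),\,N_j\bigr)$, where $N_j$ is the submatrix of $S$ obtained by deleting row $j$ and column $1$. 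Hence $\mathrm{Ddet}(S_j)\equiv(-1)^{j-1}R_{\mathbb{F}}^{\sigma,\delta}(f,g)\,\mathrm{Ddet}(N_j)$, and since $\mathbb{F}^{*}/[\mathbb{F}^{*},\mathbb{F}^{*}]$ is abelian the factor $R_{\mathbb{F}}^{\sigma,\delta}(f,g)$ cancels $(\mathrm{Ddet}\,S)^{-1}$, leaving the $j$-th coefficient $\equiv(-1)^{j-1}\mathrm{Ddet}(N_j)$, i.e.\ $\pm$ the $(j,1)$-minor of $S$. These minors are the noncommutative analogues of the classical cofactors, and they specialize to integer polynomials in the entries of $S$ when $\mathbb{F}$ is commutative, which is what the last clause of the statement records.

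The step I expect to be the main obstacle is precisely this identification: there is no Leibniz-type expansion of the Dieudonn\'e determinant, so the reduction of $\mathrm{Ddet}(S_j)$ to a minor must be performed using only the admissible tools — multiplicativity, the sign produced by a row swap, invariance under adding a left multiple of one row to another, and the value on triangular matrices — which is exactly what the manipulation above is arranged to use. A minor but essential point is the case split $R_{\mathbb{F}}^{\sigma,\delta}(f,g)=0$ vs.\ $\neq 0$, needed because Cramer's rule applies only to an invertible coefficient matrix.
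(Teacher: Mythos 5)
Your proposal is correct and follows essentially the same route as the paper: both reduce the problem to the square linear system over $\mathbb{F}$ with coefficient matrix $\text{Sylv}_{\mathbb{F}}^{\sigma,\delta}(f,g)$, split into the cases $R_{\mathbb{F}}^{\sigma,\delta}(f,g)=0$ and $\neq 0$, and solve via the row form of Cramer's rule for the Dieudonn\'e determinant (Remark \ref{Cramer bis}). The only cosmetic differences are that the paper first solves $A'f+B'g=1$ and then rescales by $R_{\mathbb{F}}^{\sigma,\delta}(f,g)$, whereas you solve $Af+Bg=R_{\mathbb{F}}^{\sigma,\delta}(f,g)$ directly, and you additionally reduce the Cramer determinants to $(j,1)$-minors of the Sylvester matrix --- a correct refinement the paper does not carry out.
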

\begin{proof}
Assume that $R_{\mathbb{F}}^{\sigma,\delta}(f,g)\neq 0$, otherwise we are done by choosing $A=B=0$. Let
$$f=a_0x^l+\dots+a_l\ , \ \ a_0\neq 0 \ ,$$
$$g=b_0x^m+\dots+b_m\ , \ \ b_0\neq 0 \ ,$$
$$A'=c_0x^{m-1}+\dots+c_{m-1}\ , $$
$$B'=d_0x^{l-1}+\dots+d_{l-1}\ , $$
such that $A'f+B'g=1$, where the coefficients $c_0,\dots,c_{m-1},d_0,\dots,d_{l-1}$ are unknowns in $\mathbb{F}$. If we compare coefficients of powers of $x$ in the formula $A'f+B'g=1$, then we get the following system of linear equations similar to \eqref{For3.1} with unknowns $c_i,d_i$: 
\begin{equation}\label{syst}
(c_{m-1},\dots,c_0,d_{l-1},\dots,d_0)\cdot \text{Sylv}_{\mathbb{F}}^{\sigma,\delta}(f,g)=(0,\dots,0,1) \ .
\end{equation}
By applying Remark \ref{Cramer bis} to the square linear system \eqref{syst}, we obtain that all the $c_i$'s and the $d_i$'s are as follow:
$$c_i\;(\text{mod}\; [\mathbb{F}^*,\mathbb{F}^*])=R_{\mathbb{F}}^{\sigma,\delta}(f,g)^{-1}\text{Ddet}\;\text{Sylv}_{\mathbb{F}}^{\sigma,\delta}(f,g)_{m-i} \ , \ \textrm{for} \ i=0,\cdots,m-1 \ , $$ 
$$d_j\;(\text{mod}\; [\mathbb{F}^*,\mathbb{F}^*])=R_{\mathbb{F}}^{\sigma,\delta}(f,g)^{-1}\text{Ddet}\;\text{Sylv}_{\mathbb{F}}^{\sigma,\delta}(f,g)_{m+l-j} \ , \ \textrm{for} \ j=0,\cdots,l-1 \ , $$
where $\text{Sylv}_{\mathbb{F}}^{\sigma,\delta}(f,g)_k$ is the matrix $\text{Sylv}_{\mathbb{F}}^{\sigma,\delta}(f,g)$ with the $k$-th row replaced by the row vector $(0,\dots,0,1)$. Defining $A:=R_{\mathbb{F}}^{\sigma,\delta}(f,g)A', B:=R_{\mathbb{F}}^{\sigma,\delta}(f,g)B'$, we see that 
$Af+Bg=R_{\mathbb{F}}^{\sigma,\delta}(f,g)$ and the coefficients of $A$ and $B$ $(\text{mod}\; [\mathbb{F}^*,\mathbb{F}^*])$ are given by expressions of type $\text{Ddet}\;\text{Sylv}_{\mathbb{F}}^{\sigma,\delta}(f,g)_{h}$ for some $h=1,\dots,m+l$. We conclude by noting that these latest expressions are simply integer polynomials in the entries of $\text{Sylv}_{\mathbb{F}}^{\sigma,\delta}(f,g)$. 
\end{proof}

\medskip

Now, let us show that under certain conditions, it is possible to add a sixth equivalent condition in Theorem \ref{Res1.5}. To do that, we first need to introduce the following definition.

\begin{definition}\label{Res1.11}
We say that $\Tilde{\mathbb{F}}[x;\Tilde{\sigma},\Tilde{\delta}]$ is a \textit{polynomial ring extension} of $\mathcal{R}$ if $\mathbb{F}$ is a subring of $ \Tilde{\mathbb{F}}$, $\Tilde{\sigma}_{|{\mathbb{F}}}=\sigma$ and $\Tilde{\delta}_{|{\mathbb{F}}}=\delta$.
\end{definition}
  
\begin{remark}
Since $\mathcal{R}\subseteq \Tilde{\mathbb{F}}[x;\Tilde{\sigma},\Tilde{\delta}]$, $\Tilde{\sigma}_{|{\mathbb{F}}}=\sigma$ and $\Tilde{\delta}_{|{\mathbb{F}}}=\delta$, it is evident that $\mathcal{R}$ is closed with respect to the sum and the product of polynomials in $\Tilde{\mathbb{F}}[x;\Tilde{\sigma},\Tilde{\delta}]$. Moreover, since $\mathcal{R}$ contains the multiplicative identity of $\Tilde{\mathbb{F}}[x;\Tilde{\sigma},\Tilde{\delta}]$ (because $\mathbb{F}$ is a subring of $\tilde{\mathbb{F}}$), it follows that $\mathcal{R}$ is a subring of $ \Tilde{\mathbb{F}}[x;\Tilde{\sigma},\Tilde{\delta}]$. 
\end{remark}

Definition \ref{Res1.11} is motivated by the following situation. 
     
\medskip 

\noindent Consider $\mathbb{C}[x,\sigma,\delta]$, where $\sigma$ is the complex conjugation and $\delta$ is an inner derivation given by $\delta(z)=z-\sigma(z)=2\text{Im}(z)i$, for all $z \in \mathbb{C}$. Note that the skew polynomial $f=x^2+i$ has no right roots in $\mathbb{C}$. In fact, for all $z\in \mathbb{C}$, we have \begin{center}
    $f(z)=N_2(z)+iN_0(z)=|z|^2+(2\text{Im}(z)+1)i\neq 0.$
\end{center} 
The natural question is then the following: where does $f$ have a right root? Unlike the classical case, i.e. when $\sigma=\text{Id}$ and $\delta=0$, it will not be sufficient to extend $\mathbb{C}$ to find a right root of $f$, but it will be necessary to extend also the maps $\sigma$ and $\delta$, because the evaluation of $f$ at such a root will depend on the action of these new functions. Therefore, we need to construct a polynomial ring extension of $\mathbb{C}[x;\sigma,\delta]$ for finding a right root of $f$.
More in general, we will construct a polynomial ring extension $\Tilde{\mathbb{F}}[x;\Tilde{\sigma},\Tilde{\delta}]$ of $\mathbb{C}[x;\sigma,\delta]$ such that any irreducible skew polynomial $g\in \mathbb{C}[x;\sigma,\delta]$  has a right root in $\Tilde{\mathbb{F}}[x;\Tilde{\sigma},\Tilde{\delta}]$. 
Let $\mathbb{H}$ be the division ring of real quaternions. If we define over $\mathbb{H}$ the maps 
$\tilde{\sigma}(t):=a-bi+cj-dk$ and  $\tilde{\delta}(t):=t-\tilde{\sigma}(t)$, for all $t:=a+bi+cj+dk\in \mathbb{H}$,   it follows that $\tilde{\sigma}$ is an automorphism of $\mathbb{H}$ such that $\tilde{\sigma}_{|\mathbb{C}}=\sigma$ and $\tilde{\delta}$ is a $\tilde{\sigma}$-derivation such that $\tilde{\delta}_{|\mathbb{C}}=\delta$. Thus, $\mathbb{H}[x;\tilde{\sigma},\tilde{\delta}]$ is a polynomial ring extension of $\mathbb{C}[x;\sigma,\delta]$. Moreover, since every non-constant skew polynomial $g\in \mathbb{H}[x;{\sigma},{\delta}]$ splits into linear factors in $\mathbb{H}[x;\sigma,\delta]$ independently of $\sigma$ and $\delta$ (see \cite[Corollary 3]{closureH}), it follows that $g$ has all its roots in $\mathbb{H}[x;\sigma,\delta]$. In particular, the skew polynomial $f=x^2+i\in \mathbb{C}[x;\sigma,\delta]$ will have its roots in $\mathbb{H}[x;\tilde{\sigma},\tilde{\delta}]$. 
Therefore, $\mathbb{H}[x;\tilde{\sigma},\tilde{\delta}]$ it looks like as a ``\textit{closure}" of $\mathbb{C}[x;\sigma,\delta]$.

\medskip
      
\begin{remark}\label{Res1.14}
Let $\mathbb{F}=\mathbb{F}_q$ be a finite field with $q$ elements, where $q=p^m$ for some prime $p$ and $m\in \mathbb{Z}_{\geq 1}$. Given any field extension $\tilde{\mathbb{F}}/\mathbb{F}_q$ and any automorphism $\sigma$ of $\mathbb{F}_q$, that is, $\sigma(a):=a^{p^j}$ for any $a\in \mathbb{F}_q$ and some integer $j$ such that $1\leq j \leq m$, we see that one can always extend trivially $\sigma$ to an automorphism  $\Tilde{\sigma}:\tilde{\mathbb{F}}\to \tilde{\mathbb{F}}$ such that $\Tilde{\sigma}|_{\mathbb{F}_q}=\sigma$ by defining $\tilde{\sigma}(b):=b^{p^{j}}$ for any $b\in \tilde{\mathbb{F}}_{q}$. Moreover, 
since any $\sigma$-derivation $\delta$ is an inner derivation (see Proposition \ref{inner derivations}), that is, $\delta_{\beta}(a):=\beta(\sigma(a)-a)$ for any $a\in \mathbb{F}_q$ and some $\beta \in \mathbb{F}_q$, we can also extend trivially $\delta_{\beta}$ to a $\tilde{\sigma}$-derivation $\tilde{\delta}_{\beta}:\tilde{\mathbb{F}}\to \tilde{\mathbb{F}}$ such that $\tilde{\delta_{\beta}}_{|\mathbb{F}_q}=\delta_{\beta}$ by defining $\Tilde{\delta}_{\beta}(b):=\beta(\Tilde{\sigma}(b)-b)$ for any $b\in \tilde{\mathbb{F}}$. This gives a special polynomial ring extension $\tilde{\mathbb{F}}[x;\Tilde{\sigma},\Tilde{\delta}]$ of $\mathbb{F}_q[x;\sigma,\delta]$ such that $N_{i}^{\tilde{\sigma},\tilde{\delta}}(y)=N_{i}^{\sigma,\delta}(y)$ for any $y\in \tilde{\mathbb{F}}$ and $i\in \mathbb{Z}_{\geq 0}$.
\end{remark}
  
\medskip    

\noindent The above remark shows that if $\mathbb{F}$ is a finite division ring (i.e. a finite field), then we can always construct a suitable polynomial ring extension. So, by Remark \ref{Res1.14} we can obtain the following result.
 
\begin{theorem}\label{Res1.15} 
Two non-constant skew polynomials $f,g\in \mathbb{F}_q[x;\sigma,\delta]$ have a common right root in some polynomial ring extension $\tilde{\mathbb{F}}[x;\Tilde{\sigma},\Tilde{\delta}]$ of $\mathbb{F}_q[x;\sigma,\delta]$ if and only if $\text{R}^{\sigma,\delta}_{\mathbb{F}_q}(f,g)=0$.
\end{theorem}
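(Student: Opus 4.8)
\emph{Overview.} Both implications will be reduced to Theorem \ref{Res1.5} (the list of conditions equivalent to $R^{\sigma,\delta}_{\mathbb{F}_q}(f,g)=0$), the product‑evaluation formula of Theorem \ref{F1}, the evaluation formula of Lemma \ref{F}, and the explicit extensions of $\sigma$ and $\delta$ over finite fields described in Remark \ref{Res1.14}; the genuinely new ingredient is the passage from a common right \emph{factor} to a common right \emph{root} in a suitable extension. For the ``only if'' part, suppose $R^{\sigma,\delta}_{\mathbb{F}_q}(f,g)\neq0$. Then condition $4)$ of Theorem \ref{Res1.5} fails, so there are $p,q\in\mathbb{F}_q[x;\sigma,\delta]$ with $pf+qg=1$, and this identity still holds in every polynomial ring extension $\tilde{\mathbb F}[x;\tilde\sigma,\tilde\delta]$. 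If $a\in\tilde{\mathbb F}$ were a common right root of $f$ and $g$, then $x-a$ would be a right divisor of $f$ and of $g$ in $\tilde{\mathbb F}[x;\tilde\sigma,\tilde\delta]$, hence of $pf+qg=1$, which is impossible because that ring has no zero divisors and degrees add. So no common right root can exist.

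\emph{The ``if'' part: reduction.} Assume $R^{\sigma,\delta}_{\mathbb{F}_q}(f,g)=0$. By Theorem \ref{Res1.5}, $f$ and $g$ have a non‑unit common right factor $r$, of some degree $d\geq1$; write $f=f_1r$ and $g=g_1r$. It suffices to exhibit a polynomial ring extension $\tilde{\mathbb F}[x;\tilde\sigma,\tilde\delta]$ and an $a\in\tilde{\mathbb F}$ with $r(a)=0$: then $f(a)=(f_1r)(a)=0$ and $g(a)=(g_1r)(a)=0$ by Theorem \ref{F1}$(1)$, so $a$ is the desired common right root. Since $\mathbb{F}_q$ is finite, $\sigma$ is an automorphism, say $\sigma(b)=b^{p^{s}}$ with $q=p^{m}$ and $s\geq0$, and $\delta=\delta_\beta$ is inner with $\delta_\beta(b)=\beta\bigl(\sigma(b)-b\bigr)$ for some $\beta\in\mathbb{F}_q$ (Proposition \ref{inner derivations}; and $\delta=0$ if $\sigma=\mathrm{Id}$). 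By Remark \ref{Res1.14}, for every $N\geq1$ the maps $\tilde\sigma(b):=b^{p^{s}}$ and $\tilde\delta(b):=\beta(\tilde\sigma(b)-b)$ on $\mathbb{F}_{q^{N}}$ make $\mathbb{F}_{q^{N}}[x;\tilde\sigma,\tilde\delta]$ a polynomial ring extension of $\mathbb{F}_q[x;\sigma,\delta]$.

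\emph{Finding the root.} An induction on $i$, using the recursion $N^{\tilde\sigma,\tilde\delta}_{i}(a)=\tilde\sigma\bigl(N^{\tilde\sigma,\tilde\delta}_{i-1}(a)\bigr)a+\tilde\delta\bigl(N^{\tilde\sigma,\tilde\delta}_{i-1}(a)\bigr)$ of Lemma \ref{F}, together with the characteristic‑$p$ identity $\bigl(\sum_k d_kT^{k}\bigr)^{p^{s}}=\sum_k d_k^{p^{s}}T^{kp^{s}}$ and the closure of $\mathbb{F}_q$ under $b\mapsto b^{p^{s}}$, shows that $a\mapsto N^{\tilde\sigma,\tilde\delta}_{i}(a)$ is, on $\mathbb{F}_{q^{N}}$, the evaluation of a fixed monic polynomial $\psi_i\in\mathbb{F}_q[T]$ of degree $e_i:=\sum_{j=0}^{i-1}p^{js}$, independent of $N$; explicitly $\psi_0=1$ and $\psi_i(T)=\psi_{i-1}(T)^{p^{s}}\,T+\beta\bigl(\psi_{i-1}(T)^{p^{s}}-\psi_{i-1}(T)\bigr)$. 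Writing $r=\sum_{i=0}^{d}c_ix^{i}$ with $c_d\neq0$, Lemma \ref{F} gives $r(a)=\sum_{i=0}^{d}c_i\psi_i(a)=\psi(a)$ for $a\in\mathbb{F}_{q^{N}}$, where $\psi:=\sum_{i=0}^{d}c_i\psi_i\in\mathbb{F}_q[T]$; since $e_0<e_1<\dots<e_d$, the polynomial $\psi$ has degree $e_d\geq1$ and leading coefficient $c_d\neq0$, hence is non‑constant. Choosing $N$ so that $\psi$ has a root $a\in\mathbb{F}_{q^{N}}$ — possible since $\psi$ splits over $\overline{\mathbb{F}_q}$ — we obtain $r(a)=\psi(a)=0$ in $\mathbb{F}_{q^{N}}[x;\tilde\sigma,\tilde\delta]$, which completes the proof.

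\emph{Main obstacle.} The only non‑formal step is the middle one: checking that, along the canonical extensions of Remark \ref{Res1.14}, the $(\sigma,\delta)$‑norm maps $N_i$ degenerate on finite fields to ordinary one‑variable polynomial maps over $\mathbb{F}_q$, so that the condition ``$r(a)=0$'' becomes an ordinary polynomial equation, solvable in a finite extension. This is precisely where finiteness of $\mathbb{F}_q$ enters; over an infinite division ring one would instead have to construct a genuine ``closure'' in the sense illustrated above for the passage from $\mathbb{C}$ to $\mathbb{H}$.
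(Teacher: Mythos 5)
Your proof is correct and follows essentially the same route as the paper's: reduce to a common (non-unit) right factor via Theorem \ref{Res1.5}, observe that over $\mathbb{F}_q$ the right evaluation $a\mapsto\sum_i c_iN_i^{\sigma,\delta}(a)$ of that factor is an ordinary one-variable polynomial map with coefficients in $\mathbb{F}_q$, find a root in a finite field extension equipped with the trivial extensions of $\sigma$ and $\delta$ from Remark \ref{Res1.14}, and lift it back to a common right root of $f$ and $g$ by Theorem \ref{F1}. The only differences are cosmetic: in the forward direction you use the B\'ezout condition $4)$ of Theorem \ref{Res1.5} and a degree argument where the paper uses the invariance of the Sylvester matrix (hence of the resultant) under the extension, and in the converse you make explicit, via the strictly increasing degrees $e_i$, why the resulting polynomial $\psi$ is non-constant --- a point the paper leaves implicit when it appeals to ``classical field theory''.
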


\begin{proof}
If $f(x)$ and $g(x)$ have a common right root $\alpha$ in some polynomial ring extension $\tilde{\mathbb{F}}[x;\Tilde{\sigma},\Tilde{\delta}]$ of $\mathbb{F}_q[x;\sigma,\delta]$, then  $f(x)=f_1(x)(x-\alpha)$ and $g(x)=g_1(x)(x-\alpha)$, for some $f_1(x),g_1(x) \in \tilde{\mathbb{F}}[x;\tilde{\sigma},\tilde{\delta}]$. By Theorem \ref{Res1.5}, since $f(x)$ and $g(x)$ have a common (non-unit) right factor $(x-\alpha)$ in $\tilde{\mathbb{F}}[x;\tilde{\sigma},\tilde{\delta}]$ it follows that $\text{R}^{\tilde{\sigma},\tilde{\delta}}_{\tilde{\mathbb{F}}}(f,g)=0$. However, since $\Tilde{\sigma}_{|{\mathbb{F}_{q}}}=\sigma$ and $\Tilde{\delta}_{|{\mathbb{F}_{q}}}=\delta$, we obtain that $\text{R}^{\sigma,\delta}_{\mathbb{F}_{q}}(f,g)=\text{R}^{\tilde{\sigma},\tilde{\delta}}_{\tilde{\mathbb{F}}}(f,g)=0$.
Conversely, if $\text{R}^{\sigma,\delta}_{\mathbb{F}_q}(f,g)=0$ then $f$ and $g$ have a common (non-unit) right factor $h(x):=\sum_{i}h_ix^{i}\in \mathbb{F}_{q}[x;\sigma,\delta]$. Thus, we can write $f(x)=f'(x)h(x)$ and $g(x)=g'(x)h(x)$, for some $f'(x),g'(x)\in \mathbb{F}_{q}[x;\sigma,\delta]$. If $h(x)$ has a right root in $\mathbb{F}_{q}$ then we are done. Otherwise, since any endomorphism $\sigma$ of $\mathbb{F}_{q}$ is an automorphism of the form $\sigma(a)=a^{p^{j}}$ for some integer $j$ such that $1\leq j \leq m$ and each $\delta$ is an inner derivation, observe that $\sum_{i}h_iN_{i}^{\sigma,\delta}(y)\in \mathbb{F}_{q}[y]$. Therefore, from classical field theory it follows that there exists a field extension $\tilde{\mathbb{F}}$ of $\mathbb{F}_{q}$ such that $\sum_{i}h_iN_{i}^{\sigma,\delta}(\tilde{y})=0$ for some $\tilde{y}\in \tilde{\mathbb{F}}$. So considering the special polynomial ring extension $\tilde{\mathbb{F}}[x;\Tilde{\sigma},\Tilde{\delta}]$ of $\mathbb{F}_{q}[x;\sigma,\delta]$ of Remark \ref{Res1.14} with $\tilde{\mathbb{F}}$ as above, we have $h(\tilde{y})=\sum_{i}h_iN_{i}^{\tilde{\sigma},\tilde{\delta}}(\tilde{y})=\sum_{i}h_iN_{i}^{\sigma,\delta}(\tilde{y})=0$ in $\tilde{\mathbb{F}}[x;\Tilde{\sigma},\Tilde{\delta}]$. Since $\mathbb{F}_{q}[x;\sigma,\delta]\subseteq \tilde{\mathbb{F}}[x;\tilde{\sigma},\tilde{\delta}]$, we can conclude by Theorem \ref{F1} that there exists $\tilde{y}\in \tilde{\mathbb{F}}$ such that $f(\tilde{y})=g(\tilde{y})=0$ in $\tilde{\mathbb{F}}[x;\tilde{\sigma},\tilde{\delta}]$, that is, $f$ and $g$ have a common right root in some polynomial ring extension of $\mathcal{R}$.
\end{proof}

\begin{corollary}\label{Res1.16}
 Let $\mathbb{F}$ be a division ring and let $f,g\in\mathcal{R}$ be two non-constant skew polynomials. If $f$ and $g$ have a common right root in some polynomial ring extension $\tilde{\mathbb{F}}[x;\Tilde{\sigma},\Tilde{\delta}]$ of $\mathcal{R}$, then $\text{R}^{\sigma,\delta}_{\mathbb{F}}(f,g)=0$. 
\end{corollary}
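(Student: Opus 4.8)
The plan is to reuse the ``if'' half of the proof of Theorem \ref{Res1.15} almost verbatim: finiteness of the base ring was never really needed there, only the equivalence $1)\Leftrightarrow 2)$ of Theorem \ref{Res1.5} applied \emph{inside} the larger ring, together with the observation that the right $(\sigma,\delta)$-Sylvester matrix of $f$ and $g$ is the very same matrix whether it is read over $\mathbb{F}$ or over $\tilde{\mathbb{F}}$. The only point that needs slightly more care in the present generality is the passage from ``its Dieudonn\'e determinant vanishes over $\tilde{\mathbb{F}}$'' to ``its Dieudonn\'e determinant vanishes over $\mathbb{F}$''.

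First I would pass from a common right root to a common (non-unit) right factor. Let $\alpha\in\tilde{\mathbb{F}}$ be a common right root of $f$ and $g$ in $\tilde{\mathbb{F}}[x;\tilde{\sigma},\tilde{\delta}]$, so that $f(\alpha)=g(\alpha)=0$ for the right evaluation associated with $\tilde{\sigma},\tilde{\delta}$. By the definition of right evaluation this says that the remainder of right division of each of $f,g$ by $x-\alpha$ is zero, hence $f=f_1\cdot(x-\alpha)$ and $g=g_1\cdot(x-\alpha)$ for suitable $f_1,g_1\in\tilde{\mathbb{F}}[x;\tilde{\sigma},\tilde{\delta}]$; in particular $x-\alpha$ is a common (non-unit) right factor of $f$ and $g$ in $\tilde{\mathbb{F}}[x;\tilde{\sigma},\tilde{\delta}]$. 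Since $\tilde{\mathbb{F}}$ is again a division ring and $f,g$ are still non-constant, the equivalence $1)\Leftrightarrow 2)$ of Theorem \ref{Res1.5}, applied to $\tilde{\mathbb{F}}[x;\tilde{\sigma},\tilde{\delta}]$, yields $R^{\tilde{\sigma},\tilde{\delta}}_{\tilde{\mathbb{F}}}(f,g)=0$, that is, $\text{Ddet}$ of $\text{Sylv}^{\tilde{\sigma},\tilde{\delta}}_{\tilde{\mathbb{F}}}(f,g)$ is $0$ over $\tilde{\mathbb{F}}$.

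Next I would identify the two Sylvester matrices. Writing $f=\sum_i a_ix^i$ and $g=\sum_j b_jx^j$ with $a_i,b_j\in\mathbb{F}$, every entry of $\text{Sylv}^{\tilde{\sigma},\tilde{\delta}}_{\tilde{\mathbb{F}}}(f,g)$ is, by construction, a finite $\mathbb{Z}$-combination of values $\mathcal{C}_{d,s}(a_i)$ and $\mathcal{C}_{d,s}(b_j)$, where the $\mathcal{C}_{d,s}$ are built from compositions of $\tilde{\sigma}$ and $\tilde{\delta}$ (Definition \ref{Def Cij}). Because $\mathbb{F}$ is stable under $\sigma$ and $\delta$ and $\tilde{\sigma}|_{\mathbb{F}}=\sigma$, $\tilde{\delta}|_{\mathbb{F}}=\delta$, any such composition applied to an element of $\mathbb{F}$ stays in $\mathbb{F}$ and returns exactly the value of the corresponding composition of $\sigma$ and $\delta$. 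Hence $\text{Sylv}^{\tilde{\sigma},\tilde{\delta}}_{\tilde{\mathbb{F}}}(f,g)$ and $\text{Sylv}^{\sigma,\delta}_{\mathbb{F}}(f,g)$ are literally one and the same square matrix $A$, all of whose entries lie in $\mathbb{F}$.

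Finally I would descend the vanishing of the determinant. The relevant fact is that the Dieudonn\'e determinant of a square matrix over a division ring is $0$ precisely when that matrix is not invertible over the division ring. Now $\text{Ddet}\,A=0$ when $A$ is regarded over $\tilde{\mathbb{F}}$; if $A$ were invertible over $\mathbb{F}$, its inverse would have entries in $\mathbb{F}\subseteq\tilde{\mathbb{F}}$ and would still be a two-sided inverse of $A$ over $\tilde{\mathbb{F}}$, a contradiction. Therefore $A$ is not invertible over $\mathbb{F}$ either, i.e. $R^{\sigma,\delta}_{\mathbb{F}}(f,g)=\text{Ddet}\,A=0$, which is the claim. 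I do not expect a genuine obstacle: the only non-bookkeeping step is this last descent, and it reduces to the triviality that a matrix invertible over a ring remains invertible over any overring; the rest is the bookkeeping already carried out for Theorem \ref{Res1.15}.
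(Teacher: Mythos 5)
Your proof is correct and follows essentially the same route as the paper, which simply invokes the first part of the proof of Theorem \ref{Res1.15}: pass from the common right root to the common right factor $x-\alpha$, apply the equivalence $1)\Leftrightarrow 2)$ of Theorem \ref{Res1.5} over $\tilde{\mathbb{F}}$, and observe that $\text{Sylv}^{\tilde{\sigma},\tilde{\delta}}_{\tilde{\mathbb{F}}}(f,g)$ and $\text{Sylv}^{\sigma,\delta}_{\mathbb{F}}(f,g)$ coincide since $\tilde{\sigma}|_{\mathbb{F}}=\sigma$ and $\tilde{\delta}|_{\mathbb{F}}=\delta$. Your extra paragraph justifying the descent of the vanishing of the Dieudonn\'e determinant from $\tilde{\mathbb{F}}$ to $\mathbb{F}$ (via non-invertibility) is a welcome refinement of a step the paper states only as an identity of resultants, but it does not change the underlying argument.
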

 
\begin{proof}
 It follows easily from the first part of the proof of Theorem \ref{Res1.15}.
\end{proof}
 
An interesting problem would be to determine, in general, when the reciprocal of Corollary \ref{Res1.16}  is true. We know that if $\text{R}^{\sigma,\delta}_{\mathbb{F}}(f,g)=0$ then $f$ and $g$ have a common (non-unit) right factor $h\in\mathcal{R}$. Then, the existence of a common right root between $f$ and $g$ is reduced to guarantee the existence of some polynomial ring extension where $h$ has a right root. If $\mathbb{F}=\mathbb{F}_q$ is a finite field, then we have seen in Theorem \ref{Res1.15} that for any skew polynomial $h\in \mathbb{F}_q[x;\sigma,\delta]$ we can find a polynomial ring extension where $h$ has a right root. If $\mathbb{F}=\mathbb{C}$, then for the case $\mathbb{C}[x;\sigma,\delta]$ with $\sigma$ the complex conjugation and $\delta$ an inner derivation, we know that $\mathbb{H}[x;\sigma,\delta]$ is a ``closure" of $\mathbb{C}[x;\sigma,\delta]$. Therefore, every irreducible skew polynomial $h\in \mathbb{C}[x;\sigma,\delta]$ has a right root in $\mathbb{H}[x;\sigma,\delta]$ and then the reciprocal of Corollary \ref{Res1.16} is true also in this case.

\medskip

Thus, one could ask in which other cases the reciprocal of Corollary \ref{Res1.16} is true. The following result gives a partial answer when $\mathbb{F}$ is an infinite division ring. 

\begin{proposition}\label{Res1.18}
Let $\mathbb{F}$ be an infinite division ring and  let $\sigma$ be an inner automorphism of $\mathbb{F}$. Skew polynomials $f,g\in \mathbb{F}[x;\sigma]=\mathbb{F}[x;\sigma,0]$ have a common right root in some polynomial ring extension $\tilde{\mathbb{F}}[x;\Tilde{\sigma}]$ of $\mathbb{F}[x;\sigma]$ if and only if $\text{R}^{\sigma,0}_{\mathbb{F}}(f,g)=0$.
\end{proposition}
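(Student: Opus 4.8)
The forward implication is Corollary \ref{Res1.16}, which holds over any division ring: a common right root in a polynomial ring extension forces $\mathrm{R}^{\sigma,0}_{\mathbb{F}}(f,g)=0$. So the plan is to prove the converse. Assume $\mathrm{R}^{\sigma,0}_{\mathbb{F}}(f,g)=0$. By Theorem \ref{Res1.5}, $f$ and $g$ have a common non-unit right factor $h\in\mathbb{F}[x;\sigma]$; write $f=f_{1}h$ and $g=g_{1}h$ with $\deg h\geq 1$. By Theorem \ref{F1}, any right root $\alpha$ of $h$ in a polynomial ring extension satisfies $f(\alpha)=(f_{1}h)(\alpha)=0$ and $g(\alpha)=(g_{1}h)(\alpha)=0$; hence it suffices to produce a polynomial ring extension $\tilde{\mathbb{F}}[x;\tilde{\sigma}]$ of $\mathbb{F}[x;\sigma]$ in which $h$ has a right root.

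The next step is to pass to a central variable. Since $\sigma$ is inner, write $\sigma(a)=cac^{-1}$ for a fixed $c\in\mathbb{F}^{*}$ and set $t:=c^{-1}x$. From $xa=\sigma(a)x$ one gets $ta=at$ for all $a\in\mathbb{F}$, so $t$ is central in $\mathbb{F}[x;\sigma]$, and $x^{i}=c^{i}t^{i}$, so $\mathbb{F}[x;\sigma]=\mathbb{F}[t]$ is the polynomial ring over $\mathbb{F}$ in the central variable $t$. For any division ring extension $\tilde{\mathbb{F}}\supseteq\mathbb{F}$, taking $\tilde{\sigma}$ to be conjugation by $c$ on $\tilde{\mathbb{F}}$ (an automorphism restricting to $\sigma$) and $\tilde{\delta}=0$ yields a polynomial ring extension $\tilde{\mathbb{F}}[x;\tilde{\sigma}]=\tilde{\mathbb{F}}[t]$ in the sense of Definition \ref{Res1.11}. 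Rewrite $h=\sum_{i}h_{i}x^{i}=\sum_{i}(h_{i}c^{i})t^{i}=:p(t)$, a non-constant polynomial. Using Lemma \ref{F} together with the identity $N_{i}^{\tilde{\sigma},0}(\alpha)=c^{i}(c^{-1}\alpha)^{i}$ (an easy induction from the recursion of Lemma \ref{F}, since $\tilde{\sigma}$ is conjugation by $c$ and $\tilde{\delta}=0$), one computes $h(\alpha)=\sum_{i}(h_{i}c^{i})(c^{-1}\alpha)^{i}$. Therefore $\alpha\in\tilde{\mathbb{F}}$ is a right root of $h$ if and only if $\beta:=c^{-1}\alpha$ satisfies $\sum_{i}(h_{i}c^{i})\beta^{i}=0$, that is, if and only if $\alpha=c\beta$ for some right root $\beta$ of $p(t)$ in $\tilde{\mathbb{F}}[t]$.

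It then remains to find a division ring extension $\tilde{\mathbb{F}}\supseteq\mathbb{F}$ carrying an element $\beta$ with $\sum_{i}(h_{i}c^{i})\beta^{i}=0$. First I would reduce to the case where $p$ is irreducible in the (left and right) PID $\mathbb{F}[t]$: if $p=p_{1}p_{2}$ with $0<\deg p_{2}<\deg p$, then by Theorem \ref{F1} any right root of $p_{2}$ is a right root of $p$, so after finitely many steps we may replace $p$ by an irreducible right factor. If $\deg p=1$ the root already lies in $\mathbb{F}$. The case $\deg p\geq 2$ is the heart of the matter, and it is here that the hypothesis that $\mathbb{F}$ is infinite is used: one invokes the known fact that a non-constant polynomial over an infinite division ring acquires a root in a suitable division ring extension, and feeds that extension into the previous paragraph. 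With such $\tilde{\mathbb{F}}$ and $\beta$ in hand, put $\alpha:=c\beta$; then $h(\alpha)=0$ in $\tilde{\mathbb{F}}[x;\tilde{\sigma}]$, and by the first paragraph $f(\alpha)=g(\alpha)=0$, which proves the converse.

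The main obstacle is exactly this last step. For $\deg p\geq 2$ the naive candidate $\mathbb{F}[t]/\mathbb{F}[t]p$ is merely a simple left $\mathbb{F}[t]$-module, not a ring, because $\mathbb{F}[t]p$ need not be two-sided; constructing a genuine division ring extension in which $p$ gets a root calls either on the structure theory of (central-variable) polynomial rings over division rings or on an embedding of $\mathbb{F}$ into an ``algebraically closed'' division ring, and it is precisely in that construction --- not in any of the reductions above --- that the infiniteness of $\mathbb{F}$ enters. All the remaining ingredients (Theorems \ref{Res1.5} and \ref{F1}, Lemma \ref{F}, and the passage to the central variable via the inner automorphism) are elementary.
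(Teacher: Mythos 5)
Your proposal is correct and follows essentially the same route as the paper: reduce to finding a right root of the common right factor $h$, use the inner automorphism to turn the skew evaluation $\sum_i h_i N_i(\alpha)$ into an ordinary polynomial evaluation (your central variable $t=c^{-1}x$ is just a repackaging of the paper's identity $N_i(a)=g^{1-i}(ag)^ig^{-1}$), and then invoke the existence of a division ring extension in which a non-constant polynomial over an infinite division ring acquires a root --- the paper cites \cite[Theorem 8.5.1]{cohnskew} for exactly the fact you flag as the heart of the matter. The only differences are cosmetic (your optional reduction to an irreducible factor is unnecessary, since that theorem applies to any non-constant polynomial).
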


\begin{proof}
By Corollary \ref{Res1.16}, the left-to-right implication is true. Conversely, suppose that 
$\text{R}^{\sigma,0}_{\mathbb{F}}(f,g)=0$. Then $f(x)=a(x)h(x)$ and $g(x)=b(x)h(x)$, for some $a(x),b(x),h(x)\in \mathbb{F}[x;\sigma]$ with $h(x):=\sum_{i=0}^{n}h_ix^{i}\in \mathbb{F}[x;\sigma]$ of positive degree. If $h(x)$ has a right root in $\mathbb{F}$, then we are done. Otherwise, since $\sigma$ is an inner automorphism, that is, $\sigma(a):=g^{-1}ag$ for all $a\in \mathbb{F}$ and $g\in \mathbb{F}^{*}$, we have 
\begin{center}
    $N_i^{\sigma,0}(a):=N_{i}(a)=g^{1-i}(ag)^{i}g^{-1}$, for all $a\in \mathbb{F}$, $i\in \mathbb{Z}_{\geq 0}.$
\end{center} 
Then, we get
\begin{center}
    $\sum_{i=0}^{n}h_iN_i(a)=\sum_{i=0}^{n}h_ig^{1-i}(ag)^ig^{-1}=(\sum_{i=0}^{n}h_ig^{1-i}(ag)^i)g^{-1}=(\sum_{i=0}^{n}h'_ib^i)g^{-1}$
\end{center}
where $b:=ag$ and $h'_i:=h_ig^{1-i}$ for all $i=0,1,...,n$. Since  $\sum_{i=0}^{n}h_iN_i(a)=0$ if and only if $\sum_{i=0}^{n}h'_ib^i=0$, it is sufficient to guarantee the existence of a right root of  $p(y):=\sum_{i=0}^{n}h'_{i}b^{i}\in \mathbb{F}[y]$. By \cite[Theorem 8.5.1]{cohnskew}, there exists a division ring extension (or skew field extension) $\tilde{\mathbb{F}}$ of $\mathbb{F}$ such that $p$ has a right root, say $p(\alpha)=\sum_{i=0}^{n}h'_i\alpha^{i}=0$ for some $\alpha \in \tilde{\mathbb{F}}$. Thus, defining $\tilde{\sigma}(z):=g^{-1}zg$ for all $z\in \tilde{\mathbb{F}}$ and putting $\beta:=\alpha g^{-1}\in \tilde{\mathbb{F}}$, we have $$h(\beta)=\sum_{i=0}^{n}h_iN_{i}^{\tilde{\sigma},0}(\beta)=\sum_{i=0}^{n}h_iN_{i}(\beta)=\left(\sum_{i=0}^{n}h'_{i}(\beta g)^{i} \right)g^{-1}=p(\beta g)g^{-1}=p(\alpha)g^{-1}=0$$
in $\tilde{\mathbb{F}}[x;\tilde{\sigma}]$. Since $\mathbb{F}[x;\sigma]\subseteq \tilde{\mathbb{F}}[x;\tilde{\sigma}]$, we can conclude again by Theorem \ref{F1} that there exists $\beta \in \tilde{\mathbb{F}}$ such that $f(\beta)=g(\beta)=0$ in $\tilde{\mathbb{F}}[x;\tilde{\sigma}]$, that is,  $f$ and $g$ have a common right root in some polynomial ring extension of $\mathbb{F}[x;\sigma]$. 
\end{proof}

\begin{remark}
Let $\mathbb{F}$ be an infinite division ring. If $\mathbb{F}$ is finite dimensional over its center $Z$, then every automorphism of $\mathbb{F}$ over $Z$ is inner (see \cite[Corollary 3.3.6]{cohnskew}). Therefore, under this hypothesis, the result of Proposition \ref{Res1.18} still holds. Moreover, if $\sigma, \delta$ are both inner, i.e. $\sigma (a):=g^{-1}ag$ and $\delta (a):=\sigma(a)v-va$ with $g\in\mathbb{F}^*$ and $v\in\mathbb{F}$ for all $a\in\mathbb{F}$, then by the change of variable $x':=x+v$, we have a ring isomorphism between $\mathcal{R}$ and $\mathbb{F}[x';\sigma]$ (see \cite[p. 295]{cohnfree}) which allows us to obtain a similar result as in Proposition \ref{Res1.18} for $\mathcal{R}$ and $\tilde{\mathbb{F}}[x;\tilde{\sigma},\tilde{\delta}]$, having in mind Corollary \ref{Res1.16}, Theorem \ref{Res1.5}, Proposition \ref{Res1.18} and the possibility of extending trivially $\sigma$ and $\delta$ as in its proof, that is, $\tilde{\sigma} (b):=g^{-1}bg$ and $\tilde{\delta} (b):=\tilde{\sigma} (b)v-vb$ for all $b\in\tilde{\mathbb{F}}$.
\end{remark}

\subsection{Left $(\sigma,\delta)$-Resultant}\label{LRes}
In Examples \ref{Res1.7} and \ref{Res1.8} we have seen that in general the condition $\text{R}^{\sigma,\delta}_{\mathbb{F}}(f,g)= 0$ is not related with the existence of common (non-unit) left factor of $f$ and $g$. From this, it seems interesting to study the possibility of defining a left $(\sigma,\delta)$-resultant which allows us to guarantee the existence of a common (non-unit) left factor for two skew polynomials. Since not every endomorphism $\sigma$ over a division ring $\mathbb{F}$ is an automorphism (see Remark \ref{Pre2}), we would like to emphasize the fact that in general the left-hand division of two skew polynomials cannot be performed in $\mathcal{R}$ (see e.g. \cite{Ore}). On the other hand, under the assumption that $\sigma$ is an automorphism, one can give a left-hand version of some of the main results shown in $\S$ \ref{RRes}.

\medskip

Keeping in mind that if $\sigma$ is an automorphism then $\mathcal{R}$ is a left Euclidean domain and hence a RPID (see \cite[Theorem 6]{Ore}), it is possible to give a left version of Lemma \ref{Res1.2} as follows.

\begin{lemma} \label{Res3.2}
Let $\sigma$ be an automorphism of $\mathbb{F}$. Two non-constant skew polynomials $f,g\in\mathcal{R}$ of respective degrees $m$ and $n$, have a common (non-unit) left factor in $\mathcal{R}$ if and only if there exist skew polynomials $c,d \in\mathcal{R}$ such that $fc+gd=0$, $\deg(c) < n$ and $\deg(d)< m$.
\end{lemma}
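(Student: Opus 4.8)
The plan is to mirror the proof of Lemma \ref{Res1.2}, but working with left factors and left multiples instead of right ones, exploiting the fact that when $\sigma$ is an automorphism the ring $\mathcal{R}$ is a right principal ideal domain (RPID) by \cite[Theorem 6]{Ore}. The key structural ingredient is the left-module analogue of Lemma \ref{Res1.1}: for non-constant $f,g\in\mathcal{R}$ one has that $f\mathcal{R}/\cdot$ and the relevant quotients are right $\mathbb{F}$-modules whose dimensions are computed by the degrees, and if $f\mathcal{R}\cap g\mathcal{R}=h\mathcal{R}$ and $f\mathcal{R}+g\mathcal{R}=k\mathcal{R}$ then $\deg(f)+\deg(g)=\deg(h)+\deg(k)$. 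This follows from exactly the same argument as Lemma \ref{Res1.1}, with the roles of left and right interchanged, using that every coset in $f\mathcal{R}/g\mathcal{R}$ has a unique representative of bounded degree (now via the left Euclidean algorithm).

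First I would prove the forward implication. Suppose $f$ and $g$ have a common non-unit left factor, say $f=ra$ and $g=rb$ with $\deg(r)\geq 1$. Then $\deg(a)=m-\deg(r)\leq m-1$ and $\deg(b)=n-\deg(r)\leq n-1$. Set $c:=b$ and $d:=-a$; then $fc+gd=rab-rba$, which is not obviously zero, so this naive choice fails. Instead I would argue via the least common left multiple: since $\mathcal{R}$ is an RPID, write $f\mathcal{R}\cap g\mathcal{R}=\ell\mathcal{R}$ where $\ell=\mathrm{lclm}(f,g)$, and note $f\mathcal{R}+g\mathcal{R}=r'\mathcal{R}$ for some common non-unit left factor $r'$ (which exists precisely because $f$ and $g$ share a non-unit left factor, so the sum ideal is proper). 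Write $\ell=fc=gd'$ for suitable $c,d'\in\mathcal{R}$; then $fc-gd'=0$, i.e. $fc+g(-d')=0$, and by the degree formula $\deg(c)=\deg(\ell)-m=(m+n-\deg(r'))-m=n-\deg(r')<n$ and similarly $\deg(d')=m-\deg(r')<m$. Setting $d:=-d'$ finishes this direction; note $c,d\neq 0$ because $\ell\neq 0$.

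Conversely, suppose $c,d\in\mathcal{R}$ exist with $fc+gd=0$, $\deg(c)<n$, $\deg(d)<m$, not both zero (if one is zero the other must be too since $\mathcal{R}$ has no zero divisors, so both are nonzero). Then $fc=-gd=:\mu$ is a nonzero common left multiple of $f$ and $g$, so $\mu\in f\mathcal{R}\cap g\mathcal{R}=\ell\mathcal{R}$ with $\deg(\mu)=\deg(f)+\deg(c)<m+n$. Hence $\deg(\ell)\leq\deg(\mu)<m+n$, and from the degree formula $\deg(f)+\deg(g)=\deg(\ell)+\deg(r')$ with $r'$ generating $f\mathcal{R}+g\mathcal{R}$, we get $\deg(r')=m+n-\deg(\ell)>0$, so $r'$ is a non-unit common left factor of $f$ and $g$.

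The main obstacle I anticipate is establishing the left-module version of Lemma \ref{Res1.1} cleanly, in particular the isomorphism $f\mathcal{R}/fh\mathcal{R}\cong\mathcal{R}/h\mathcal{R}$ as right $\mathbb{F}$-modules: one must check that the map $p\mapsto fp+fh\mathcal{R}$ is well-defined, surjective, right $\mathbb{F}$-linear, and has kernel exactly $h\mathcal{R}$, which requires that $\mathcal{R}$ has no zero divisors (item 1 of the basic properties) and that $\sigma$ being an automorphism guarantees the right division algorithm, so that $\dim_\mathbb{F}\mathcal{R}/h\mathcal{R}=\deg(h)$ on the right. Once this is in place the degree-counting argument is routine and parallels \cite[Theorem 2.4]{ResEric} verbatim with left and right swapped; accordingly one could, as the authors do for Lemma \ref{Res1.2}, simply state it and refer to the mirrored argument.
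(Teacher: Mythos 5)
Your argument is correct and is exactly the intended one: the paper states this lemma without proof as the left--right mirror of Lemma \ref{Res1.2} (itself proved as in \cite[Theorem 2.4]{ResEric}), and your route via the degree identity $\deg(f)+\deg(g)=\deg(\ell)+\deg(r')$ for $f\mathcal{R}\cap g\mathcal{R}=\ell\mathcal{R}$, $f\mathcal{R}+g\mathcal{R}=r'\mathcal{R}$ in the RPID $\mathcal{R}$ is precisely that mirrored argument, including the correct observation that the naive choice $c=b$, $d=-a$ fails in the noncommutative setting. Your explicit insertion of the (tacitly intended) hypothesis that $c,d$ are not both zero is also the right reading of the statement.
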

 
\noindent By Lemmas \ref{Res3.1} and  \ref{Res3.2}, we can define a left $(\sigma,\delta)$-resultant as follows.
Let \begin{align*}
f &= a_mx^m+...+a_1x+a_0, a_m \neq 0 \;,\;\;\;\; g=b_nx^n+...+b_1x+b_0, b_n \neq 0\;,\\
 c &= c_{n-1}x^{n-1}+...+c_1x+c_0 \;,\;\;\;\;\;\;\;\;\;\;\;\;d= d_{m-1}x^{m-1}+...+d_1x+d_0
\end{align*}
be skew  polynomials as in Lemma \ref{Res3.2}. By Lemma \ref{Res3.1}, we can write
\begin{align*}
f &= x^m A_m+...+xA_1+A_0, A_m \neq 0 \;,\;\;\;\; g=x^nB_n+...+xB_1+B_0, B_n \neq 0\;,\\
 c &= x^{n-1}C_{n-1}+...+xC_1+C_0 \;,\;\;\;\;\;\;\;\;\;\;\;\;d= x^{m-1}D_{m-1}+...+xD_{1}+D_0
\end{align*}
where $A_i, B_i,C_i,D_i$ are given by \eqref{For3.3}. Then, by \eqref{For1.3bis} we have
\begin{center}
$fc= \displaystyle{\sum_{i=0}^{m}\sum_{j=0}^{n-1}\left(\sum_{k=0}^{j}x^{i+j-k}(-1)^k \mathcal{T}_{k,j-k}(A_i)\cdot C_j\right)}$\\ $gd=\displaystyle{\sum_{i=0}^{n}\sum_{j=0}^{m-1}\left(\sum_{k=0}^{j} x^{i+j-k}(-1)^k \mathcal{T}_{k,j-k}(B_i)\cdot D_j\right)}$  
\end{center}
Thus the equation $fc+ gd=0$ of Lemma \ref{Res3.2} gives a homogeneous system of $m + n$ linear equations with $m + n$ unknowns $C_0,..., C_{n-1}, D_0,..., D_{m-1}$, that is 
\begin{equation} \label{For9}
M\cdot (C_0, ..., C_{n-1}, D_0, ..., D_{m-1})^{T}=(0,...,0)\;,
\end{equation}
where $M$ is the following $(m+n)\times(m+n)$ matrix:

\begin{center}
\resizebox{16,5 cm}{4cm}{
\begin{tabular}{c} 
$M=\begin{pmatrix}
A_0& A_1 & A_2 & \cdots & A_m & 0 & 0 & 0& \cdots  & 0 \\ 
-\mathcal{T}_{1,0}(A_0)& \displaystyle\sum_{i=0}^{1}(-1)^{1+i}\mathcal{T}_{1-i,i}(A_{1-i}) &  \displaystyle\sum_{i=0}^{1}(-1)^{1+i}\mathcal{T}_{1-i,i}(A_{2-i}) & \cdots & \displaystyle\sum_{i=0}^{1}(-1)^{1+i}\mathcal{T}_{1-i,i}(A_{m-i}) & \mathcal{T}_{0,1}(A_m)  & 0& 0 &\cdots & 0 \\ 
\mathcal{T}_{2,0}(A_0) &  \displaystyle\sum_{i=0}^{1}(-1)^{2+i}\mathcal{T}_{2-i,i}(A_{1-i}) & \displaystyle\sum_{i=0}^{2}(-1)^{2+i}\mathcal{T}_{2-i,i}(A_{2-i}) & \cdots & \displaystyle\sum_{i=0}^{2}(-1)^{2+i}\mathcal{T}_{2-i,i}(A_{m-i})& \displaystyle\sum_{i=1}^{2}(-1)^{2+i}\mathcal{T}_{2-i,i}(A_{m+1-i}) & \mathcal{T}_{0,2}(A_m) & 0 & \cdots &0 \\ 
-\mathcal{T}_{3,0}(A_0) &  \displaystyle\sum_{i=0}^{1}(-1)^{3+i}\mathcal{T}_{3-i,i}(A_{1-i}) & \displaystyle\sum_{i=0}^{2}(-1)^{3+i}\mathcal{T}_{3-i,i}(A_{2-i}) & \cdots & \displaystyle\sum_{i=0}^{3}(-1)^{3+i}\mathcal{T}_{3-i,i}(A_{m-i})& \displaystyle\sum_{i=1}^{3}(-1)^{3+i}\mathcal{T}_{3-i,i}(A_{m+1-i}) & \displaystyle\sum_{i=2}^{3}(-1)^{3+i}\mathcal{T}_{3-i,i}(A_{m+1-i}) & 
\mathcal{T}_{0,3}(A_m) & \cdots &0 \\
\vdots & \vdots & \vdots & \cdots  & \vdots & \vdots  & \vdots & \ddots  & \ddots& \vdots\\ 
(-1)^{n-1}\mathcal{T}_{n-1,0}(A_0) & \displaystyle\sum_{i=0}^{1}(-1)^{n-1+i}\mathcal{T}_{n-1-i,i}(A_{1-i}) & \displaystyle\sum_{i=0}^{2}(-1)^{n-1+i}\mathcal{T}_{n-1-i,i}(A_{2-i}) & \cdots & \displaystyle\sum_{i=0}^{m}(-1)^{n-1+i}\mathcal{T}_{n-1-i,i}(A_{m-i})& \displaystyle\sum_{i=1}^{n-1}(-1)^{n-1+i}\mathcal{T}_{n-1-i,i}(A_{m+1-i}) & \displaystyle\sum_{i=2}^{n-1}(-1)^{n-1+i}\mathcal{T}_{n-1-i,i}(A_{m+2-i}) & \displaystyle\sum_{i=3}^{n-1}(-1)^{n-1+i}\mathcal{T}_{n-1-i,i}(A_{m+3-i}) & \cdots & \mathcal{T}_{0,n-1}(A_m) \\
B_0& B_1 & B_2 & \cdots & B_n & 0 & 0 & 0& \cdots  & 0 \\ 
-\mathcal{T}_{1,0}(B_0)& \displaystyle\sum_{i=0}^{1}(-1)^{1+i}\mathcal{T}_{1-i,i}(B_{1-i}) &  \displaystyle\sum_{i=0}^{1}(-1)^{1+i}\mathcal{T}_{1-i,i}(B_{2-i}) & \cdots & \displaystyle\sum_{i=0}^{1}(-1)^{1+i}\mathcal{T}_{1-i,i}(B_{n-i}) & \mathcal{T}_{0,1}(B_n)  & 0& 0 &\cdots & 0 \\ 
\mathcal{T}_{2,0}(B_0) &  \displaystyle\sum_{i=0}^{1}(-1)^{2+i}\mathcal{T}_{2-i,i}(B_{1-i}) & \displaystyle\sum_{i=0}^{2}(-1)^{2+i}\mathcal{T}_{2-i,i}(B_{2-i}) & \cdots & \displaystyle\sum_{i=0}^{2}(-1)^{2+i}\mathcal{T}_{2-i,i}(B_{n-i})& \displaystyle\sum_{i=1}^{2}(-1)^{2+i}\mathcal{T}_{2-i,i}(B_{n+1-i}) & \mathcal{T}_{0,2}(B_n) & 0 & \cdots &0 \\ 
-\mathcal{T}_{3,0}(B_0) &  
\displaystyle\sum_{i=0}^{1}(-1)^{3+i}\mathcal{T}_{3-i,i}(B_{1-i}) & \displaystyle\sum_{i=0}^{2}(-1)^{3+i}\mathcal{T}_{3-i,i}(B_{2-i}) & \cdots & \displaystyle\sum_{i=0}^{3}(-1)^{3+i}\mathcal{T}_{3-i,i}(B_{n-i})& \displaystyle\sum_{i=1}^{3}(-1)^{3+i}\mathcal{T}_{3-i,i}(B_{n+1-i}) & \displaystyle\sum_{i=2}^{3}(-1)^{3+i}\mathcal{T}_{3-i,i}(B_{n+1-i}) & 
\mathcal{T}_{0,3}(A_m) & \cdots &0 \\
\vdots & \vdots & \vdots & \cdots  & \vdots & \vdots  & \vdots & \ddots  & \ddots& \vdots\\ 
(-1)^{m-1}\mathcal{T}_{m-1,0}(B_0) & \displaystyle\sum_{i=0}^{1}(-1)^{m-1+i}\mathcal{T}_{m-1-i,i}(B_{1-i}) & \displaystyle\sum_{i=0}^{2}(-1)^{m-1+i}\mathcal{T}_{m-1-i,i}(B_{2-i}) & \cdots & \displaystyle\sum_{i=0}^{n}(-1)^{m-1+i}\mathcal{T}_{m-1-i,i}(B_{n-i})& \displaystyle\sum_{i=1}^{m-1}(-1)^{m-1+i}\mathcal{T}_{m-1-i,i}(B_{n+1-i}) & \displaystyle\sum_{i=2}^{m-1}(-1)^{m-1+i}\mathcal{T}_{m-1-i,i}(B_{n+2-i}) & \displaystyle\sum_{i=3}^{m-1}(-1)^{m-1+i}\mathcal{T}_{m-1-i,i}(B_{n+3-i}) & \cdots & \mathcal{T}_{0,m-1}(B_n) \\
\end{pmatrix}$ 
\end{tabular}
}
\end{center}

\medskip

\noindent The first $n$ rows involve the $A_i$'s and the last $m$ rows involve the $B_j$'s.
\medskip

From the preceding $(m+n)\times(m+n)$ matrix $M$, we can define the left $(\sigma,\delta)$-resultant.

\begin{definition}\label{leftresultant}
 Let $f,g\in\mathcal{R}$ be skew polynomials of non-negative degrees $m$ and $n$, respectively, with $\sigma$ an automorphism. The above matrix $M$ will be called the \textit{left $(\sigma,\delta)$-Sylvester matrix} of $f$ and $g$, we will denote by $\text{Sylv}_{\mathbb{F},L}^{\sigma,\delta}(f,g)$. Finally, we define the \textit{left $(\sigma,\delta)$-\textit{resultant}} of $f$ and $g$ (over $\mathbb{F}$), denoted by $R^{\sigma,\delta}_{\mathbb{F},L}(f,g)$, 
 as the Dieudonn\'e determinant of $M$.
\end{definition}

\begin{remark}
If $\delta=0$, then formula \eqref{For3.3} can be written as $\mathcal{A}_i=\sigma^{-i}(a_i)$ for all $i=1,...,m$, and $\mathcal{A}_0=a_0$.
Hence $R^{\sigma,0}_{\mathbb{F},L}(f,g)=\text{Ddet}(M)$ with $$M= \begin{pmatrix}
a_{0} & \sigma^{-1}(a_{1}) & \sigma^{-2}(a_2) & \cdots  & \sigma^{-m}(a_m) & 0 & \cdots & 0\\
0 & \sigma^{-1}(a_{0}) & \sigma^{-2}(a_{1})  & \cdots & \sigma^{-m}(a_{m-1}) & \sigma^{-(m+1)}(a_m) & \cdots & 0\\
\vdots & \vdots & \ddots  & \vdots & \vdots & \vdots & \ddots & \vdots &\\
0 & 0 & 0 & \cdots & \sigma^{-(n-1)}(a_0) & \sigma^{-n}(a_{1}) & \cdots & \sigma^{-(n-1+m)}(a_m) \\
b_{0} & \sigma^{-1}(b_{1}) & \sigma^{-2}(b_{2}) & \cdots  & \sigma^{-n}(b_n) & 0 & \cdots & 0\\
0 & \sigma^{-1}(b_{0}) & \sigma^{-2}(b_{1})  & \cdots & \sigma^{-n}(b_{n-1}) & \sigma^{-(n+1)}(b_n) & \cdots & 0\\
\vdots & \vdots & \ddots  & \vdots & \vdots & \vdots & \ddots & \vdots &\\
0 & 0 & 0 & \cdots & \sigma^{-(m-1)}(b_0) & \sigma^{-m}(b_{1}) & \cdots & \sigma^{-(m-1+n)}(b_n) 
\end{pmatrix}$$
\end{remark} 
\smallskip

To obtain an algorithm that allows us to compute the left $(\sigma,\delta)$-Sylvester matrix (see Definition \ref{leftresultant}), we will need first the Algorithm \ref{Alg7} below.
 
\newpage 

\begin{algorithm}[h!]
 \small
\caption{Computation of $\mathcal{A}_i$}
\label{Alg7}
\begin{algorithmic}[1]
\Require{$f(x)=\sum_{i=0}^m a_ix^i$ and $i\in\{0,\dots,m\}$}
\Ensure{ $\mathcal{A}_i$}
\State{$A_i\gets 0$}
\For{$j\gets 0$ to $m+1-i$}
\State{$A_i\gets A_i+(-1)^j \cdot \mathcal{T}_{j,i}(a_{j+i-1})$}
\EndFor \\
\Return{$\mathcal{A}_i$}
\end{algorithmic}
\end{algorithm}

By Algorithms \ref{Alg2bis} and \ref{Alg7}, we can produce now the following algorithm which allows us to compute $\text{Sylv}_{\mathbb{F},L}^{\sigma,\delta}(f,g)$.

\begin{algorithm}[h!] 
\small
\caption{Computation of the left $(\sigma,\delta)$-Sylvester matrix of $f(x)=a_0+a_1x+\cdots+a_mx^m$ and $g(x)=b_0+b_1x+\cdots+b_nx^n$.}
\label{Algorithm:LefResultante}
\begin{algorithmic}[1]
\Require{$f,g\in\mathcal{R}$.}
\Ensure{Left $(\sigma,\delta)$-Sylvester matrix $M$ of $f$ and $g$.}
\State{$M_1\gets \begin{pmatrix}  \mathcal{A}_{0}  &   \mathcal{A}_{1} &\mathcal{A}_2 & \cdots & \mathcal{A}_{n+m} \end{pmatrix}$ }
\State{$M_2\gets \begin{pmatrix}  \mathcal{B}_{0}  &   \mathcal{B}_{1}&\mathcal{B}_2 & \cdots & \mathcal{B}_{n+m} \end{pmatrix}$ }
\For{$p\gets 1$ to $n-1$}
\State{$M_3\gets \begin{pmatrix}  (-1)^p\cdot\mathcal{T}_{p,0}(\mathcal{A}_{0}) \end{pmatrix}$}
\For{$q\gets 1$ to $n+m-1$}
\State{$Z_1\gets 0$}
\For{$l\gets 0$ to $p$}
\If{$0\leq q-l\leq m$}
\State{$Z_1 \gets Z_1 + \mathcal{T}_{p-l,l}(\mathcal{A}_{q-l})$}
\EndIf
%
%
%
%
\EndFor
\State{$M_3\gets \left( \begin{array}{c|c} M_3 &Z_1 \end{array}\right)$}
\EndFor
\State{$M_1\gets \left( \begin{array}{c} M_1 \\ \hline   M_3 \end{array}\right)$}
\EndFor 
 \For{$p\gets 1$ to $m-1$}
\State{$M_4\gets \begin{pmatrix}  \mathcal{T}_{p,0}(\mathcal{B}_{0}) \end{pmatrix}$}
\For{$q\gets 1$ to $n+m-1$}
\State{$Z_2\gets 0$}
\For{$l\gets 0$ to $p$}
\If{$0\leq q-l\leq n$}
\State{$Z_2 \gets Z_2 + \mathcal{T}_{p-l,l}(\mathcal{B}_{q-l})$}
\EndIf
\EndFor
\State{$M_4\gets \left( \begin{array}{c|c} M_4 &Z_2  \end{array}\right)$}
\EndFor
\State{$M_2\gets \left( \begin{array}{c} M_2 \\ \hline   M_4 \end{array}\right)$}
\EndFor

\algstore{myalg}
\end{algorithmic}
\end{algorithm}

\begin{algorithm}[h!]                     
\begin{algorithmic} [1]                  
\algrestore{myalg}

\State{$M\gets \left( \begin{array}{c} M_1 \\ \hline   M_2 \end{array}\right)$}\\
\Return{$M$}
\end{algorithmic}
\end{algorithm}

\medskip

As an application of the above algorithms, we can calculate in Magma the left
$(\sigma,\delta)$-Sylvester matrix of $f=x^2+wx$ and $g=x^2+w^2x+1$ in $\mathbb{F}_4[x;\sigma,\delta]$, where $\mathbb{F}_4=\{0,1,w,w^2\}, \sigma(a)=a^2, \delta(a)=w(\sigma(a)+a)$ for every $a\in\mathbb{F}_4$. Note that in this situation $\sigma^{-1}=\sigma$. 

First, write the following instructions in Magma:

\begin{verbatim}
F<w>:=GF(4);
S:= map< F -> F | x :-> x^2 >; 
D:= map< F -> F | x :-> w*(S(x)+x) >;
\end{verbatim}

Then, by typing the next Magma program

\begin{prog}\label{prog 9} ~
\begin{verbatim}
PosComT:=function(i,j,a)
C:= [u: u in [VectorSpace(GF(2),i+j)!v : v in 
VectorSpace(GF(2),i+j)]| Weight(u) eq i];
A:=0;
for k in [1..#C] do
 b:=a;
  for l in [1..i+j] do
   if C[k][l] eq 1 then
    b:=D(S(b));
    else
    b:=S(b); 
   end if;
  end for;
 A:=A+b;
end for;
return A;
end function;

Ai:=function(f,i)
A:=0;
for j in [0..#f-i] do
 A:=A+(-1)^(j)*PosComT(j,i-1,f[j+i]);
end for;
return A;
end function;

SumPosComT:=function(f,i,j)
AA:=0;
for k in [0..i-1] do
 if j-k ge 1 and j-k le #f then
  if i-1 ne 0 then
   AA:=AA+(-1)^(i-1+k)*PosComT(i-1-k,k,Ai(f,j-k));
   else
   AA:=(-1)^(i-1+k)*Ai(f,j-k);
  end if;
 end if;
end for;
return AA;
end function;

LeftSylvesterMatrix:=function(f,g)
m:=#f-1;
n:=#g-1;
if n ne 0 then
 M1:= Matrix(F,1,n+m,[SumPosComT(f,s,t): s in {1}, t in {1..n+m}]);
  for p in [2..n] do
   X:=Matrix(F,1,n+m,[SumPosComT(f,s,t): s in {p}, t in {1..n+m}]);
   M1:=VerticalJoin(M1,X);
  end for;
 else
 M1:=RemoveRow(ZeroMatrix(F,1,n+m),1);
end if;
if m ne 0 then
 M2:= Matrix(F,1,n+m,[SumPosComT(g,s,t): s in {1}, t in {1..n+m}]);
  for p in [2..m] do
   X:=Matrix(F,1,n+m,[SumPosComT(g,s,t): s in {p}, t in {1..n+m}]);
   M2:=VerticalJoin(M2,X);
  end for;
 else
 M2:=RemoveRow(ZeroMatrix(F,1,n+m),1);
end if;
M:=VerticalJoin(M1,M2);
return M;
end function;
\end{verbatim}
\end{prog}

\smallskip

and writing the following instruction

\begin{verbatim}
LeftSylvesterMatrix([0,w,1],[1,w^2,1]);
\end{verbatim}

we obtain
\begin{equation}\label{S}
S:=Sylv^{\sigma, \delta}_{\mathbb{F}_4,L}(f,g)=
\begin{pmatrix}
w & w^2 & 1 & 0 \\
w & 1 & w  & 1\\
w^2 & w & 1  & 0 \\
w & 0& w^2 & 1 
\end{pmatrix}\ .
\end{equation}

\medskip

\noindent By using $R^{\sigma,\delta}_{\mathbb{F},L}(f,g)$, we can give a left-hand version of Theorem \ref{Res1.5} as follows.

\begin{theorem}\label{teorizq}
Let $\sigma$ be an automorphism of $\mathbb{F}$ and let $f,g\in \mathcal{R}$ be two skew polynomials of positive degree $m$ and $n$, respectively. Then the following conditions are equivalent:
\begin{itemize}
    \item[$1)$]$\text{R}^{\sigma,\delta}_{\mathbb{F},L}(f,g)=0$;
    \item[$2)$] $f$ and $g$ have a common (non-unit) left factor in $\mathcal{R}$;
    \item[$3)$] $gcld(f,g)\neq 1$ (where "gcld" means greatest  common left  divisor);
    \item[$4)$] there are no polynomials $p,q \in \mathcal{R}$ such that $fp + gq = 1$;
    \item[$5)$] $f\mathcal{R}+g\mathcal{R}\subsetneq \mathcal{R}$.
\end{itemize}
\end{theorem}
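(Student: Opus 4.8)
The plan is to mirror the proof of Theorem \ref{Res1.5} line by line, replacing everywhere the LPID structure of $\mathcal{R}$ by its RPID structure, which is available precisely because $\sigma$ is assumed to be an automorphism (so $\mathcal{R}$ is a left Euclidean domain, hence a RPID, see \cite[Theorem 6]{Ore}). First I would establish $1)\Leftrightarrow 2)$. By Definition \ref{leftresultant}, $\text{R}^{\sigma,\delta}_{\mathbb{F},L}(f,g)=\text{Ddet}(M)$, and a square matrix over a division ring has zero Dieudonn\'e determinant if and only if it is singular, i.e.\ if and only if the homogeneous linear system $M\cdot(C_0,\dots,C_{n-1},D_0,\dots,D_{m-1})^{T}=(0,\dots,0)$ of \eqref{For9} admits a nontrivial solution in $\mathbb{F}^{m+n}$. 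Tracing back the construction of $M$ through \eqref{For3.3} and \eqref{For1.3bis}, such a nontrivial solution corresponds exactly to a pair of skew polynomials $c=x^{n-1}C_{n-1}+\dots+C_0$ and $d=x^{m-1}D_{m-1}+\dots+D_0$, not both zero, with $\deg c<n$, $\deg d<m$, satisfying $fc+gd=0$. By Lemma \ref{Res3.2}, the existence of such $c,d$ is equivalent to $f$ and $g$ having a common (non-unit) left factor in $\mathcal{R}$.

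Next, $2)\Leftrightarrow 3)$ is immediate from the definition of $\text{gcld}$: a common non-unit left divisor of $f$ and $g$ exists exactly when $\text{gcld}(f,g)\neq 1$. For $3)\Rightarrow 4)$, writing $f=rq_1$ and $g=rq_2$ with $r:=\text{gcld}(f,g)$ of positive degree, any combination $fp+gq=r(q_1p+q_2q)$ has degree at least $\deg r\geq 1$ by the additivity of degrees in $\mathcal{R}$, hence cannot equal $1$. For $4)\Rightarrow 3)$, since $\sigma$ is an automorphism $\mathcal{R}$ is a RPID, so $f\mathcal{R}+g\mathcal{R}=h\mathcal{R}$ for some $h\in\mathcal{R}$; if $h$ were a unit then $1\in h\mathcal{R}=f\mathcal{R}+g\mathcal{R}$, i.e.\ $fp+gq=1$ for suitable $p,q$, contradicting $4)$, so $h$ is a non-unit common left factor of $f$ and $g$. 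Finally $4)\Leftrightarrow 5)$ follows from the observation that $f\mathcal{R}+g\mathcal{R}=\mathcal{R}$ if and only if $1\in f\mathcal{R}+g\mathcal{R}$.

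The step I expect to be the main obstacle is $1)\Leftrightarrow 2)$: one must verify carefully that the passage from the matrix identity \eqref{For9} back to the polynomial identity $fc+gd=0$ respects the right-coefficient bookkeeping of Lemma \ref{Res3.1}, that the degree bounds $\deg c<n$ and $\deg d<m$ are exactly the ones encoded by the block sizes of $M$, and that ``$\text{Ddet}=0$'' really is equivalent to the existence of a nontrivial right null vector --- the latter being a standard but nontrivial property of the Dieudonn\'e determinant (see \cite{draxl} and \cite{Br}). Everything else is a routine transcription of the right-handed argument of Theorem \ref{Res1.5}, using combinations of the form $fp+gq$ in place of $pf+qg$ and right ideals in place of left ideals.
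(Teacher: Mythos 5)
Your proposal is correct and follows essentially the same route the paper intends: the paper's own proof of Theorem \ref{teorizq} consists only of the remark ``Similar to Theorem \ref{Res1.5}'', and your argument is precisely that transcription, using Lemma \ref{Res3.2} and the system \eqref{For9} in place of Lemma \ref{Res1.2} and \eqref{For3.1}, and the RPID property of $\mathcal{R}$ (available since $\sigma$ is an automorphism) in place of the LPID property. Your identification of $1)\Leftrightarrow 2)$ as the step needing care --- in particular that vanishing of the Dieudonn\'e determinant is equivalent to the existence of a nontrivial right null vector, and that the right-coefficient bookkeeping of Lemma \ref{Res3.1} makes the block sizes of $M$ encode the bounds $\deg c<n$, $\deg d<m$ --- is exactly the content the paper leaves implicit.
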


\begin{proof}
Similar to Theorem \ref{Res1.5}.
\end{proof}

\begin{example}
Consider $\mathbb{F}_4[x;\sigma,\delta]$ with $\mathbb{F}_4=\{ 0,1,w, w^2\}$, where $w^2+w+1=0$, $\sigma(a)=a^2$ and $\delta(a)=w(\sigma(a)+a)$ for all $a \in \mathbb{F}_4$. In Example \ref{Res1.7} we have seen that given 
   $f:=(x+1)(x+w)=x^2+w x$  and $g:=(x+1)(x+w^2)=x^2+w^2 x+1$ 
we have $R^{\sigma,\delta}_{\mathbb{F}_4}(f,g)=w^2 \neq 0$, but 
$R^{\sigma,\delta}_{\mathbb{F}_4,L}(f,g)=\det S=0$ with $S$ as in \eqref{S},
according to Theorem \ref{teorizq}.
\end{example}

\noindent Let us continue here by giving left-hand versions of some previous results, whose proofs we omit because are similar to those of Theorem \ref{teor 3.7} and Proposition \ref{pro3.19}, respectively.

\begin{theorem}\label{}
Let $\sigma$ an automorphism of $\mathbb{F}$ and $\mathcal{P}_k(\mathbb{F})$ be the set of the
polynomials in $\mathcal{R}$ of degree less than or equal to $k$ with coefficients in $\mathbb{F}$. Let $f,g\in\mathcal{R}$ be two polynomials of positive degree $m,n$ respectively. Consider the right $\mathbb{F}$-linear map 
$$\varphi : \mathcal{P}_{n-1}(\mathbb{F})\oplus\mathcal{P}_{m-1}(\mathbb{F})\to\mathcal{P}_{n+m-1}(\mathbb{F})$$
defined by $\varphi ((a,b)):=fa+gb$. Then
$$\deg gcld(f,g)=\dim \ker \varphi=\dim \ker \phi = n+m-rr.rk(M)=n+m-lc.rk(M)  \ ,$$
where $\phi: \mathbb{F}^{n+m}\to\mathbb{F}^{n+m}$ is the right $\mathbb{F}$-linear map given by $\phi(\vec{x}):=M\cdot \vec{x}^T$ with $M:=Sylv_{\mathbb{F},L}^{\sigma,\delta}(f,g)$ the matrix defined in \eqref{For9} and $rr.rk(M)$ ($lc.rk(M)$) is the right row (left column) rank of $M$ which means the dimension of the $\mathbb{F}$-subspace spanned by the rows (columns) of $M$ viewed as elements of the $n+m$-dimensional right (left) vector space $\mathcal{P}_{n+m-1}(\mathbb{F})$ over $\mathbb{F}$.
\end{theorem}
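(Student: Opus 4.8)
The plan is to mirror, on the left, the proof of Theorem \ref{teor 3.7}: first reduce $\varphi$ to $\phi$ by a coordinate identification, then reduce $\dim\ker\varphi$ to $\deg\mathrm{gcld}(f,g)$ by analysing the ideal structure of $\mathcal{R}$, and finally feed the matrix $M$ into the rank--nullity theorem. Since $\sigma$ is an automorphism, $\mathcal{R}$ is a RPID and the left division algorithm holds (see \cite[Theorem 6]{Ore}), so the right-ideal analogues of Lemma \ref{Res1.1} are available by the same arguments: for non-constant $p\in\mathcal{R}$ the quotient $\mathcal{R}/p\mathcal{R}$ is a right $\mathbb{F}$-module with $\dim\mathcal{R}/p\mathcal{R}=\deg p$, and if $p\mathcal{R}\cap q\mathcal{R}=h\mathcal{R}$ and $p\mathcal{R}+q\mathcal{R}=k\mathcal{R}$ then $\deg p+\deg q=\deg h+\deg k$. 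By Lemma \ref{Res3.1} every element of $\mathcal{P}_k(\mathbb{F})$ can be written with right-hand coefficients, and $x^k\mathcal{A}_k+\dots+\mathcal{A}_0\mapsto(\mathcal{A}_k,\dots,\mathcal{A}_0)$ is a right $\mathbb{F}$-linear isomorphism $\mathcal{P}_k(\mathbb{F})\cong\mathbb{F}^{k+1}$; under these identifications $\varphi$ becomes precisely the map $\phi(\vec x)=M\cdot\vec x^{T}$ obtained from system \eqref{For9}, whence $\dim\ker\varphi=\dim\ker\phi$.

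Next I would compute $\dim\ker\varphi$. Write $f\mathcal{R}+g\mathcal{R}=h\mathcal{R}$ with $h:=\mathrm{gcld}(f,g)$ and $f\mathcal{R}\cap g\mathcal{R}=\ell\mathcal{R}$ with $\ell:=\mathrm{lclm}(f,g)$ (the intersection is nonzero since $\mathcal{R}$ is an Ore domain). Then there are unique $\alpha,\beta\in\mathcal{R}$ with $\ell=f\alpha=g\beta$, and the degree formula above gives $\deg\ell=m+n-\deg h$, hence $\deg\alpha=n-\deg h$ and $\deg\beta=m-\deg h$. For $(a,b)\in\ker\varphi$ we have $fa=g(-b)\in f\mathcal{R}\cap g\mathcal{R}=\ell\mathcal{R}$, so $fa=\ell t=f\alpha t$ for some $t\in\mathcal{R}$; cancelling $f$ on the left (valid since $\mathcal{R}$ is a domain) yields $a=\alpha t$, and likewise $b=-\beta t$. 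The constraints $\deg a\le n-1$ and $\deg b\le m-1$ then force $\deg t\le\deg h-1$, and conversely every pair $(\alpha t,-\beta t)$ with $\deg t\le\deg h-1$ lies in $\mathcal{P}_{n-1}(\mathbb{F})\oplus\mathcal{P}_{m-1}(\mathbb{F})$ and satisfies $\varphi((\alpha t,-\beta t))=f\alpha t-g\beta t=(\ell-\ell)t=0$. Thus $\ker\varphi=\{(\alpha t,-\beta t):t\in\mathcal{R},\ \deg t\le\deg h-1\}$, and $\{(\alpha x^{j},-\beta x^{j})\}_{j=0}^{\deg h-1}$ is a right $\mathbb{F}$-basis (spanning because $\alpha\sum_j x^{j}c_j=\sum_j\alpha x^{j}c_j$, independence because $\alpha\ne 0$ and $\mathcal{R}$ is a domain), so $\dim\ker\varphi=\deg h=\deg\mathrm{gcld}(f,g)$.

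Finally, applying the rank--nullity theorem to the right $\mathbb{F}$-linear map $\phi$ gives $\dim\ker\phi=(n+m)-\dim\mathrm{Im}\,\phi$, and $\dim\mathrm{Im}\,\phi$ is identified with $rr.rk(M)=lc.rk(M)$ by the fact that over a division ring the row rank equals the column rank in the appropriate handedness (equivalently, the number of nonzero rows of a one-sided echelon form of $M$), exactly as in the closing step of the proof of Theorem \ref{teor 3.7} and in \cite[Theorem 1.3]{rank}. Chaining the three identities yields the stated equalities.

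I expect the only genuine difficulty to be the left/right bookkeeping, which over a non-commutative $\mathbb{F}$ is not cosmetic: one must keep polynomial coefficients consistently on the correct side (invoking Lemma \ref{Res3.1}), work with the right-ideal picture so that $f\mathcal{R}+g\mathcal{R}$ actually computes $\mathrm{gcld}(f,g)$ rather than $\mathrm{gcrd}$, use left cancellation (not right) in the kernel computation, and apply the division-ring rank theorem with the matching one-sidedness. Once all these conventions are fixed, each step is the formal mirror image of the corresponding step in the proof of Theorem \ref{teor 3.7}, so no new idea is needed.
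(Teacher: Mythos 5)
Your proposal is exactly the mirroring that the paper intends (the paper omits this proof and refers the reader to the proof of Theorem \ref{teor 3.7}), and the substantive part is correct: the right-ideal analogues of Lemma \ref{Res1.1}, the identification of $\mathcal{P}_k(\mathbb{F})$ with $\mathbb{F}^{k+1}$ via right-hand coefficients (Lemma \ref{Res3.1}), and the computation $\ker\varphi=\{(\alpha t,-\beta t):\deg t\leq\deg \mathrm{gcld}(f,g)-1\}$ via $\mathrm{lclm}(f,g)=f\alpha=g\beta$ are all sound; you even handle the sign of $\beta$ more carefully than the original right-hand argument does. So the first two equalities, $\deg \mathrm{gcld}(f,g)=\dim\ker\varphi=\dim\ker\phi$, are fully established.

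The last step, however, is not. You identify $\dim\mathrm{Im}\,\phi$ with $rr.rk(M)=lc.rk(M)$ ``in the appropriate handedness'', but the handedness chosen there is in fact the inappropriate one. The image of $\vec{x}\mapsto M\vec{x}^{T}$ is the set of \emph{right} linear combinations $\sum_j(\mathrm{col}_j M)\,x_j$ of the columns of $M$, so rank--nullity gives $\dim\ker\phi=n+m-rc.rk(M)=n+m-lr.rk(M)$ (right column rank equals left row rank over a division ring). It does not give $n+m-rr.rk(M)$: over a noncommutative $\mathbb{F}$ the right row rank (which equals the left column rank) can genuinely differ from the left row rank (which equals the right column rank) --- for instance $\bigl(\begin{smallmatrix}1&i\\ j&k\end{smallmatrix}\bigr)$ over $\mathbb{H}$ has left row rank $2$ but right row rank $1$, since $(j,k)=(1,i)\cdot j$ while $(j,k)\neq \lambda\cdot(1,i)$ for any $\lambda$. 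The correct mirror of the pair $lr.rk(A)=rc.rk(A)$ from Theorem \ref{teor 3.7}, under passing from the row system $\vec{x}A=0$ to the column system $M\vec{x}^{T}=0$, is $rc.rk(M)=lr.rk(M)$, not $rr.rk(M)=lc.rk(M)$; the statement as printed appears to carry this transposition slip, and your proof reproduces it rather than catching it. To close the gap you must either read the rank terms as $rc.rk(M)=lr.rk(M)$ or supply a separate argument that all four ranks of this particular Sylvester-type matrix coincide.
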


\begin{proposition}\label{pro3.38}
Let $f,g\in\mathcal{R}$ be two skew polynomials of positive degree. Then, there are $A,B\in\mathcal{R}$ such that
$$fA+gB=R_{\mathbb{F},L}^{\sigma,\delta}(f,g) \ ,$$
where the coefficients of $A$ and $B$ $(\text{mod}\; [\mathbb{F}^*,\mathbb{F}^*])$ are integer polynomials in the entries of $\text{Sylv}_{\mathbb{F},L}^{\sigma,\delta}(f,g)$.
\end{proposition}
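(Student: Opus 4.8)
The plan is to follow the proof of Proposition \ref{pro3.19} almost verbatim, but on the left-hand side: where that argument represents skew polynomials by left-hand coefficients, uses the matrix $\text{Sylv}_{\mathbb{F}}^{\sigma,\delta}(f,g)$, and invokes the row form of Cramer's rule (Remark \ref{Cramer bis}), here one uses the right-hand coefficient representation of Lemma \ref{Res3.1}, the matrix $M:=\text{Sylv}_{\mathbb{F},L}^{\sigma,\delta}(f,g)$ of \eqref{For9}, and the column form of Cramer's rule (Lemma \ref{Cramer}).

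First I would dispose of the trivial case: if $R_{\mathbb{F},L}^{\sigma,\delta}(f,g)=0$, take $A=B=0$. So assume $r:=R_{\mathbb{F},L}^{\sigma,\delta}(f,g)\neq 0$, so that $M=\text{Sylv}_{\mathbb{F},L}^{\sigma,\delta}(f,g)$ is non-singular. Representing $f$ and $g$ by right-hand coefficients via Lemma \ref{Res3.1} and looking for $A'=\sum_{j=0}^{n-1}x^{j}C_{j}$, $B'=\sum_{j=0}^{m-1}x^{j}D_{j}$ with $C_{j},D_{j}\in\mathbb{F}$ unknown, the identity $fA'+gB'=1$, once both products are expanded by \eqref{For1.3bis} exactly as in the derivation of \eqref{For9}, becomes the square linear system
\[ M\cdot (C_{0},\dots,C_{n-1},D_{0},\dots,D_{m-1})^{T}=\vec{e}\ , \]
which is \eqref{For9} with the zero vector replaced by the coordinate vector $\vec{e}$ of the constant polynomial $1$. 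Conversely, since $\deg A'<n$ and $\deg B'<m$ force $\deg(fA'+gB')\le m+n-1$, any solution of this system produces polynomials $A',B'$ with $fA'+gB'=1$; and such a solution exists and is unique because $M$ is non-singular.

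Then I would apply Lemma \ref{Cramer} to this system, obtaining each $C_{i}$ and each $D_{j}$ modulo $[\mathbb{F}^{*},\mathbb{F}^{*}]$ as $r^{-1}$ times the Dieudonné determinant of the matrix $M_{h}$ obtained from $M$ by replacing one column by $\vec{e}$. Finally I would set $A:=A'\,r$ and $B:=B'\,r$ — the multiplication by $r$ being on the \emph{right} here, since in this setting the coefficients of the polynomials sit on the right — so that $fA+gB=(fA'+gB')\,r=r=R_{\mathbb{F},L}^{\sigma,\delta}(f,g)$; and the coefficients of $A$ and $B$ modulo $[\mathbb{F}^{*},\mathbb{F}^{*}]$ become, exactly as in the proof of Proposition \ref{pro3.19}, expressions of the form $\text{Ddet}\,M_{h}$, which are integer polynomials in the entries of $\text{Sylv}_{\mathbb{F},L}^{\sigma,\delta}(f,g)$.

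The step I expect to require the most care is the second one: the combinatorial bookkeeping in passing from the polynomial identity $fA'+gB'=1$ to the displayed matrix equation, where one must track precisely how the row and column labels of the system coming from \eqref{For1.3bis} match those fixed in \eqref{For9}, where the entry $1$ of $\vec{e}$ lands, and the signs $(-1)^{k}$ produced by \eqref{For1.3bis}. By contrast, the conceptual ingredients — non-singularity of the left Sylvester matrix and the applicability of Cramer's rule — are immediate from $R_{\mathbb{F},L}^{\sigma,\delta}(f,g)\neq 0$, and the only genuinely non-commutative subtlety is that one must multiply through by the resultant on the right rather than the left in order to preserve the identity $fA'+gB'=1$.
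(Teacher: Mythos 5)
Your proof is correct and is essentially the argument the paper intends: the proof of Proposition \ref{pro3.38} is omitted in the paper with the remark that it is analogous to that of Proposition \ref{pro3.19}, and your left-handed mirror of that argument --- the right-coefficient representation of Lemma \ref{Res3.1}, the inhomogeneous version of the system \eqref{For9} with right-hand side $\vec{e}$, the column form of Cramer's rule (Lemma \ref{Cramer}), and the final multiplication by the resultant --- is exactly that analogue. You also correctly identify the one genuinely noncommutative point, namely that one must multiply by $R_{\mathbb{F},L}^{\sigma,\delta}(f,g)$ on the \emph{right}, after which the coefficients $C_j\,r$ and $D_j\,r$ still reduce to $\text{Ddet}\,M_h$ in the abelian quotient $\mathbb{F}^*/[\mathbb{F}^*,\mathbb{F}^*]$.
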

 
Moreover, we can reformulate Proposition \ref{basicpropert}, Theorem \ref{Res1.15} and Corollary \ref{Res1.16} as follows.
\begin{proposition} 
Let $\sigma$ be an automorphism of $\mathbb{F}$ and let $f,g\in \mathcal{R}$ be two skew polynomials of non-negative degree $m$ and $n$, respectively. The following properties hold:
\begin{itemize}
    \item[$1)$] $R_{\mathbb{F},L}^{\sigma,\delta}(g,f)=(-1)^{mn}R_{\mathbb{F},L}^{\sigma,\delta}(f,g)$. 
    \item[$2)$] $R_{\mathbb{F},L}^{\sigma,\delta}(-f,g)=(-1)^{n}R_{\mathbb{F},L}^{\sigma,\delta}(f,g)$ and $R_{\mathbb{F},L}^{\sigma,\delta}(f,-g)=(-1)^{m}R_{\mathbb{F},L}^{\sigma,\delta}(f,g)$.
    \item[$3)$] If $g=x-a$, then $R_{\mathbb{F},L}^{\sigma,\delta}(f,g)=0$ if and only if $f_L(a)=0$. In particular, if $a=0$ we have $R_{\mathbb{F},L}^{\sigma,\delta}(f,g)=f_L(0)\;(\text{mod}\;[\mathbb{F}^*,\mathbb{F}^*])$
    \item[$4)$] If $g=b_0$, then $R_{\mathbb{F},L}^{\sigma,\delta}(f,g)=b_0\sigma^{-1}(b_0)\sigma^{-2}(b_0)\cdots \sigma^{-(m-1)}(b_0)\;(\text{mod}\; [\mathbb{F}^*,\mathbb{F}^*])$.
    \end{itemize}
\end{proposition}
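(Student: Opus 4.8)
The plan is to run, item by item, the same arguments used in the proof of Proposition~\ref{basicpropert}, with the right $(\sigma,\delta)$-Sylvester matrix replaced by the left one from Definition~\ref{leftresultant} and the right evaluation replaced by the left evaluation of Lemma~\ref{evizquierda}; here $\sigma$ is assumed to be an automorphism, so all of the left-hand machinery of $\S$\ref{LRes} (in particular Lemma~\ref{Res3.1} and Theorem~\ref{teorizq}) is available. For $1)$, the matrix $\text{Sylv}_{\mathbb{F},L}^{\sigma,\delta}(g,f)$ is obtained from $\text{Sylv}_{\mathbb{F},L}^{\sigma,\delta}(f,g)$ by moving the block of $n$ rows built from the $\mathcal{A}_i$'s past the block of $m$ rows built from the $\mathcal{B}_j$'s, i.e. by a product of $mn$ row transpositions; since exchanging two rows changes the sign of the Dieudonn\'e determinant, this yields the factor $(-1)^{mn}$. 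For $2)$, each $\mathcal{T}_{t,r}$ is additive (it is a sum of compositions of the additive maps $\delta\sigma^{-1}$ and $\sigma^{-1}$), hence by \eqref{For3.3} the map sending $f$ to each of its right-hand coefficients $\mathcal{A}_i$ is additive; therefore replacing $f$ by $-f$ negates every entry in the first $n$ rows of $\text{Sylv}_{\mathbb{F},L}^{\sigma,\delta}(f,g)$, i.e. left-multiplies each of these $n$ rows by the central element $-1$, and by the basic property of the Dieudonn\'e determinant this multiplies $R_{\mathbb{F},L}^{\sigma,\delta}(f,g)$ by $(-1)^n$; the assertion for $-g$ is the same computation applied to the last $m$ rows.

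For $3)$, I would invoke the left-hand version in Theorem~\ref{teorizq}: $R_{\mathbb{F},L}^{\sigma,\delta}(f,x-a)=0$ if and only if $f$ and $x-a$ have a common non-unit left factor in $\mathcal{R}$; since $x-a$ has degree $1$ its only non-unit left divisors are its associates, so this happens exactly when $x-a$ is a left factor of $f$, which — by the definition of the left evaluation as the remainder of the left division of $f$ by $x-a$ — is equivalent to $f_L(a)=0$. For the special case $a=0$, I would instead read off $\text{Sylv}_{\mathbb{F},L}^{\sigma,\delta}(f,x)$ directly: the right-hand coefficients of $g=x$ are $\mathcal{B}_0=0$, $\mathcal{B}_1=1$, and since $\mathcal{T}_{t,r}(0)=0$, $\mathcal{T}_{0,r}(1)=1$ and $\mathcal{T}_{t,r}(1)=0$ for $t\ge 1$, the $m$ rows coming from $g$ are the standard basis vectors $\overline{e_2},\dots,\overline{e_{m+1}}$; hence the matrix is upper triangular with diagonal $(\mathcal{A}_0,1,\dots,1)$, and since $\mathcal{A}_0=f_L(0)$ by Lemma~\ref{Res3.1} (equivalently Lemma~\ref{evizquierda}), we get $R_{\mathbb{F},L}^{\sigma,\delta}(f,x)=f_L(0)\ (\mathrm{mod}\ [\mathbb{F}^*,\mathbb{F}^*])$. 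For $4)$, when $g=b_0$ (so $n=0$) the matrix $\text{Sylv}_{\mathbb{F},L}^{\sigma,\delta}(f,g)$ is $m\times m$ with $(p,q)$-entry $(-1)^{p+q}\mathcal{T}_{p-q,q}(b_0)$, which vanishes whenever $p<q$; thus it is lower triangular with diagonal entries $\mathcal{T}_{0,q}(b_0)=\sigma^{-q}(b_0)$ for $q=0,\dots,m-1$, and since the Dieudonn\'e determinant of a triangular matrix is the coset of the ordered product of its diagonal entries (\cite[p.~104]{draxl2}) we obtain $R_{\mathbb{F},L}^{\sigma,\delta}(f,g)=b_0\sigma^{-1}(b_0)\cdots\sigma^{-(m-1)}(b_0)\ (\mathrm{mod}\ [\mathbb{F}^*,\mathbb{F}^*])$.

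The only slightly delicate points are the explicit bookkeeping in items $3)$ and $4)$ — identifying which block of rows of the left Sylvester matrix degenerates and to exactly which triangular shape, keeping track of the indices and of the signs $(-1)^{p+i}$ appearing in its entries — together with the care needed to match ``$x-a$ is a left factor of $f$'' with the remainder of the \emph{left} (not right) division of $f$ by $x-a$. Everything else is a routine transcription of the proof of Proposition~\ref{basicpropert}, and no new idea is required.
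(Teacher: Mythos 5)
Your proposal is correct and follows essentially the same route as the paper, which proves items $1)$, $2)$ and $4)$ by transcribing the row-permutation, row-scaling and triangular-matrix arguments of Proposition \ref{basicpropert} to the left Sylvester matrix, and deduces item $3)$ from the equivalence of conditions $1)$ and $2)$ in Theorem \ref{teorizq}. Your explicit bookkeeping (additivity of the $\mathcal{A}_i$ in $f$, the identification $\mathcal{A}_0=f_L(0)$, and the vanishing pattern of $\mathcal{T}_{p-q,q}(b_0)$ for $p<q$) is accurate and simply supplies details the paper leaves implicit.
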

\begin{proof}
The proofs of the statements $1)$, $2)$ and $4)$ are similar to the proof of Proposition \ref{basicpropert}. Finally, statement $3)$ follows easily from equivalence between $1)$ and $2)$ of Theorem \ref{teorizq}.
\end{proof}

\begin{theorem}\label{res 3.41} 
Two non-constant skew polynomials $f,g\in \mathbb{F}_q[x;\sigma,\delta]$ have a common left root in some polynomial ring extension $\tilde{\mathbb{F}}[x;\Tilde{\sigma},\Tilde{\delta}]$ of $\mathbb{F}_q[x;\sigma,\delta]$ if and only if $\text{R}^{\sigma,\delta}_{\mathbb{F}_q,L}(f,g)=0$.
\end{theorem}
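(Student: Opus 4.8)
The plan is to reproduce, on the left, the proof of Theorem \ref{Res1.15}, systematically replacing each right-hand ingredient by its left-hand analogue: Theorem \ref{teorizq} in place of Theorem \ref{Res1.5}, Lemma \ref{evizquierda} (left evaluation through the quantities $M_i^{\sigma,\delta}$) in place of Lemma \ref{F}, and part $2)$ of Theorem \ref{F1} in place of part $1)$. Note first that for the notion of left root to make sense in an extension $\tilde{\mathbb{F}}[x;\tilde\sigma,\tilde\delta]$ one needs $\tilde\sigma$ to be an automorphism of $\tilde{\mathbb{F}}$, so all extensions considered below tacitly satisfy this; since $\mathbb{F}_q$ is finite, $\sigma$ is automatically an automorphism and $R^{\sigma,\delta}_{\mathbb{F}_q,L}(f,g)$ is well defined.

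For the direct implication, a common left root $\alpha$ of $f$ and $g$ in $\tilde{\mathbb{F}}[x;\tilde\sigma,\tilde\delta]$ gives $f=(x-\alpha)f_1$ and $g=(x-\alpha)g_1$, so $(x-\alpha)$ is a common non-unit left factor and Theorem \ref{teorizq} yields $R^{\tilde\sigma,\tilde\delta}_{\tilde{\mathbb{F}},L}(f,g)=0$. Since $\tilde\sigma|_{\mathbb{F}_q}=\sigma$ keeps $\mathbb{F}_q$ invariant, one also has $\tilde\sigma^{-1}|_{\mathbb{F}_q}=\sigma^{-1}$; together with $\tilde\delta|_{\mathbb{F}_q}=\delta$ this shows that the two left $(\sigma,\delta)$-Sylvester matrices, whose entries are the $\mathcal{T}$- and $\mathcal{A}$-expressions built from the coefficients of $f,g$ (all lying in $\mathbb{F}_q$), coincide; hence $R^{\sigma,\delta}_{\mathbb{F}_q,L}(f,g)=R^{\tilde\sigma,\tilde\delta}_{\tilde{\mathbb{F}},L}(f,g)=0$.

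Conversely, assume $R^{\sigma,\delta}_{\mathbb{F}_q,L}(f,g)=0$. By Theorem \ref{teorizq}, $f$ and $g$ have a common non-unit left factor $h\in\mathbb{F}_q[x;\sigma,\delta]$ of positive degree $n$, say $f=hf'$ and $g=hg'$; write $h=\sum_{i=0}^n x^i\mathcal{A}_i$ with $\mathcal{A}_i\in\mathbb{F}_q$ and $\mathcal{A}_n=\sigma^{-n}(a_n)\neq 0$ (Lemma \ref{Res3.1}). Because over $\mathbb{F}_q$ every endomorphism $\sigma$ is a Frobenius power $a\mapsto a^{p^{j}}$ and every $\sigma$-derivation is inner, the recursion of Lemma \ref{evizquierda} becomes a genuine polynomial identity: $M_i^{\sigma,\delta}(y)\in\mathbb{F}_q[y]$, once $\sigma^{-1}$ is read as $b\mapsto b^{p^{m-j}}$; moreover $\deg M_i^{\sigma,\delta}$ is strictly increasing, so $P(y):=\sum_i M_i^{\sigma,\delta}(y)\mathcal{A}_i$ is a nonzero polynomial of positive degree in $\mathbb{F}_q[y]$, and therefore has a root $\tilde a$ in some finite extension $\tilde{\mathbb{F}}$ of $\mathbb{F}_q$ (possibly $\tilde{\mathbb{F}}=\mathbb{F}_q$). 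Once $\sigma,\delta$ are extended to $\tilde{\mathbb{F}}$ so that $\tilde a$ is an actual left root of $h$, Theorem \ref{F1}$(2)$ applied to $f=hf'$ and $g=hg'$ immediately gives $f_L(\tilde a)=g_L(\tilde a)=0$, i.e. a common left root in the polynomial ring extension $\tilde{\mathbb{F}}[x;\tilde\sigma,\tilde\delta]$ of $\mathbb{F}_q[x;\sigma,\delta]$, which closes the argument.

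The main obstacle is precisely this last extension step, which is more delicate than its counterpart in Theorem \ref{Res1.15}: there the quantities $N_i^{\sigma,\delta}$ governing right evaluation involve only $\sigma$ (no inverse) and the inner derivation, whose ``Frobenius-power / inner'' formulas are size-independent and so transport verbatim to any extension (Remark \ref{Res1.14}); here the $M_i^{\sigma,\delta}$ involve $\sigma^{-1}$, whose Frobenius exponent $m-j$ is reduced modulo $m=[\mathbb{F}_q:\mathbb{F}_p]$, whereas over $\tilde{\mathbb{F}}=\mathbb{F}_{p^M}$ a literal re-reading would use exponent $M-j$. I would resolve this by taking $M$ a multiple of $m$ and defining $\tilde\sigma:=\mathrm{Frob}^{\,M-m+j}$ (rather than $\mathrm{Frob}^{\,j}$): this still restricts to $\sigma=\mathrm{Frob}^{\,j}$ on $\mathbb{F}_q$, but now $\tilde\sigma^{-1}=\mathrm{Frob}^{\,m-j}$ on all of $\tilde{\mathbb{F}}$, so $\tilde\sigma^{-1}(b)=b^{p^{m-j}}$ agrees with the formula used to form $P$; with $\tilde\delta(b):=\beta(\tilde\sigma(b)-b)$ one then gets $M_i^{\tilde\sigma,\tilde\delta}(\tilde a)=M_i^{\sigma,\delta}(\tilde a)$, whence $h_L(\tilde a)=P(\tilde a)=0$ in $\tilde{\mathbb{F}}[x;\tilde\sigma,\tilde\delta]$. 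A cleaner alternative that bypasses this bookkeeping is to use the anti-isomorphism $\mathbb{F}_q[x;\sigma,\delta]\to\mathbb{F}_q[x;\sigma^{-1},-\delta\sigma^{-1}]$ fixing $\mathbb{F}_q$ and $x$, which interchanges left and right factors and roots; applying Theorem \ref{Res1.15} to the target ring, whose defining data are again a Frobenius power and an inner $\sigma^{-1}$-derivation, and transporting the resulting common right root back through the suitably extended anti-isomorphism produces the desired common left root.
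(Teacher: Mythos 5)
Your proof is correct and follows essentially the same route as the paper's, whose own argument simply declares the converse ``analogous to the right-to-left implication of Theorem~\ref{Res1.15} by exchanging the functions $N_i^{\sigma,\delta}$ with $M_i^{\sigma,\delta}$ together with slight modifications.'' Your one genuine addition is to identify and resolve the content of those ``slight modifications''---namely that $M_i^{\sigma,\delta}$ involves $\sigma^{-1}$, so the Frobenius exponent of $\tilde{\sigma}$ on $\tilde{\mathbb{F}}=\mathbb{F}_{p^M}$ must be chosen (e.g.\ $M-m+j$ with $m\mid M$) so that $\tilde{\sigma}^{-1}$ acts as $b\mapsto b^{p^{m-j}}$ on all of $\tilde{\mathbb{F}}$ and hence $h_L(\tilde{a})=P(\tilde{a})$---which is a worthwhile elaboration of a point the paper leaves implicit.
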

\begin{proof}
The left-to-right implication follows from Theorem \ref{teorizq} and the fact that $\text{R}^{\Tilde{\sigma},\Tilde{\delta}}_{\tilde{\mathbb{F}},L}(f,g)=\text{R}^{\sigma,\delta}_{\mathbb{F}_q,L}(f,g)$. Conversely, the proof it is analogous to the right-to-left implication of Theorem \ref{Res1.15} by using \cite[Theorem 3.2]{phdtesis} and by exchanging the functions $N^{\sigma,\delta}_i$ with $M^{\sigma,\delta}_i$ of Lemma \ref{evizquierda} together with slight modifications.
\end{proof}

\begin{corollary}
Let $\mathbb{F}$ be a division ring and let $f,g\in\mathcal{R}$ be two non-constant skew polynomials. If $f$ and $g$ have a common left root in some polynomial ring extension $\tilde{\mathbb{F}}[x;\Tilde{\sigma},\Tilde{\delta}]$ of $\mathcal{R}$, then $\text{R}^{\sigma,\delta}_{\mathbb{F},L}(f,g)=0$. 
\end{corollary}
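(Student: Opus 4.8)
The plan is to imitate the proof of Corollary \ref{Res1.16} almost word for word, but working on the left: instead of the right-hand machinery of $\S$\ref{RRes} I would invoke the left-hand counterparts developed in $\S$\ref{LRes}, and in particular the equivalence $1)\Leftrightarrow 2)$ of Theorem \ref{teorizq}. Equivalently, it is exactly the left-to-right implication of Theorem \ref{res 3.41}, stripped of the finiteness hypothesis on $\mathbb{F}$; that hypothesis is only used there to produce the extension in the converse direction and plays no role here.

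First I would record the assumptions that are tacit in the statement. Since $R^{\sigma,\delta}_{\mathbb{F},L}(f,g)$ is defined, $\sigma$ is an automorphism of $\mathbb{F}$ (Definition \ref{leftresultant}); and since the notion of a left root, hence the left evaluation $f_L$, makes sense in $\tilde{\mathbb{F}}[x;\tilde{\sigma},\tilde{\delta}]$, the map $\tilde{\sigma}$ is an automorphism of $\tilde{\mathbb{F}}$. As $\tilde{\sigma}_{|\mathbb{F}}=\sigma$ maps $\mathbb{F}$ onto $\mathbb{F}$, one gets $\tilde{\sigma}^{-1}_{|\mathbb{F}}=\sigma^{-1}$ and $(\tilde{\delta}\tilde{\sigma}^{-1})_{|\mathbb{F}}=\delta\sigma^{-1}$, so the functions $\mathcal{T}^{\tilde{\sigma},\tilde{\delta}}_{d,s}$ and $\mathcal{T}^{\sigma,\delta}_{d,s}$ agree on $\mathbb{F}$; by \eqref{For3.3} and the shape of the left Sylvester matrix this means $\text{Sylv}^{\tilde{\sigma},\tilde{\delta}}_{\tilde{\mathbb{F}},L}(f,g)=\text{Sylv}^{\sigma,\delta}_{\mathbb{F},L}(f,g)$: literally the same matrix, with all entries in $\mathbb{F}$.

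Then the argument is short. Let $\alpha\in\tilde{\mathbb{F}}$ be a common left root of $f$ and $g$. By definition of the left evaluation, $x-\alpha$ is a common non-unit left factor of $f$ and $g$ in $\tilde{\mathbb{F}}[x;\tilde{\sigma},\tilde{\delta}]$, i.e. $f=(x-\alpha)f_1$ and $g=(x-\alpha)g_1$ for suitable $f_1,g_1$. Applying the implication $2)\Rightarrow 1)$ of Theorem \ref{teorizq} over the division ring $\tilde{\mathbb{F}}$ gives $R^{\tilde{\sigma},\tilde{\delta}}_{\tilde{\mathbb{F}},L}(f,g)=0$. Since a Dieudonn\'e determinant vanishes precisely when its matrix is singular, and singularity of a square matrix with entries in $\mathbb{F}$ is the same whether tested over $\mathbb{F}$ or over the larger division ring $\tilde{\mathbb{F}}$, the matrix identification of the previous paragraph yields $R^{\sigma,\delta}_{\mathbb{F},L}(f,g)=R^{\tilde{\sigma},\tilde{\delta}}_{\tilde{\mathbb{F}},L}(f,g)=0$.

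I do not anticipate a real obstacle: the only points needing a word of care are the two bookkeeping facts just used — that the left $(\tilde{\sigma},\tilde{\delta})$- and $(\sigma,\delta)$-Sylvester matrices of $f,g$ coincide, and that ``$\text{Ddet}=0$'' is insensitive to enlarging the coefficient division ring (for the latter, if the matrix were invertible over $\mathbb{F}$ the same inverse would serve over $\tilde{\mathbb{F}}$). These are exactly the steps left implicit in Corollary \ref{Res1.16} and in the first half of the proof of Theorem \ref{res 3.41}, and everything else is a verbatim transcription of that half.
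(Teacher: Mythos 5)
Your proof is correct and follows exactly the route the paper intends: the paper gives no explicit proof of this corollary, but the left-to-right half of Theorem \ref{res 3.41} (and the proof of Corollary \ref{Res1.16}, of which this is the left-hand mirror via Theorem \ref{teorizq}) is precisely your argument. The two bookkeeping facts you make explicit --- that $\text{Sylv}^{\tilde{\sigma},\tilde{\delta}}_{\tilde{\mathbb{F}},L}(f,g)=\text{Sylv}^{\sigma,\delta}_{\mathbb{F},L}(f,g)$ because $\tilde{\sigma}^{-1}_{|\mathbb{F}}=\sigma^{-1}$, and that vanishing of the Dieudonn\'e determinant of a matrix with entries in $\mathbb{F}$ is unaffected by enlarging the coefficient division ring --- are exactly what the paper leaves implicit in the identity $\text{R}^{\tilde{\sigma},\tilde{\delta}}_{\tilde{\mathbb{F}},L}(f,g)=\text{R}^{\sigma,\delta}_{\mathbb{F},L}(f,g)$.
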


\smallskip

\subsection{Right and left multiple roots}
In this subsection, under the assumption that $\sigma$ is an automorphism of $\mathbb{F}$, we will use the left and right $(\sigma,\delta)$-resultants to analyse the existence of right and left multiple roots of a skew polynomial $f\in\mathcal{R}$, respectively. 

First of all, let us give here some notions of derivatives of polynomials in $\mathcal{R}$ (see also \cite{DerEric} and \cite{zerosmartinez}).

\begin{definition}\label{derivadas}
Let $f\in\mathcal{R}$ with $\sigma$ an endomorphism (automorphism) of $\mathbb{F}$ and $a\in \mathbb{F}$. We define the \textit{first right (left) $(\sigma,\delta)$-derivative} of $f$ at $a$, with respect to the variable $x$, as the right (left) evaluation of $\Delta^{1}_{a}f(x)\in\mathcal{R}$ ($\Delta^{1}_{a,L}f(x)\in\mathcal{R}$) at the point $a$, where $\Delta^{1}_{a}f(x)$ ($\Delta^{1}_{a,L}f(x)$) is obtained by writing $f(x)=\Delta^{1}_{a}f(x)\cdot (x-a)+f(a)\ (f(x)=(x-a)\cdot \Delta^{1}_{a,L}f(x) +f_L(a))\ $.
The skew polynomial $\Delta^{1}_{a}f(x)$ ($\Delta^{1}_{a,L}f(x)$) will be called the \textit{first right (left) $(\sigma,\delta)$-derivative polynomial of $f$ by $a$}.
\end{definition}

By a recursive argument, from Definition \ref{derivadas} one can construct the right (left) $(\sigma,\delta)$-derivative polynomials of higher order of any polynomial in $\mathcal{R}$.

\begin{definition}\label{Hasse.martinez}
Let $\sigma$ be an endomorphism (automorphism) of $\mathbb{F}$. Given $f\in\mathcal{R}$, $r\in \mathbb{Z}_{>0}$ and a sequence ${\bf a}=(a_1,a_2,...,a_r)\in\mathbb{F}^r$, we define the  \textit{right (left) $(\sigma,\delta)$-derivative polynomial of $f$ of order $r$ via ${\bf a}$}, denoted by $\Delta_{{\bf a}}f(x)$ ($\Delta_{{\bf a},L}f(x)$) as the quotient upon right (left) division of $f$ by $P_{{\bf a}}:=(x-a_r)\cdots (x-a_2)(x-a_1)$ ($P_{{\bf a},L}:=(x-a_1) (x-a_2)\cdots (x-a_r)$). In particular, when $a=a_1=\dots =a_r$, we will simply write $\Delta_a^rf(x)$ ($\Delta_{a,L}^rf(x)$).
\end{definition}

\begin{remark}\label{remark4.3.9}
Let $\sigma$ be an endomorphism (automorphism) of $\mathbb{F}$ and consider
${\bf a}=(a_1,\dots,a_r)\in\mathbb{F}^r$. As in \cite{zerosmartinez}, one can define \textit{the right (left) Hasse derivative} $D_{{\bf a}}(f)\in\mathbb{F}$ ($D_{{\bf a},L}(f)\in\mathbb{F}$) \textit{of order} $r$ as the coefficient of the monomial of degree $r-1$ of the remainder in the right (left) division of $f$ by $P_{{\bf a}}$ ($P_{{\bf a},L}$) (see \cite[Definition 31 and Lemma 52]{zerosmartinez}). In this case, we have $D_{{\bf a}}(f)=\Delta_{{\bf a}'}f(a_r)$ ($D_{{\bf a},L}(f)=\left(\Delta_{{\bf a}',L}f\right)_L(a_r)$), where ${\bf a}'=(a_1,\dots,a_{r-1})$.
\end{remark}

Let $f(x)=\sum_{i=0}^m\alpha_i x^{i}$ and $\Delta_a^{1}f(x)=\sum_{j=0}^{m-1}\beta_j x^{j}$ be skew polynomials in $\mathcal{R}$ as in  Definition \ref{derivadas} for some $a\in\mathbb{F}$. By Lemma \ref{LemaCij}, we have
$$f(x)-f(a)=\Delta^{1}_{a}f(x)\cdot (x-a) \Longleftrightarrow \sum_{i=0}^m\alpha_i x^{i}-f(a)= \sum_{i=0}^{m-1} \left(\beta_ix^{i+1}-\sum_{k=0}^{i}\beta_i\;  \mathcal{C}_{k,i-k}(a)x^{i-k}\right). $$ 
Then, comparing the coefficients of the positive powers $x^t$ in the latest equality, we get the following recursive formula:
\begin{equation}\label{(5)}
     \beta_{m-1} = \alpha_m \ ,\ \ \
 \beta_k =\alpha_{k+1}+\sum_{i=0}^{m-k-2} \beta_{k+1+i}\;\mathcal{C}_{i,k+1}(a) \qquad \forall k=m-2,m-3,...,0.
\end{equation}

\noindent Using \eqref{(5)}, the next algorithm shows how to compute $\Delta_{{\bf a}}f(x),\ {\bf a}=(a_1,\dots,a_n)\in \mathbb{F}^{n}$.

\begin{algorithm}[h!]
 \small
\caption{Computation of $\Delta_{{\bf a}}f(x)\in\mathcal{R}$.}
\label{Algorithm:Dx}
\begin{algorithmic}[1]
\Require{$f(x)=\sum_{i=0}^m \alpha_ix^i\in\mathcal{R}$, ${\bf a}=(a_1,\dots,a_n)\in\mathbb{F}^n$, $n\in \mathbb{Z}_{\geq 1}$ and $n\leq m$}
\Ensure{ $\Delta_{{\bf a}}f(x)$}
\For{$i\gets 1$ to $n$}
\State{$m\gets \deg f(x)$}
%
%
%
%
%
\State{$\beta_{m-1}\gets \alpha_{m}$}
\For{$h\gets 0$ to $m-2$}
\State{$s\gets 0$}
\For{$j\gets 0$ to $h$}
\State{$s_1\gets \beta_{m-1-h+j}\cdot \mathcal{C}_{j,m-1-h}(a_i)$}
\State{$s\gets s+s_1$}
\EndFor
\State{$\beta_{m-2-h}\gets \alpha_{m-1-h}+s$}
\EndFor
\State{$q(x)\gets \sum_{t=0}^{m-1} \beta_tx^t$}
\State{$f(x)\gets q(x)$}
\EndFor\\
\Return{$q(x)$}
\end{algorithmic}
\end{algorithm}

Note that, when $\sigma$ is an automorphism, similar accounts as above can be made with $f(x)-f_L(a)=(x-a)\cdot \Delta_{a,L}^1f(x)$. Moreover, as an application of Algorithm \ref{Algorithm:Dx}, we apply the next Magma program to compute $\Delta_{(a,a)}f$ when $f=x^4-jx^2+(2i-k) \in \mathbb{H}[x;\sigma,0]$, $a=1+j$ and $\sigma(h):=ihi^{-1}$ for all $h\in \mathbb{H}$.
To do this, begin by writing in Magma the following instructions:
\medskip
\begin{verbatim}
F<i,j,k> := QuaternionAlgebra< RealField() | -1, -1 >;
R<x>:=PolynomialRing(F);
S:= map< F -> F | x :-> i*x*(1/i) >;
D:= map< F -> F | x :-> 0 >;
\end{verbatim}
\medskip
then, using the function ``PosCom" defined in Program \ref{prog 1}, we can continue with the following instructions to define a new function ``DerNA".

\newpage

\begin{prog} \label{program:Dx} ~
\begin{verbatim}
DerNA:=function(f,A)
 t:=#f;
 if #A ge t then
  f:=F!0;
 end if;  
 if #A le t-1 then
  if t eq 2 then
   f:=F!f[t];
  end if;
  if t ge 3 then
   for i in [1..#A] do
    t:=#f;
    b:=[ f[t] ];
    for h in [0..t-3] do
     s:=F!0;
     for j in [0..h] do
      s1:=b[h+1-j]*PosCom(j,t-2-h,A[i]);
      s:= s + s1;
     end for;
     b:= b cat [ f[t-1-h] + s ];
    end for;
    g:=[];
    for k in [1..#b] do
     g:=g cat [ b[#b+1-k] ];
    end for;
    f:=g;
   end for;
  end if;
 end if;
 return R!f;
end function;
\end{verbatim}
\end{prog}

Thus, typing in Magma 

\begin{verbatim}
DerNA([2*i-k,0,-j,0,1],[1+j,1+j]);
\end{verbatim}

\noindent we get  $\Delta_{(1+j,1+j)}(x^4-jx^2+2i-k)= x^2 + 2x + 4 - 3j$.
 
\medskip

\begin{remark}
Consider $f,g \in\mathcal{R}$, $\lambda,a \in \mathbb{F}$ and suppose that $\sigma$ is an endomorphism (automorphism) of $\mathbb{F}$. Then the linearity of $\Delta_{{\bf a}}$ and the fact that for any $a\in\mathbb{F}$
$$\Delta^{1}_{a}(f\cdot g)= f \cdot \Delta^{1}_{a}g + \Delta^{1}_{a}(f\cdot g(a))\ \ \ (\Delta^{1}_{a,L}(f\cdot g)=\Delta^{1}_{a,L}f\cdot g + \Delta^{1}_{a,L}(f_L(a) \cdot g))$$ 
allow one to obtain recursive formulas for right (left) $(\sigma,\delta)$-derivative polynomials of order $r\in\mathbb{Z}_{\geq 1}$ via ${\bf a}=(a_1,\dots,a_r)\in\mathbb{F}^r$. For instance, given any $n\in\mathbb{Z}_{\geq 1}$, we get $\Delta^{1}_{a}x^{n} = x^{n-1} + \sum_{k=0}^{n-1} C_{k,n-1-k} (a) \cdot \Delta^{1}_{a}x^{n-1-k} \ \ \ \left(\Delta^{1}_{a,L}x^{n} = x^{n-1}+\Delta_{a,L}^1\left( \sum_{k=0}^{n-1}x^{n-1-k}(-1)^k\mathcal{T}_{k,n-1-k}(a)  \right)\right)$.
\end{remark}

\medskip

Finally, consider the next classical definition of right (left) multiplicity of roots.

\begin{definition} \label{Res2.1}
Consider $f\in\mathcal{R}$, $a\in \mathbb{F}$ and $r\in \mathbb{Z}_{\geq 1}$. If $\sigma$ is an endomorphism (automorphism), we say that $a$ is a \textit{right (left) root
of $f$ of multiplicity $\geq r$} if the skew polynomial $(x-a)^{r}$ divides $f$ on the right (left). Moreover, we say that $a$ is a right (left) root of $f$ of multiplicity $r$ if the skew polynomial $(x-a)^{r}$ is the maximum power of $x-a$ which divides $f$ on the right (left).
\end{definition}

\begin{example}
Let $\mathbb{F}_9[x;\sigma,0]$ with $\sigma(z):=z^3$ for all $z\in\mathbb{F}_9$. If $x=a\in\mathbb{F}_9$ is a right root of $g(x)\in\mathbb{F}_9[x;\sigma,0]$ of multiplicity $\geq 2$, then $\text{R}^{\sigma,\delta}_{\mathbb{F}_9}(g,\Delta_{a}^{1}g)=0$. On the other hand, consider $f(x)=(x+1)(x-1)\in \mathbb{F}_9[x;\sigma,0]$. Then $\Delta_{1}^1f(x)=x+1$ and $R^{\sigma,0}_{\mathbb{F}_9}(f,\Delta_1^1f)=0$, because we can write $f(x)=(x-1)(x+1)$, but $x=1$ is a right root of $f(x)$ of multiplicity one.
\end{example}

Keeping in mind all the previous definitions, we obtain the following result.

\begin{theorem}\label{Res2.2}
Consider $f\in\mathcal{R}$, $a\in \mathbb{F}$ and $r$ a positive integer such that $r<\deg f$. If $\sigma$ is an automorphism of $\mathbb{F}$, then the following are equivalent:
\begin{itemize}
\item[$1)$] $a$ is a right (left) root of $f$ of multiplicity $\geq r$; 
\item[$2)$] $a$ is a common right (left) root of $f,\Delta_{a}^{1}f,\dots,\Delta_{a}^{r-1}f \ \left(f,\Delta_{a,L}^{1}f,\dots,\Delta_{a,L}^{r-1}f\right)$;
\item[$3)$] $\text{R}^{\sigma,\delta}_{\mathbb{F}, L}(\Delta_{a}^{j}f,\Delta_{a}^{j+1}f)=0 \ \left(\text{R}^{\sigma,\delta}_{\mathbb{F}}(\Delta_{a,L}^{j}f,\Delta_{a,L}^{j+1}f)=0 \right)$ for $j=0,\dots,r-1$;
\item[$4)$] $gcld(\Delta_{a}^{j}f,\Delta_{a}^{j+1}f)\neq 1 \ \left(gcrd(\Delta_{a,L}^{j}f,\Delta_{a,L}^{j+1}f)\neq 1\right)$ for $j=0,\dots,r-1$,
\end{itemize}
where $\Delta_{a}^{0}f(a):=f(a) \ \left(\left(\Delta_{a,L}^{0}f\right)_L(a):=f_L(a)\right)$.
\end{theorem}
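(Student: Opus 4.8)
\emph{The plan.} I would prove $1)\Leftrightarrow 2)$, $3)\Leftrightarrow 4)$ and $2)\Leftrightarrow 4)$, carrying out only the ``right'' version in detail; the parenthetical ``left'' version is entirely symmetric, since $\sigma$ being an automorphism makes $\mathcal{R}$ a right Euclidean domain, and one simply replaces right division, $\Delta_a^{j}f$, $f(a)$, $\mathrm{gcld}$ and $\mathrm{R}^{\sigma,\delta}_{\mathbb{F},L}$ by $\Delta_{a,L}^{j}f$, $f_L(a)$, $\mathrm{gcrd}$ and $\mathrm{R}^{\sigma,\delta}_{\mathbb{F}}$, invoking Theorem~\ref{Res1.5} in place of Theorem~\ref{teorizq}. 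Throughout, the hypothesis $r<\deg f$ guarantees $\deg\Delta_a^{i}f=\deg f-i\ge \deg f-r\ge 1$ for $0\le i\le r$, so all derivative polynomials in play are non-constant.

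I would first record two elementary facts. \emph{Fact A:} $\Delta_a^{j+1}f=\Delta_a^1(\Delta_a^{j}f)$ for all $j\ge 0$, with the convention $\Delta_a^0 f:=f$. Indeed, from $f=\Delta_a^{j}f\cdot(x-a)^{j}+s_j$ with $\deg s_j<j$, and from $\Delta_a^{j}f=\Delta_a^1(\Delta_a^{j}f)\cdot(x-a)+(\Delta_a^{j}f)(a)$, substitution yields $f=\Delta_a^1(\Delta_a^{j}f)\cdot(x-a)^{j+1}+\big((\Delta_a^{j}f)(a)(x-a)^{j}+s_j\big)$, where the bracketed term has degree $\le j$; by uniqueness of right division by $(x-a)^{j+1}$ the quotient must be $\Delta_a^{j+1}f$. \emph{Fact B:} for $g\in\mathcal{R}$ and $a\in\mathbb{F}$ one has $\mathrm{gcld}(g,\Delta_a^1 g)\neq 1 \Rightarrow g(a)=0$, and conversely if $g(a)=0$ then $\Delta_a^1 g$ is a common left divisor of $g$ and $\Delta_a^1 g$. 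For the first implication, if $h$ is a common left divisor with $\deg h\ge 1$, write $g=hp$ and $\Delta_a^1 g=hq$; then $g=\Delta_a^1 g\cdot(x-a)+g(a)$ gives $g(a)=h\big(p-q(x-a)\big)$, and since $\mathcal{R}$ has no zero divisors while $\deg g(a)\le 0<\deg h$, necessarily $p-q(x-a)=0$, so $g(a)=0$. The converse is clear from $g=\Delta_a^1 g\cdot(x-a)$.

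For $1)\Leftrightarrow 2)$ I would induct on $r$, the case $r=1$ being the definition of a right zero. For the step, observe that $(x-a)^{r}$ divides $f$ on the right if and only if $f(a)=0$ and $(x-a)^{r-1}$ divides $\Delta_a^1 f$ on the right: if $f=q(x-a)^{r}$ then $f(a)=0$, and comparing with $f=\Delta_a^1 f\cdot(x-a)$ (uniqueness of right division by $x-a$) forces $\Delta_a^1 f=q(x-a)^{r-1}$; the converse is immediate. Applying the induction hypothesis to $\Delta_a^1 f$ and using $\Delta_a^{j}(\Delta_a^1 f)=\Delta_a^{j+1}f$ (Fact A), this condition reads exactly $(\Delta_a^{j}f)(a)=0$ for $j=0,\dots,r-1$, which is $2)$.

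Finally, $3)\Leftrightarrow 4)$ is Theorem~\ref{teorizq} applied termwise to the (non-constant) pairs $(\Delta_a^{j}f,\Delta_a^{j+1}f)$ for $j=0,\dots,r-1$, and $2)\Leftrightarrow 4)$ is Fact B with $g=\Delta_a^{j}f$ (so $\Delta_a^1 g=\Delta_a^{j+1}f$ by Fact A): the implication $\mathrm{gcld}(\Delta_a^{j}f,\Delta_a^{j+1}f)\neq 1\Rightarrow(\Delta_a^{j}f)(a)=0$ gives $4)\Rightarrow 2)$, and if $(\Delta_a^{j}f)(a)=0$ then $\Delta_a^{j+1}f$ is a common left divisor of degree $\deg f-(j+1)\ge 1$, giving $2)\Rightarrow 4)$. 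The only genuinely delicate point is the bookkeeping: a common \emph{right} root is detected by the \emph{left} resultant and $\mathrm{gcld}$, which is forced by the remainder identity $\Delta_a^{j}f=\Delta_a^{j+1}f\cdot(x-a)+(\Delta_a^{j}f)(a)$ placing $x-a$ on the right (hence the cofactor $\Delta_a^{j+1}f$ on the left); keeping this, and the degree inequalities from $r<\deg f$, straight is what makes the argument work.
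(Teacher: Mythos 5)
Your proposal is correct and follows essentially the same route as the paper's proof: the equivalence $1)\Leftrightarrow 2)$ via the recursive divisibility structure of the $\Delta_a^{j}f$ (your Fact A and induction play the role of the paper's identities $\Delta_a^{i}(g(x)(x-a)^t)=g(x)(x-a)^{t-i}$ and $(*)$), and $2)\Leftrightarrow 3)\Leftrightarrow 4)$ via the remainder identity $\Delta_a^{j}f=\Delta_a^{j+1}f\cdot(x-a)+\Delta_a^{j}f(a)$ combined with Theorems \ref{Res1.5} and \ref{teorizq} (your Fact B is exactly the argument the paper leaves implicit). The only difference is that you supply the details the paper merely sketches, including the correct degree bookkeeping from $r<\deg f$.
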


\begin{proof}
The equivalence between $1)$ and $2)$ follows from 
Definition \ref{Res2.1} and the equalities
$$\Delta_{a}^{i}\left(g(x)(x-a)^t\right)=g(x)(x-a)^{t-i} \ \ \ (\Delta_{a,L}^{i}\left((x-a)^tg(x)\right)=(x-a)^{t-i}g(x))\ ,$$
$$(*)\ \ \Delta_{a}^{i}f(x)=\Delta_{a}^{i+1}f(x)(x-a)+\Delta_{a}^{i}f(a) \ \ \ (\Delta_{a,L}^{i}f(x)=(x-a)\Delta_{a,L}^{i+1}f(x)+\left(\Delta_{a,L}^{i}f\right)_L(a)) \ ,$$
for $i=0,\dots,r-1$, where $\Delta_a^0f(x)=f(x) \ \left( \Delta_{a,L}^0f(x)=f(x) \right)$, while the equivalences $2) \Leftrightarrow 3) \Leftrightarrow 4)$ follow from Theorems \ref{Res1.5} and \ref{teorizq}, and the fact that for every $j=0,\dots,r-1$, we have $\Delta_{a}^{j}f(a)=0\ (\Delta_{a,L}^{j}f(a)=0) \iff \text{R}^{\sigma,\delta}_{\mathbb{F}, L}(\Delta_{a}^{j}f,\Delta_{a}^{j+1}f)=0 \ (\text{R}^{\sigma,\delta}_{\mathbb{F}}(\Delta_{a,L}^{j}f,\Delta_{a,L}^{j+1}f)=0)$ by the equations $(*)$.
\end{proof}
 
Recently, in \cite{zerosmartinez} the author proposed a definition of multiplicity distinct from the previous one, but which coincide in the commutative case (that is, when $\mathbb{F}$ is a field, $\sigma=Id$ and $\delta=0$).

\begin{definition}\label{mult.martinez}
Let $\sigma$ be an endomorphism (automorphism) of $\mathbb{F}$. For $r\in \mathbb{Z}_{>0}$, we say that a sequence ${\bf a} = (a_1,a_2,...,a_r)\in \mathbb{F}^r$ is a \textit{right (left) $(\sigma,\delta)$-multiplicity sequence} if $a_1$ is the only right (left) root of the skew polynomial $P_{{\bf a}}$ ($P_{{\bf a},L}$). Moreover, given $f\in\mathcal{R}$, $r\in \mathbb{Z}_{>0}$ and a right (left) $(\sigma,\delta)$-multiplicity sequence ${\bf a}=(a_1,...,a_r)\in \mathbb{F}^{r}$ as before, we say that $a_1$ is a \textit{right (left) zero of $f$ of multiplicity $r$ via ${\bf a}$} if the skew
polynomial $P_{{\bf a}}$ ($P_{{\bf a},L}$) divides $f$ on the right (left).
\end{definition}

Finally, with this new notion of multiplicity, we get also the next result.

\begin{theorem}\label{thm-multiplicity}
Let $\sigma$ be an automorphism of $\mathbb{F}$. Consider $f\in \mathcal{R}$, $a\in \mathbb{F}$, $r$ a positive integer such that $r<\deg f$ and ${\bf a}=(a_1,...,a_r)\in \mathbb{F}^{r}$ a right (left) $(\sigma,\delta)$-multiplicity sequence. Then the following are equivalent:
\begin{itemize}
\item[$1)$] $a_1$ is a \textit{right (left) root of $f$ of multiplicity $r$ via ${\bf a}$}; 
\item[$2)$] $\Delta_{{\bf{a_i}}}f(a_{i+1})=0 \ \left(\left(\Delta_{{\bf{a_i}},L}f\right)_L(a_{i+1})=0 \right)$ for all $i=0,1,...,r-1$, where ${\bf{a_j}}=(a_1,\dots,a_{j})$ for $j=1,2,...,r-1$, $\Delta_{{\bf{a_0}}}f(a_1):=f(a_1) \ \left(\left(\Delta_{{\bf{a_0}},L}f\right)_L(a_1):=f_L(a_1)\right)$; 
\item[$3)$] $\text{R}^{\sigma,\delta}_{\mathbb{F}, L}(\Delta_{{\bf{a_i}}}f,\Delta_{{\bf{a_{i+1}}}}f)=0 \ \left(\text{R}^{\sigma,\delta}_{\mathbb{F}}(\Delta_{{\bf{a_i}},L}f,\Delta_{{\bf{a_{i+1}}},L}f)=0\right)$ for all $i=0,1,...,r-1$ ;
\item[$4)$] $gcld(\Delta_{{\bf{a_i}}}f,\Delta_{{\bf{a_{i+1}}}}f)\neq 1 \ \left(gcrd(\Delta_{{\bf{a_i}},L}f,\Delta_{{\bf{a_{i+1}}},L}f)\neq 1\right)$ for all $i=0,1,...,r-1$.
\end{itemize}
\end{theorem}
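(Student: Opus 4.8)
The plan is to reduce everything to a single recursion relating consecutive derivative polynomials, in the same spirit as the relation $(*)$ used in the proof of Theorem~\ref{Res2.2}, and then to invoke Theorems~\ref{Res1.5} and~\ref{teorizq} for the steps that mention resultants. First I would record the \emph{key recursion}
$$(\star)\qquad \Delta_{{\bf a_i}}f(x)=\Delta_{{\bf a_{i+1}}}f(x)(x-a_{i+1})+\Delta_{{\bf a_i}}f(a_{i+1})\ ,\qquad i=0,\dots,r-1\ ,$$
where $\Delta_{{\bf a_0}}f:=f$ and ${\bf a_{i+1}}=(a_1,\dots,a_{i+1})$. This follows from Definition~\ref{Hasse.martinez}: dividing $\Delta_{{\bf a_i}}f$ on the right by $x-a_{i+1}$ gives $\Delta_{{\bf a_i}}f=q\cdot(x-a_{i+1})+\Delta_{{\bf a_i}}f(a_{i+1})$ with $\Delta_{{\bf a_i}}f(a_{i+1})\in\mathbb{F}$; substituting into $f=\Delta_{{\bf a_i}}f\cdot P_{{\bf a_i}}+\rho_i$ (with $\deg\rho_i<i$ or $\rho_i=0$) and using $P_{{\bf a_{i+1}}}=(x-a_{i+1})P_{{\bf a_i}}$ yields $f=q\cdot P_{{\bf a_{i+1}}}+\big(\Delta_{{\bf a_i}}f(a_{i+1})P_{{\bf a_i}}+\rho_i\big)$, whose last summand has degree $\le i<\deg P_{{\bf a_{i+1}}}$, so uniqueness of right division forces $q=\Delta_{{\bf a_{i+1}}}f$. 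By Remark~\ref{remark4.3.9} the constant $\Delta_{{\bf a_i}}f(a_{i+1})$ is the right Hasse derivative $D_{{\bf a_{i+1}}}(f)$. I would also note that, since $r<\deg f$, we have $\deg\Delta_{{\bf a_{i+1}}}f=\deg f-(i+1)>0$ for $0\le i\le r-1$, so all derivative polynomials appearing below are non-constant, hence non-units of $\mathcal{R}$.

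For $1)\Leftrightarrow 2)$ I would argue by induction on $r$. By Definition~\ref{mult.martinez}, $1)$ asserts that $P_{{\bf a}}=P_{{\bf a_r}}=(x-a_r)P_{{\bf a_{r-1}}}$ divides $f$ on the right. Since $\mathcal{R}$ is a domain on which $\deg$ is additive, $P_{{\bf a_{r-1}}}$ is a right factor of $P_{{\bf a_r}}$, so $P_{{\bf a_r}}$ right-divides $f$ only if $P_{{\bf a_{r-1}}}$ does, i.e.\ $f=\Delta_{{\bf a_{r-1}}}f\cdot P_{{\bf a_{r-1}}}$; cancelling $P_{{\bf a_{r-1}}}$ on the right then shows that $P_{{\bf a_r}}$ right-divides $f$ if and only if $x-a_r$ right-divides $\Delta_{{\bf a_{r-1}}}f$, i.e.\ $\Delta_{{\bf a_{r-1}}}f(a_r)=0$. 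Combining with the inductive hypothesis for $P_{{\bf a_{r-1}}}$ (base case $r=1$: $x-a_1$ right-divides $f$ iff $f(a_1)=\Delta_{{\bf a_0}}f(a_1)=0$) gives the equivalence of $1)$ with $\Delta_{{\bf a_i}}f(a_{i+1})=0$ for all $i=0,\dots,r-1$.

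For $2)\Leftrightarrow 3)\Leftrightarrow 4)$, fix $i$. If $\Delta_{{\bf a_i}}f(a_{i+1})=0$, then $(\star)$ gives $\Delta_{{\bf a_i}}f=\Delta_{{\bf a_{i+1}}}f\cdot(x-a_{i+1})$, so the non-constant $\Delta_{{\bf a_{i+1}}}f$ is a common (non-unit) left factor of $\Delta_{{\bf a_i}}f$ and $\Delta_{{\bf a_{i+1}}}f$; hence $gcld\neq 1$ and, by Theorem~\ref{teorizq}, $\text{R}^{\sigma,\delta}_{\mathbb{F},L}(\Delta_{{\bf a_i}}f,\Delta_{{\bf a_{i+1}}}f)=0$. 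Conversely, if $c:=\Delta_{{\bf a_i}}f(a_{i+1})\neq 0$ and $h$ were a common non-unit left factor, say $\Delta_{{\bf a_i}}f=hu$ and $\Delta_{{\bf a_{i+1}}}f=hv$, then $(\star)$ would give $h\big(u-v(x-a_{i+1})\big)=c$, which is impossible since $\deg h\ge 1$ while $\deg c=0$; so $gcld=1$ and $\text{R}^{\sigma,\delta}_{\mathbb{F},L}\neq 0$. Thus, for each $i$, the three conditions in $2)$, $3)$, $4)$ coincide (the equivalence $3)\Leftrightarrow 4)$ being immediate from Theorem~\ref{teorizq}), and running over $i=0,\dots,r-1$ completes the chain.

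The parenthetical ``left'' statements are handled symmetrically: replace right division, right evaluation and $\Delta_{{\bf a}}$ by their left counterparts (here one uses that $\sigma$ is an automorphism so that left division is available, cf.\ Lemma~\ref{Res3.2}), replace Theorem~\ref{teorizq} by Theorem~\ref{Res1.5}, and use the left Hasse derivative of Remark~\ref{remark4.3.9}; the roles of $gcld$/$gcrd$ and of $\text{R}^{\sigma,\delta}_{\mathbb{F},L}$/$\text{R}^{\sigma,\delta}_{\mathbb{F}}$ swap accordingly. The only genuine work is establishing $(\star)$ and making the base and inductive step of $1)\Leftrightarrow 2)$ airtight; once $(\star)$ is available, the remaining implications are short formal arguments, so I expect no serious obstacle beyond this bookkeeping.
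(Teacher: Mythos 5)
Your proposal is correct and follows essentially the same route as the paper: the recursion $(\star)$ is exactly the identity $\Delta_{{\bf a_i}}f(x)=\Delta_{{\bf a_{i+1}}}f(x)(x-a_{i+1})+\Delta_{{\bf a_i}}f(a_{i+1})$ that the paper uses to deduce $2)\Leftrightarrow 3)\Leftrightarrow 4)$ from Theorems~\ref{teorizq} and~\ref{Res1.5}, and your degree count $\deg\Delta_{{\bf a_{i+1}}}f=\deg f-(i+1)>0$ correctly justifies applying those theorems. The only divergence is that you prove $1)\Leftrightarrow 2)$ by a self-contained induction on $r$ (peeling off one factor $x-a_{i+1}$ of $P_{{\bf a}}$ at a time and cancelling in the domain $\mathcal{R}$), whereas the paper simply cites \cite[Proposition 45]{zerosmartinez} together with Remark~\ref{remark4.3.9}; your direct argument is valid and makes the proof independent of that reference.
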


\begin{proof}
The equivalence between $1)$ and $2)$ follows from \cite[Proposition 45]{zerosmartinez} (a left-hand version of \cite[Proposition 45]{zerosmartinez} with suitable modifications) and Remark \ref{remark4.3.9}, while the equivalences $2) \Leftrightarrow 3) \Leftrightarrow 4)$ follow from Theorem \ref{teorizq} (Theorem \ref{Res1.5}) and the fact that for every $i=0,\dots,r-1$, we have 
$\Delta_{{\bf{a_i}}}f(a_{i+1})=0\ \left(\left(\Delta_{{\bf{a_i}},L}f\right)_L(a_{i+1})=0\right) \iff \text{R}^{\sigma,\delta}_{\mathbb{F}, L}(\Delta_{{\bf{a_i}}}f,\Delta_{{\bf{a_{i+1}}}}f)=0 \ \left(\text{R}^{\sigma,\delta}_{\mathbb{F}}(\Delta_{{\bf{a_i}},L}f,\Delta_{{\bf{a_{i+1}}},L}f)=0\right)$ because $\Delta_{{\bf{a_i}}}f(x)=\Delta_{{\bf{a_{i+1}}}}f(x)(x-a_{i+1})+\Delta_{{\bf{a_i}}}f(a_{i+1}) \ \ \left(\Delta_{{\bf{a_i}},L}f(x)=(x-a_{i+1})\Delta_{{\bf{a_{i+1}}},L}f(x)+\left(\Delta_{{\bf{a_i}},L}f\right)_L(a_{i+1})\right)$.
\end{proof}

\vspace{0.5cm}

\noindent {\bf Acknowledgement.} This work 
was partially supported by the Proyecto VRID N. 219.015.023-INV of the University of Concepci\'on.

\bigskip

\bigskip


\begin{thebibliography}{10}

\bibitem{artin}
E. Artin, \textit{Geometric algebra}, Interscience Publishers, Inc., New York-London, 1957.

\bibitem{phdtesis}
T. Baumbaugh, \textit{Results on Common Left/Right Divisors of Skew Polynomials}, PhD thesis, Clemson University, 2016.

\bibitem{Magma}
W. Bosma, J. Cannon, C. Playoust, \textit{The Magma algebra system. I. The user language}, J. Symbolic Comput. {\bf 24} (1997), no. 3-4, 235--265.

\bibitem{Br}
J.L. Brenner, \textit{Applications of the Dieudonn\'e determinant}, Linear Algebra Appl. {\bf 1} (1968), 511--536.

\bibitem{rank}
N. Buaphim, K. Onsaard , P. So-ngoen, and T. Rungratgasame, \textit{Some reviews on ranks of upper triangular block matrices over a skew field}, International Mathematical Forum (2018), vol. 13, no. 7, pp. 323--335.

\bibitem{cohnfree}
P.M. Cohn, \textit{Free Rings and Their Relations}, London Math. Soc. Monographs, No. 2. Academic Press, London-New York,
1971.

\bibitem{cohnskew}
P.M. Cohn, \textit{Skew fields. Theory of general division rings}, Encyclopedia of Mathematics and its Applications {\bf 57}, Cambridge University Press, Cambridge, 1995

\bibitem{dieudone}
J. Dieudonn\'e, \textit{Les d\'eterminants sur un corps non commutatif}, Bull. Soc. Math. France {\bf 71} (1943), 27--45.

\bibitem{draxl} 
P.K. Draxl, \textit{Skew fields}, London Mathematical Society Lecture Note Series {\bf 81}, Cambridge University Press, Cambridge, 1983.

\bibitem{draxl2}
P. Draxl, M. Kneser, \textit{$SK_1$ von Schiefk\"orpern}, Seminar held at Bielefeld and G\"{o}ttingen, 1976, Lecture Notes in Mathematics {\bf 778}, Springer, Berlin, 1980.

\bibitem{DerEric}
A. Lj. Eri\'c, \textit{Polynomial interpolation problem for skew polynomials}, Appl. Anal. Disc. Math. {\bf 1} (2007), 403--414.

\bibitem{ResEric}
A. Lj. Eri\'c, \textit{The resultant of non-commutative polynomials}, Mat. Vesnik {\bf 60} (2008), no. 1, 3--8.

\bibitem{resultants} 
I.M. Gelfand, M.M. Kapranov and A.V. Zelevinsky, \textit{Discriminants, resultants, and multidimensional determinants}, Birkh\"auser Boston, Inc., Boston, 1994.
 
\bibitem{herstein}
I.N. Herstein, \textit{Wedderburn's theorem and a theorem of Jacobson}, Amer. Math. Monthly {\bf 68} (1961), 249--251.

\bibitem{laidacker}
M.A. Laidacker, \textit{Another theorem relating Sylvester's matrix and the greatest common divisor}, Math. Mag. {\bf 42} (1969), 126--128.

\bibitem{lambook}
T.Y. Lam, \textit{A first course in noncommutative rings}, Graduate Texts in Mathematics {\bf 131}, Springer-Verlag, New York, 1991.

\bibitem{lamandleroy}
T.Y. Lam, A. Leroy, \textit{Vandermonde and Wronskian matrices over division rings}, J. Algebra {\bf 119} (1988), no. 2, 308--336.

\bibitem{Alglang} 
S. Lang, \textit{Algebra. Revised third edition}, Graduate Texts in Mathematics {\bf 211}, Springer-Verlag, New York, 2002.

\bibitem{reedsalomoncodes}
U. Mart\'inez-Pe\~{n}as, \textit{Skew and linearized Reed-Solomon codes and maximum sum rank distance codes over any division ring}, J. Algebra {\bf 504} (2018), 587--612.

\bibitem{zerosmartinez}
U. Mart\'inez-Pe\~{n}as, \textit{Zeros with multiplicity, Hasse derivatives and linear factors of general skew polynomials}, ArXiv preprint arXiv: 2103.07239 (2021).

\bibitem{Ore}
O. Ore, \textit{Theory of non-commutative polynomials}, Ann. of Math. (2) {\bf 34} (1933), no. 3, 480--508.

\bibitem{closureH}
S. Pumpl\"un, \textit{Factoring skew polynomials over Hamilton's quaternion algebra and the complex numbers}, J. Algebra {\bf 427} (2015), 20--29.

\bibitem{ModernAlg}
B.L. Van der Waerden, \textit{Modern Algebra, Vol. I}, Translated from the second revised German edition by Fred Blum. With revisions and additions by the author. Frederick Ungar Publishing Co., New York, N. Y., 1949.

\bibitem{commutatorQ}
R. Wilson, J. Gray, \textit{Mathematical conversations: selections from The Mathematical Intelligencer}, Springer, 2001.

\bibitem{xianguiresultants}
X. Zhao, Y. Zhang, \textit{Resultants of quaternion polynomials}, Hacet. J. Math. Stat. {\bf 48} (2019), no. 5, 1304--1311.

\end{thebibliography}
\end{document}